\newcommand*{\mailto}[1]{\href{mailto:#1}{\nolinkurl{#1}}}
\numberwithin{equation}{section}
\newtheorem{example}{Example}[section]
\newtheorem{theorem}[example]{Theorem}
 \newtheorem{proposition}[example]{Proposition}
\newtheorem{lemma}[example]{Lemma}
 \newtheorem{corollary}[example]{Corollary}
\newtheorem{remark}[example]{Remark}
\newtheorem*{maintheorem*}{Main Theorem}
\numberwithin{equation}{section}
\renewcommand{\i}{\ifmmode\mathit{\mathchar"7010 }\else\char"10 \fi}
\renewcommand{\j}{\ifmmode\mathit{\mathchar"7011 }\else\char"11 \fi}
\newcommand{\R}{\mathbb{R}}
\newcommand{\N}{\mathbb{N}}
\DeclareMathOperator*{\sign}{sign}
\newcommand{\sgn}[1]{\sign\left(#1\right)}
\newcommand{\px}{\partial_x}
\newcommand{\pt}{\partial_t}
\newcommand{\pxi}{\partial_\xi}
\newcommand{\peta}{\partial_\eta}
\newcommand{\ve}{\varepsilon}
\begin{document}

\title[Well-posedness of the two-component peakon system]
{On the well-posedness of the Cauchy problem for 
the two-component peakon system in $C^k\cap W^{k,1}$}

 \author[Karlsen]{K. H. Karlsen}
\author[Rybalko]{Ya. Rybalko}

\address[Kenneth Hvistendahl Karlsen]{
	Department of Mathematics, University of Oslo, 
	NO-0316 Oslo, Norway}
\email[]{kennethk@math.uio.no}

\address[Yan Rybalko]{
	Mathematical Division, 
	B.Verkin Institute for Low Temperature Physics and Engineering
	of the National Academy of Sciences of Ukraine, 
	47 Nauky Ave., Kharkiv, 61103, Ukraine}
\email[]{rybalkoyan@gmail.com}

\subjclass[2020]{Primary: 35G25, 35B30; Secondary: 35B44, 35Q53, 37K10}

\keywords{FORQ equation, two-component peakon equation,  nonlocal 
(Alice-Bob) integrable system, cubic nonlinearity, local well-posedness, blow up criteria}

\thanks{Yan Rybalko gratefully acknowledges the 
partial support from the Akhiezer Foundation}

\date{\today}

\begin{abstract}
This study focuses on the Cauchy problem associated with the 
two-component peakon system featuring a cubic nonlinearity, 
constrained to the class $(m,n)\in C^{k}(\mathbb{R})
\cap W^{k,1}(\mathbb{R})$ 
with $k\in\mathbb{N}\cup\{0\}$.
This system extends the celebrated Fokas-Olver-Rosenau-Qiao 
equation, and the following nonlocal (two-place) counterpart 
proposed by Lou and Qiao:
$$
\partial_t m(t,x)=
\partial_x[m(t,x)(u(t,x)-\partial_xu(t,x))
(u(-t,-x)+\partial_x(u(-t,-x)))],
$$
where $m(t,x)=\left(1-\partial_{x}^2\right)u(t,x)$. Employing 
an approach based on Lagrangian coordinates, we establish 
the local existence, uniqueness, and Lipschitz continuity of 
the data-to-solution map in the class $C^k\cap W^{k,1}$. 
Moreover, we derive criteria for blow-up of 
the local solution in this class.
\end{abstract}

\maketitle

\tableofcontents

\section{Introduction}
We consider the Cauchy problem for the following 
two-component peakon system with cubic nonlinearity 
introduced by Song, Qu and Qiao in \cite{SQQ11}:
\begin{equation}\label{two-comp}
	\begin{split}
		& \partial_t m=\partial_x[m(u-\partial_xu)(v+\partial_xv)],
		\\
		& \partial_tn=\partial_x[n(u-\partial_xu)(v+\partial_xv)],
		\\
		& m=u-\partial_{x}^2u, \quad n=v-\partial_{x}^2v,
	\end{split}
\end{equation}
for $m=m(t,x),\,u=u(t,x),\,v=v(t,x)$ and $t,x\in \R$. 
We assume that the initial data $u_0(x)=u(0,x)$ and 
$v_0(x)=v(0,x)$ belong to the space 
$C^{k+2}(\mathbb{R})\cap W^{k+2,1}(\mathbb{R})$ with
$k\in\mathbb{N}_0$, where $\mathbb{N}_0=\mathbb{N}\cup\{0\}$.
Taking $u=v$ in \eqref{two-comp}, one obtains 
the Fokas-Olver-Rosenau-Qiao (FORQ) equation,
also referred to as the modified Camassa-Holm equation,
which has the form
\begin{equation}
\begin{split}\label{FORQ}
&\pt m=
\px\left[m\left(u^2-(\px u)^2\right)\right],
\quad m=u-\px^2u.
\end{split}
\end{equation}
Fokas and Fuchssteiner originally introduced this equation 
as an integrable variant of the modified Korteweg-de Vries (mKdV) 
equation, known to possess peakon solutions 
(see \cite[Equation (7)]{F95} and \cite[Equation (26f)]{F81}). 
Subsequently, leveraging its bi-Hamiltonian structure, Olver and Rosenau, 
along with Schiff, derived \eqref{FORQ} as a dual 
counterpart to the mKdV equation, see \cite{OR96} and \cite{Sch96}.
Later, Qiao \cite{Q06} further advanced the development of the FORQ 
equation as an approximation of the two-dimensional Euler equations, 
where $u$ represents the fluid velocity, and $m$ corresponds 
to its potential density. Additionally, \eqref{FORQ} can be 
reduced to the short pulse (SP) equation,
$$
\px\pt u+u+\frac{1}{6}\px^2(u^3)=0,
$$
by the scaling transformation $x'=\frac{x}{\ve}$, 
$t'=\ve t$, $u'=\frac{u}{\ve^2}$ and passing to 
the limit $\ve\to0$ \cite{GLOQ13}.
The SP equation was proposed by Sch\"afer 
and Wayne \cite{SW04} and it is useful for modeling the propagation of
ultra-short light pulses in silica optics.
Lastly, it is worth noting that the FORQ equation can be found 
in the list of equations compiled by Novikov, taking the form 
$(1-\px^2)\pt u=F(u,\px u,\px^2,\dots)$, where $F$ represents 
a quadratic or cubic differential polynomial in $u$ and 
its derivatives with respect to $x$, see \cite[Equation (32)]{N09}.

The FORQ equation, along with its generalizations, has been 
the subject of extensive research, exploring its well-posedness and 
blow-up properties in several works 
\cite{CGLQ16, FGLQ13, GL18, GLOQ13, HM14, Y20, YC24, ZGX22, Zh16}. 
In particular, the geometric formulation of \eqref{FORQ} can 
be found in \cite[Section 2]{GLOQ13}. Additionally, a wide range of 
exact solutions for the FORQ equation, including algebro-geometric, 
peakon, smooth, and loop-shaped solutions, have been derived 
and discussed in various studies \cite{AR19, BKS20, HFQ17, M14, Q06}. 
Moreover, the inverse scattering method and the long-term 
behavior of solutions to the Cauchy problem associated 
with \eqref{FORQ} have been explored in \cite{BKS20, K22}.

Another intriguing reduction of \eqref{two-comp} is the 
nonlocal (two-place) FORQ equation, originally introduced 
by Lou and Qiao \cite[equation (26)]{LQ17}:
\begin{equation}\label{nFORQ-eq}
\begin{split}
&\partial_t m(t,x)=
\partial_x[m(t,x)(u(t,x)-\partial_xu(t,x))
(u(-t,-x)+\partial_x(u(-t,-x)))],\\
&m(t,x)=u(t,x)-\partial_{x}^2u(t,x),
\end{split}
\end{equation}
Equation \eqref{nFORQ-eq} can be linked to \eqref{two-comp} by 
setting $v(t,x)=u(-t,-x)$, thereby allowing us to derive it following 
a methodology akin to that employed by Ablowitz and Musslimani in 
introducing various nonlocal variations of well-known 
integrable equations \cite{AM13,AM17}. Specifically, this approach can 
be applied to obtain the nonlocal counterpart of the 
nonlinear Schr\"odinger (NLS) equation
\begin{equation}\label{NLS}
	\mathrm{i}\partial_tq(t,x)+\partial_{x}^2q(t,x)
	+2\sigma |q(t,x)|^{2}q(t,x)=0,\quad 
	\mathrm{i}^2=-1,\,\,\sigma=\pm1.
\end{equation}
The works \cite{AM13,AM17} considered the following integrable
Ablowitz-Kaup-Newell-Segur (AKNS) system:
\begin{equation*}
\begin{aligned}
& \mathrm{i}\pt q(t,x)+\px^2q(t,x)+2q^2(t,x)r(t,x)=0,\\
-&\mathrm{i}\pt r(t,x)+\px^2r(t,x)+2r^2(t,x)q(t,x)=0,
\end{aligned}
\end{equation*}
which, in the case $r(t,x)=\sigma \overline{q(t,-x)}$, 
reduces to the nonlocal NLS equation
\begin{equation}\label{NNLS}
	\mathrm{i}\partial_{t}q(t,x)+\partial_{x}^2q(t,x)
	+2\sigma q^{2}(t,x)\overline{q(t,-x)}=0.
\end{equation}
The conventional NLS \eqref{NLS} corresponds 
to $r(t,x)=\sigma \overline{q(t,x)}$. It is worth noting 
that \eqref{NNLS} exhibits nonlocal 
behavior exclusively in the spatial variable $x$.

Returning to the nonlocal FORQ equation \eqref{nFORQ-eq}, we 
observe that it incorporates solution values from non-adjacent points, such 
as $x$ and $-x$. This unique feature allows for the description 
of phenomena characterized by intrinsic correlations 
and entanglement between events taking place 
at distinct locations \cite{LH17}. The 
nonlocal FORQ equation \eqref{nFORQ-eq} was initially derived as a 
reduction of the following system, which was introduced 
by Xia, Qiao, and Zhou (see \cite[Equation (7)]{XQZ15}):
\begin{align*}
\nonumber
\label{two-comp-H}
&\partial_tm=\px(mH)+mH-\frac{1}{2}m(u-\partial_xu)
(v+\partial_xv),
\\
&\partial_tn=\px(nH)-nH+\frac{1}{2}[n(u-\partial_xu)
(v+\partial_xv)],
\\
\nonumber
&m=u-\partial_{x}^2u,\,\,n=v-\partial_{x}^2v,
\end{align*}
with $v(t,x)=u(-t,-x)$ and 
$H(t,x)=2(u(t,x)-\px u(t,x))
(u(-t,-x)+\px (u(-t,-x)))$. This system satisfies 
the parity-time-symmetric (PT-symmetric) 
condition, i.e., $H(t,x)=H(-t,-x)$.

The Cauchy problem for the system \eqref{two-comp} can 
be written in the following nonlocal 
form (see \cite[equations (4.1a), (4.1c)]{KR24} and \cite{MM13}):
\begin{subequations}\label{C-nonl-two-comp}
\begin{equation}
\label{nonl-two-comp}
\begin{split}
&\pt u =(1-\px^2)^{-1}\px
[m(u-\partial_xu)
(v+\partial_xv)]\\
&\quad\,\,\,=-\frac{1}{3}(\px u)^2\px v
+\frac{1}{3}(2u(\px u)v+u^2\px v)
+F(u,\px u,v,\px v),\\
&\pt v =(1-\px^2)^{-1}\px
[n(u-\partial_xu)
(v+\partial_xv)]\\
&\quad\,\,\,=-\frac{1}{3}(\px u)(\px v)^2
+\frac{1}{3}(2uv\px v+(\px u)v^2)
+\hat F(u,\px u,v,\px v),
\end{split}
\end{equation}
with initial data
\begin{equation}
u(0,x)=u_0(x),\quad v(0,x)=v_0(x),
\end{equation}
\end{subequations}
where
\begin{equation}
\label{nonl-terms}
\begin{split}
&F(u,w,v,z)=(1-\px^2)^{-1}\left(
\frac{1}{3}w^2z+
\left\{uw\partial_xz-w(\partial_xw)v
+\frac{1}{3}u(u\partial_x^2z-(\partial_x^2w)v)
\right\}
\right)\\
&\qquad\qquad\qquad\,\,+(1-\px^2)^{-1}\partial_x\left(
\frac{2}{3}u^2v+w^2v+B(u,w,v,z)
\right),\\
&\hat F(u,w,v,z)=(1-\px^2)^{-1}\left(
\frac{1}{3}wz^2-
\left\{uz\partial_xz-(\partial_xw)vz
+\frac{1}{3}v(u\partial_x^2z-(\partial_x^2w)v)
\right\}
\right)\\
&\qquad\qquad\qquad\,\,+(1-\px^2)^{-1}\partial_x\left(
\frac{2}{3}uv^2+uz^2+\hat B(u,w,v,z)
\right),
\end{split}
\end{equation}
with
\begin{align*}
	&B(u,w,v,z)=-u(\partial_xw)z+w(\partial_xw)v-uwv+u^2z
	+\frac{1}{3}(w(\partial_xw)z-w^2\partial_xz),\\
	&\hat B(u,w,v,z)=
	wv\partial_xz-uz\partial_xz+uvz-wv^2
	+\frac{1}{3}(wz\partial_xz-(\partial_xw)z^2).
\end{align*}
The Cauchy problem's local well-posedness within a range of 
Besov spaces and its associated blow-up criteria have been 
thoroughly examined in previous studies, see \cite{MM13} 
and \cite{YQZ15} respectively. For additional 
references, see \cite{WY23} and \cite{WZ22}.
Moreover, exact solutions of \eqref{nonl-two-comp}, including 
those involving multipeakons, have been successfully 
derived and investigated in \cite{YQZ15}.
The work  \cite{CHS16} explores the spectral aspects of 
the two-component system, particularly in the context of multipeakons.
Lastly, we mention that the Hamiltonian duality 
between \eqref{two-comp} and other integrable systems has been 
rigorously established in \cite{TL13} and \cite{KR24}.

In our current work, we focus on the study of the Cauchy 
problem \eqref{C-nonl-two-comp} within the class of 
functions $u$ and $v$ belonging to $C([-T,T],C^{k+2}(\R)
\cap W^{k+2,1}(\R))$, where $k\in\N_0$. 
Our approach involves revisiting the method of characteristics, 
as previously developed in \cite{GL18} and \cite{ZGX22}, 
primarily for addressing the FORQ equation.
This method has been successfully applied to various peakon equations, 
enabling one to obtain global solutions that may exhibit finite-time 
singularities, see \cite{HR07,BC07}. Moreover, it has proven valuable in 
the analysis of problems featuring non-zero asymmetric 
asymptotics for $u(t,x)$ as $x$ approaches both positive 
and negative infinity, as demonstrated in \cite{GHR12}.
It is important to note that the Cauchy problem for the nonlocal 
FORQ equation \eqref{nFORQ-eq} can exhibit distinct qualitative properties 
when the asymptotic behavior at positive and negative infinity differs 
significantly, resembling step-like patterns. This distinctive 
behavior arises from the non-translation invariance of \eqref{nFORQ-eq}, in 
contrast to the conventional FORQ equation \eqref{FORQ}.  
We refer to \cite{RS21} for a related discussion for the nonlocal 
NLS equation \eqref{NNLS} with step-like boundary conditions.

By utilizing the explicit representation of the solution $(u,v)$, 
derived from the known initial data and the characteristics, as 
shown in \eqref{uv-repr} below, we establish the existence 
and uniqueness of local solutions within the space $C^{k+2}\cap W^{k+2,1}$.
We note that our chosen class of regularity, specifically for 
the case where $k=0$, exhibits a lower regularity exponent 
than what has been previously explored in works related to the FORQ 
equation and its generalizations, as exemplified in 
\cite{GLOQ13} and \cite{ZGX22}. 
One of the most challenging aspects of our analysis lies in proving 
uniqueness when $k=0$. To achieve this, we must demonstrate 
that every solution adheres to a specific conservation law, as 
outlined in \eqref{cons-uv} below. This entails examining 
equation \eqref{two-comp} in a weak sense, a task that is 
further detailed in Lemma \ref{cl} below.

Next, we establish the Lipschitz continuity property of 
the data-to-solution map for $(m,n)$ within the space $C^k\cap W^{k,1}$, 
where $k\in\N_0$. To the best of our knowledge, this particular 
result has not been previously documented. In related works, such 
as \cite[Section 4.1]{GL18} and \cite[Section 4.1]{Zh16}, it was 
demonstrated that, assuming the existence of a weak solution 
for the FORQ equation in $W^{2,1}$, the solution itself exhibits 
a Lipschitz property within the space $W^{1,1}$. 
For solutions of the FORQ equation residing in $H^s$ with 
$s>\frac{5}{2}$, the data-to-solution map maintains continuity 
but does not possess uniform continuity, as discussed in \cite{HM14}. 
For further insights into the continuity properties of the FORQ equation 
and the two-component system \eqref{two-comp} within the context of 
$H^s$ spaces, we refer to \cite{HM14-1} and \cite{KR24}. 

Finally, we establish new blow-up criteria for the solution 
within the space $C^{k+2}\cap W^{k+2,1}$, 
where $k\in\N_0$. This extends and generalizes previous 
findings presented in \cite{GL18}, which were primarily 
centered on compactly supported classical solutions 
of \eqref{two-comp} with $k\in\N$.

The structure of this article unfolds as follows. In Section \ref{Prelim}, we lay the 
foundation by introducing essential notations, definitions, and 
relevant facts that will serve as the basis for our 
subsequent discussions. Section \ref{Loc} is dedicated to the 
development of the Lagrangian approach for addressing the 
Cauchy problem \eqref{C-nonl-two-comp}. Within this section, we 
leverage Lagrangian coordinates to establish the local existence, 
uniqueness, and Lipschitz continuity of the data-to-solution map. 
A summary of these results is provided in Theorem \ref{lwp}.
Finally, Section \ref{blc} focuses on the task of establishing 
blow-up criteria for the solution in $C^{k+2}\cap W^{k+2,1}$, 
where $k\in\N_0$.

\section{Preliminaries}\label{Prelim}
In this section we introduce some notations 
and facts to be used throughout the paper.
We use the following functional spaces:
\begin{equation}
\nonumber
\begin{split}
&C^{k}(\mathbb{R})=
\left\{f(x):\mathbb{R}\mapsto\mathbb{R}\, \Bigl|\Bigr.\,
f \mbox{ continuous and }
\|f\|_{C^k(\mathbb{R})}\equiv
\sum\limits_{i=0}^{k}
\|\px^i f(x)\|_{C(\mathbb{R})}<\infty\right\},\\
& W^{k,j}(\mathbb{R})=
\left\{f(x)\in L^j(\mathbb{R})\,  \Bigl|\Bigr. \,
\|f\|_{W^{k,j}(\mathbb{R})}\equiv
\sum\limits_{i=0}^{k}
\|\px^i f\|_{L^j(\mathbb{R})}<\infty\right\},
\quad j=1\mbox{ or } j=\infty,
\end{split}
\end{equation}
where $k\in\mathbb{N}_0$.
Also it is convenient for us to use the following 
notations for the Banach spaces
\begin{equation}\label{X-not}
X^0=C(\mathbb{R})\cap L^1(\mathbb{R}),\quad
\mbox{and}\quad
X^k=C^k(\mathbb{R})\cap
W^{k,1}(\mathbb{R}),\,\,
\mbox{for }k\in\mathbb{N}.
\end{equation}
Note that when $f$ belongs to either $C^{k}(\R)$ or
$W^{k,\infty}(\R)$, where $k\in \N$, then $f$ is 
a bounded function. Throughout this text, we adopt the convention 
of writing $L^1$, and similarly for other function spaces, without 
specifying $\R$ when it does not introduce ambiguity to the reader.
Moreover, we use the spaces
$$
C^k\left([-T_1,T_2],X\right),\quad
\mbox{where } k\in\mathbb{N}_0,\,
T_1,T_2>0\mbox{ and }
X \mbox{ is a Banach space},
$$
of $k$-times continuously differentiable functions
$u(t):[-T_1,T_2]\mapsto X$ with the norm
$$
\|u\|_{C^k\left([-T_1,T_2],X\right)}
=\sum\limits_{i=0}^k\sup\limits_{t\in[-T_1,T_2]}
\|\pt^i u(t)\|_X.
$$

Finally, we will use the following elementary inequalities
\begin{subequations}
\begin{align}
\label{exp-ineq}
&|e^a-e^b|\leq|a-b|,&& \mbox{for all } a,b\leq 0,\\
\label{exp-ineq-b}
&|e^a-e^b|\leq e^{\max\{a,b\}}|a-b|,&& \mbox{for all } 
a,b\in\mathbb{R},\\
\label{ab-diff}
&|a_1b_1-a_2b_2|\leq|b_1||a_1-a_2|+|a_2||b_1-b_2|,
&& \mbox{for all } a_j,b_j\in\mathbb{R},\,\,j=1,2.
\end{align}
\end{subequations}
\section{Local solutions in Lagrangian coordinates}
\label{Loc}
\subsection{Lagrangian dynamics}
We introduce the following flow map for the two-component system \eqref{two-comp} (cf.\,\,\cite{YQZ15, WY23}):
\begin{equation}\label{y-iv}
\begin{split}
&\pt y(t,\xi)=
\left(\px u-u\right)(\px v+v)(t,y(t,\xi)),
\quad \xi\in\mathbb{R}.
\end{split}
\end{equation}
It turns out that using \eqref{y-iv}, we are able to obtain an explicit 
representation for the solution $(u,v)(t,x)$ in terms of $y(t,\xi)$ and the 
known initial data. All the derivations below are formal and will be justified later.
Form \eqref{two-comp} we have
\begin{equation}\label{cons-0}
\frac{d}{dt}\left[m(t,y(t,\xi))\partial_\xi y(t,\xi)\right]=0\quad\mbox{and}\quad
\frac{d}{dt}\left[n(t,y(t,\xi))\partial_\xi y(t,\xi)\right]=0,
\end{equation}
which imply
\begin{subequations}
	\label{cons-uv}
	\begin{align}
	\label{cons-u}
	&m(t,y(t,\xi))\partial_\xi y(t,\xi)=m_0(y_0(\xi))\pxi y_0(\xi),\\
	\label{cons-v}
	&n(t,y(t,\xi))\partial_\xi y(t,\xi)=n_0(y_0(\xi))\pxi y_0(\xi),
	\end{align}
\end{subequations}
where $m_0(x)=m(0,x)$, $n_0(x)=n(0,x)$ and $y_0(\xi)=y(0,\xi)$.

Assume that $y(t,\xi)\to\pm\infty$ as $\xi\to\pm\infty$ and $y(t,\xi)$ 
is strictly monotone increasing in $\xi$ for all fixed $t$.
Taking into account that $u(t,x)=\frac{1}{2}e^{-|x|}\ast m(t,x)$ and 
using \eqref{cons-u}, we can obtain the following formula 
for the component $u(t,x)$
(cf.\,\,\cite[equation (8)]{GL18}):
\begin{equation}
\nonumber
\begin{split}
u(t,x)&=\frac{1}{2}\int_{-\infty}^{\infty}
e^{-|x-z|}m(t,z)\,dz=
\frac{1}{2}\int_{-\infty}^{\infty}
e^{-|x-y(t,\xi)|}m(t,y(t,\xi))\pxi y(t,\xi)\,d\xi.
\\
&=
\frac{1}{2}\int_{-\infty}^{\infty}
e^{-|x-y(t,\xi)|}m_0(y_0(\xi))\pxi y_0(\xi)\,d\xi.
\end{split}
\end{equation}
Arguing similarly for $v(t,x)$, we obtain the following representations for the components $u$ and $v$:
\begin{subequations}\label{uv-repr}
\begin{align}
\label{u-repr}
&u(t,x)=\frac{1}{2}\int_{-\infty}^{\infty}
e^{-|x-y(t,\eta)|}m_0(y_0(\eta))\peta
y_0(\eta)\,d\eta,\\
\label{v-repr}
&v(t,x)=
\frac{1}{2}\int_{-\infty}^{\infty}
e^{-|x-y(t,\eta)|}n_0(y_0(\eta))
\peta y_0(\eta)\,d\eta.
\end{align}
\end{subequations}
Then \eqref{uv-repr} imply that
\begin{equation}
\label{uv_x-repr}
\begin{split}
&\px u(t,x)=-\frac{1}{2}
\int_{-\infty}^{\infty}
\sgn{x-y(t,\eta)}e^{-|x-y(t,\eta)|}
m_0(y_0(\eta))\peta y_0(\eta)\,d\eta,\\
&\px v(t,x)=-\frac{1}{2}
\int_{-\infty}^{\infty}
\sgn{x-y(t,\eta)}e^{-|x-y(t,\eta)|}
n_0(y_0(\eta))\pxi y_0(\eta)\,d\eta.
\end{split}
\end{equation}

The equations \eqref{uv-repr} and \eqref{uv_x-repr} lead us to 
consider \eqref{y-iv}, subject to the initial condition $y_0(\xi)$, 
as a Cauchy problem for an ordinary differential equation in a 
Banach space. Here, the ODE vector field is characterized by 
the known data $m_0(x)$ and $n_0(x)$. To properly 
formulate this problem, we proceed to define
\begin{equation}
	\label{UV}
	\begin{split}
	&U(t,\xi)\equiv u(t,y(t,\xi))=\frac{1}{2}
	\int_{-\infty}^{\infty}
	e^{-|y(t,\xi)-y(t,\eta)|}
	m_0(y_0(\eta))\peta y_0(\eta)\,d\eta,\\
	&V(t,\xi)\equiv v(t,y(t,\xi))=\frac{1}{2}
	\int_{-\infty}^{\infty}
	e^{-|y(t,\xi)-y(t,\eta)|}
	n_0(y_0(\eta))\peta y_0(\eta)\,d\eta,
	\end{split}
\end{equation}
and
\begin{equation}
	\label{WZ}
	\begin{split}
	&W(t,\xi)\equiv \px u(t,y(t,\xi))\\
	&\qquad\quad\,=-
	\frac{1}{2}
	\int_{-\infty}^{\infty}
	\sgn{y(t,\xi)-y(t,\eta)}
	e^{-|y(t,\xi)-y(t,\eta)|}
	m_0(y_0(\eta))\peta y_0(\eta)\,d\eta,\\
	&Z(t,\xi)\equiv \px v(t,y(t,\xi))\\
	&\qquad\quad\,=-\frac{1}{2}
	\int_{-\infty}^{\infty}
	\sgn{y(t,\xi)-y(t,\eta)}
	e^{-|y(t,\xi)-y(t,\eta)|}
	n_0(y_0(\eta))\peta y_0(\eta)\,d\eta.
	\end{split}
\end{equation}
Observe that when $m_0(x)$ and $n_0(x)$ belong to the 
space $L^1(\mathbb{R})$, we can establish the following  
uniform estimate with respect to $t$ for the functions 
$U$, $W$, $V$, and $Z$:
\begin{equation}\label{UZ-L1}
|U(t,\xi)|, |W(t,\xi)|
\leq\frac{1}{2}\|m_0\|_{L^1(\mathbb{R})},
\quad |V(t,\xi)|, |Z(t,\xi)|
\leq\frac{1}{2}\|n_0\|_{L^1(\mathbb{R})},
\end{equation}
assuming only that $y_0(\xi)$ exhibits monotonic 
behavior and tends towards $\pm \infty$ as $\xi\to \pm \infty$. 
Also it is convenient for us to introduce the function 
\begin{equation}\label{zeta}
\zeta(t,\xi)=y(t,\xi)-\xi,
\end{equation}
which will turn out to be bounded for $\xi\in\mathbb{R}$, 
see \eqref{s-zeta} below.

Taking into account \eqref{y-iv} and that $\pt\zeta(t,\xi)=\pt y(t,\xi)$, 
we obtain the following Cauchy problem:
\begin{equation}\label{zeta-iv}
\begin{split}
&\pt\zeta(t,\xi)=(W-U)(Z+V)(t,\xi),\\
&\zeta(0,\xi)=y_0(\xi)-\xi,
\end{split}
\end{equation}
which is considered in the Banach space 
$E_l\subset C^{1}(\mathbb{R})$ defined as follows 
(cf.\,\,\cite[Section 2.2]{HR07} and \cite[equation (16)]{GL18})
\begin{equation}\label{E-l}
E_l=\{f(\xi)\in C^{1}(\mathbb{R})
\,\,|\,\,\pxi f(\xi)\geq l-1,\,\, \mbox{ for all } \xi\in\mathbb{R} \},\quad l\geq 0,
\end{equation}
with the norm 
$\|f\|_{E_l}=\|f\|_{C^{1}(\mathbb{R})}$. 
Notice that if $\zeta(t,\cdot)\in E_l$ with $l>0$, then 
$\pxi y(t,\xi)\geq l$ for all $\xi$ and therefore $y(t,\cdot)$ 
is strictly monotone increasing.

\subsection{Local characteristic}
Introduce the following operator, corresponding to \eqref{zeta-iv}:
\begin{equation}\label{Ay-def}
A(\zeta)(t,\xi)=
y_0(\xi)-\xi+\int_0^t(W-U)(Z+V)
(\tau,\xi)\,d\tau,\quad t\in[-T,T].
\end{equation}
In Proposition \ref{A-contr-prop} below, we will prove that $A$ is a contraction in 
$C([-T,T],E_l)$ for a class of initial data $y_0(\xi)$ and sufficiently small $T>0$.
To demonstrate this, we establish the following technical lemma:
\begin{lemma}\label{U-diff}
	Suppose that $m_0(x),n_0(x)\in X^0$,
	$y(t,\cdot)$ is strictly monotone increasing for all $t\in[-\tilde T,T]$
	and
	$\zeta(t,\xi)\in 
	C\left([-\tilde T,T], 
	C^1(\mathbb{R})\right)$,
	for some $\tilde T,T>0$.
	Then we have
	\begin{enumerate}
	
	\item $U,W,V,Z\in 
	C\left([-\tilde T,T], 
	C^1(\mathbb{R})\right)$;
	\item the partial derivatives of $U,W,V,Z$ in $\xi$ have the form
	\begin{equation}\label{UZ-diff}
	\begin{split}
	&\pxi
	\begin{pmatrix}
	U(t,\xi)\\
	W(t,\xi)
	\end{pmatrix}=
	\begin{pmatrix}
	0& \pxi y(t,\xi)\\
	\pxi y(t,\xi)&0
	\end{pmatrix}
	\begin{pmatrix}
	U(t,\xi)\\
	W(t,\xi)
	\end{pmatrix}
	-\begin{pmatrix}
	0\\
	m_0(y_0(\xi))\pxi y_0(\xi)
	\end{pmatrix},\\
	&\pxi
	\begin{pmatrix}
	V(t,\xi)\\
	Z(t,\xi)
	\end{pmatrix}=
	\begin{pmatrix}
	0& \pxi y(t,\xi)\\
	\pxi y(t,\xi)&0
	\end{pmatrix}
	\begin{pmatrix}
	V(t,\xi)\\
	Z(t,\xi)
	\end{pmatrix}
	-\begin{pmatrix}
	0\\
	n_0(y_0(\xi))\pxi y_0(\xi)
	\end{pmatrix}.
	\end{split}
	\end{equation}
	\end{enumerate}
	\begin{proof}
	Consider the integrals
	\begin{equation}\label{J-def}
	\begin{split}
	&J_1(t,\xi;m_0)=
	\int_{-\infty}^{\xi}
	e^{y(t,\eta)-y(t,\xi)}m_0(y_0(\eta))
	\peta y_0(\eta)\,d\eta,\\
	&J_2(t,\xi;m_0)=
	\int_{\xi}^{\infty}
	e^{y(t,\xi)-y(t,\eta)}m_0(y_0(\eta))
	\peta y_0(\eta)\,d\eta.
	\end{split}
	\end{equation}
	Then \eqref{UV} and \eqref{WZ} imply
	\begin{equation}\label{UZ-J}
	\begin{split}
	& U(t,\xi)=\frac{1}{2}(J_1+J_2)(t,\xi;m_0),
	\quad
	W(t,\xi)=-\frac{1}{2}(J_1-J_2)(t,\xi;m_0),\\
	& V(t,\xi)=\frac{1}{2}(J_1+J_2)(t,\xi;n_0),
	\quad
	Z(t,\xi)=-\frac{1}{2}(J_1-J_2)(t,\xi;n_0).
	\end{split}
	\end{equation}
	Differentiating \eqref{UZ-J} with respect to $\xi$, direct calculations show \eqref{UZ-diff} and thus we have item (2) of the lemma.
	
	Now let us prove item (1).
	First, we show that $J_1(t,\cdot;m_0)$ is continuous.
	Denoting 
	$\tilde{m}_0(\eta)=m_0(y_0(\eta))
	\peta y_0(\eta)$ and using \eqref{exp-ineq}, we have for any $\xi_1,\xi_2\in\mathbb{R}$, $\xi_1\leq\xi_2$
	(here we drop the arguments $t$, $m_0$)
	\begin{equation*}
	\begin{split}
	\left|J_1(\xi_2)-J_1(\xi_1)\right|
	&\leq
	\left|\int_{-\infty}^{\xi_2}
	e^{y(t,\eta)-y(t,\xi_2)}\tilde{m}_0(\eta)\,d\eta
	-\int_{-\infty}^{\xi_1}
	e^{y(t,\eta)-y(t,\xi_2)}\tilde{m}_0(\eta)\,d\eta
	\right|\\
	&\quad+\left|
	\int_{-\infty}^{\xi_1}
	e^{y(t,\eta)-y(t,\xi_2)}\tilde{m}_0(\eta)\,d\eta
	-
	\int_{-\infty}^{\xi_1}
	e^{y(t,\eta)-y(t,\xi_1)}\tilde{m}_0(\eta)\,d\eta
	\right|\\
	&\leq
	\|m_0\|_{C}\|\pxi y_0\|_{C}|\xi_1-\xi_2|+
	\|m_0\|_{L^1}\|\partial_{(\cdot)} y(t,\cdot)\|_C
	|\xi_1-\xi_2|.
	\end{split}
	\end{equation*}
	Since the condition $\xi_1\leq\xi_2$ does not restrict the 
	generality, we have that $J_1$ is continuous in $\xi$.
	Arguing similarly, we conclude that $J_2(t,\cdot;m_0)$ and 
	$J_j(t,\cdot;n_0)$, $j=1,2$ are continuous, which, together with the estimates
	$\left|J_j(m_0)\right|\leq\|m_0\|_{L^1}$
	and $\left|J_j(n_0)\right|\leq\|n_0\|_{L^1}$, imply 
	that $J_j(t,\cdot;m_0)$, $J_j(t,\cdot;n_0)$ belong to $C(\mathbb{R})$.
	
	Now let us prove that these functions belong to 
	$C\left([-\tilde T,T], C(\mathbb{R})\right)$.
	As above, we consider $J_1(t)=J_1(t,\xi;m_0)$ only, the 
	other integrals can be treated similarly. 
	For all $t_1,t_2\in[-\tilde T,T]$,
	\begin{equation*}
	|J_1(t_1)-J_1(t_2)|
	\leq 2\|m_0\|_{L^1}\|y(t_1,\cdot)-y(t_2,\cdot)\|_C	
	=
	2\|m_0\|_{L^1}
	\|\zeta(t_1,\cdot)-\zeta(t_2,\cdot)\|_C,
	\end{equation*}
	where we have used \eqref{exp-ineq}.
	Since $\zeta\in 
	C\left([-\tilde T,T],C^1\right)$, we have that 
	$J_j(t,\xi;m_0), J_j(t,\xi;n_0)\in
	C\left([-\tilde T,T],C(\mathbb{R})\right)$ and \eqref{UZ-J} 
	implies that $U,W,V,Z$ also belong to this Banach space. 
	Finally, using \eqref{UZ-diff} and that
	$\px y\in C\left([-\tilde{T},T],C\right)$, we arrive at item 1 of the lemma.
	\end{proof}
\end{lemma}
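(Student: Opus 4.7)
The plan is to use the fact that $y(t,\cdot)$ is strictly monotone increasing to rewrite each of $U,W,V,Z$ without the sign function, by splitting the integration range at $\eta=\xi$. Concretely, since $\sgn(y(t,\xi)-y(t,\eta))=\sgn(\xi-\eta)$, the integrals in \eqref{UV} and \eqref{WZ} reduce to combinations of the two auxiliary integrals $J_1$ and $J_2$ defined in \eqref{J-def} (with kernel $m_0$ or $n_0$). The identities
$U=\tfrac{1}{2}(J_1+J_2)(\cdot;m_0)$ and $W=-\tfrac{1}{2}(J_1-J_2)(\cdot;m_0)$ (similarly for $V,Z$ with $n_0$) move the technical work to establishing joint continuity and smoothness for $J_1,J_2$.

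For item (2), I would simply differentiate $J_1$ and $J_2$ in $\xi$ by the Leibniz rule: the boundary contribution at $\eta=\xi$ gives $\pm m_0(y_0(\xi))\pxi y_0(\xi)$ (since the exponential equals $1$ there), while the $\xi$-dependence inside the exponent produces $\mp\pxi y(t,\xi)\,J_j$. Adding and subtracting these according to \eqref{UZ-J} then yields exactly the claimed matrix form \eqref{UZ-diff}; for instance $\pxi U=\pxi y\cdot W$ and $\pxi W=\pxi y\cdot U-m_0(y_0)\pxi y_0$. This step is a direct calculation once the splitting is in place.

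For item (1), continuity of $J_1(t,\cdot;m_0)$ in $\xi$ follows from two estimates: the change in the upper limit contributes at most $\|m_0\|_C\|\pxi y_0\|_C|\xi_1-\xi_2|$, and the change in the exponent factor is handled by the elementary bound \eqref{exp-ineq} together with $|y(t,\xi_1)-y(t,\xi_2)|\le\|\pxi y(t,\cdot)\|_C|\xi_1-\xi_2|$, giving a contribution of order $\|m_0\|_{L^1}\|\pxi y(t,\cdot)\|_C|\xi_1-\xi_2|$. Joint continuity in $t$ follows from the same \eqref{exp-ineq} applied to the exponents: $|J_1(t_1)-J_1(t_2)|\le 2\|m_0\|_{L^1}\|y(t_1,\cdot)-y(t_2,\cdot)\|_C$, which is controlled since $\zeta\in C([-\tilde T,T],C^1)$ implies $y=\zeta+\mathrm{id}\in C([-\tilde T,T],C)$. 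Combining this with the uniform $L^1$ bound $|J_j|\le\|m_0\|_{L^1}$ places $J_j(\cdot;m_0)$ in $C([-\tilde T,T],C(\R))$, and the analogous treatment with $n_0$ handles $V,Z$. Finally, the explicit formulas in \eqref{UZ-diff} together with continuity of $\pxi y\in C([-\tilde T,T],C)$ promote $U,W,V,Z$ to $C([-\tilde T,T],C^1(\R))$.

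The only delicate point is recognizing that the sign function inside the kernels can be eliminated by the monotonicity of $y(t,\cdot)$; once that reduction is made, every remaining estimate is a straightforward application of \eqref{exp-ineq} and the Lipschitz bounds coming from $\zeta\in C([-\tilde T,T],C^1)$. I do not anticipate any genuine obstacle, only bookkeeping when differentiating the improper integrals with variable endpoint.
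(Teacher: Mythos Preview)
Your proposal is correct and follows essentially the same approach as the paper: the same $J_1,J_2$ splitting via monotonicity of $y(t,\cdot)$, the same Leibniz-rule differentiation for item (2), and the same two-term Lipschitz estimate (endpoint contribution plus exponent contribution controlled by \eqref{exp-ineq}) for continuity in $\xi$ and in $t$, followed by the bootstrap to $C^1$ via \eqref{UZ-diff}. Even the specific bounds you write down match the paper's.
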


Now we are at the position to prove the contraction of the operator $A$ defined by \eqref{Ay-def}.
\begin{proposition}\label{A-contr-prop}
	Consider $m_0(x),n_0(x)\in X^0$.
	Assume that 
	$(y_0(\xi)-\xi)\in E_c$ for some $c>0$.
	Take any $0<l<c$.
	Then for all
	\begin{equation}\label{T}
	0<T<\min\left\{
	\frac{1}{4\|m_0\|_{L^{1}}\|n_0\|_{L^{1}}},
	\frac{\min\{1/2,c-l\}}
	{\|\pxi y_0\|_{C}
		\left(\|m_0\|_{C}\|n_0\|_{L^1}+
		\|m_0\|_{L^1}\|n_0\|_{C}
		\right)}
	\right\},
	\end{equation}
	the operator $A$ defined in \eqref{Ay-def} is a contraction in $C([-T,T],E_l)$:
	\begin{equation}\label{A-contr}
	\begin{split}
	\|A(\zeta_1)(t,\xi)-A(\zeta_2)(t,\xi)\|
	_{C([-T,T],E_l)}
	&\leq\alpha\|\zeta_1(t,\xi)-\zeta_2(t,\xi)\|
	_{C([-T,T],C)}\\
	&\leq\alpha\|\zeta_1(t,\xi)-\zeta_2(t,\xi)\|
	_{C([-T,T],E_l)},
	\end{split}
	\end{equation}
	for all $\zeta_j(t,\xi)\in C([-T,T],E_l)$, $j=1,2$ and some 
	$0<\alpha<1$, $\alpha=\alpha(T)$.
\end{proposition}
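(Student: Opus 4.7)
The plan is to verify the two ingredients of a contraction argument: (i) that $A$ maps $C([-T,T],E_l)$ into itself, and (ii) the estimate \eqref{A-contr} with constant $\alpha=\alpha(T)<1$. For any $\zeta\in C([-T,T],E_l)$, the induced flow $y(t,\xi)=\xi+\zeta(t,\xi)$ satisfies $\pxi y\ge l>0$, so $y(t,\cdot)$ is strictly increasing and Lemma~\ref{U-diff} applies, yielding $U,W,V,Z\in C([-T,T],C^1(\R))$ together with the differentiation formulas \eqref{UZ-diff}; hence $A(\zeta)\in C([-T,T],C^1(\R))$ follows directly from the definition of $A$.

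For the self-map property, I would differentiate under the integral to write
\[
\pxi A(\zeta)(t,\xi)=\pxi y_0(\xi)-1+\int_0^t\pxi\bigl[(W-U)(Z+V)\bigr](\tau,\xi)\,d\tau,
\]
and exploit the key algebraic cancellation following from \eqref{UZ-diff}, namely
\[
\pxi\bigl[(W-U)(Z+V)\bigr]=-\pxi y_0(\xi)\bigl[m_0(y_0(\xi))(Z+V)+n_0(y_0(\xi))(W-U)\bigr],
\]
in which the $(\pxi y)$-terms cancel identically. Combined with \eqref{UZ-L1} and the lower bound $\pxi y_0\ge c$ coming from $(y_0-\xi)\in E_c$, the integral is bounded in absolute value by $T\|\pxi y_0\|_C\bigl(\|m_0\|_C\|n_0\|_{L^1}+\|m_0\|_{L^1}\|n_0\|_C\bigr)$, which is at most $c-l$ by the second term in the minimum in \eqref{T}. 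Therefore $\pxi A(\zeta)\ge(c-1)-(c-l)=l-1$, confirming $A(\zeta)(t,\cdot)\in E_l$.

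For the contraction estimate, I would decompose $\|A(\zeta_1)-A(\zeta_2)\|_{E_l}$ into its $C$- and $\pxi$-parts. The essential scalar building block, obtained by applying \eqref{exp-ineq} to $|e^{-|y_1(t,\xi)-y_1(t,\eta)|}-e^{-|y_2(t,\xi)-y_2(t,\eta)|}|$ (the exponents differing by $(\zeta_1-\zeta_2)(t,\xi)-(\zeta_1-\zeta_2)(t,\eta)$) and integrating against $|m_0(y_0(\eta))|\peta y_0(\eta)$, is
\[
\max\{|U_1-U_2|,|W_1-W_2|\}\le\|m_0\|_{L^1}\|\zeta_1-\zeta_2\|_C,
\]
together with the analogous inequality for $V,Z$ with $n_0$ in place of $m_0$. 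The $C$-part then follows by applying \eqref{ab-diff} to $(W-U)(Z+V)$ together with the pointwise bounds \eqref{UZ-L1}, after which integration over $[0,t]$ yields a bound proportional to $T\|m_0\|_{L^1}\|n_0\|_{L^1}\|\zeta_1-\zeta_2\|_{C([-T,T],C)}$, controlled by the first term in the minimum in \eqref{T}. The $\pxi$-part is handled by the same cancellation identity, which expresses the derivative as a product of $\pxi y_0(m_0\circ y_0)$ or $\pxi y_0(n_0\circ y_0)$ with differences of $(W,U,V,Z)$ evaluated at the two flows, and so reduces once again to the scalar estimate; the resulting contribution is controlled by the second term in \eqref{T}.

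The principal subtlety is tracking the constants so that the two contributions to $\alpha$ sum to a number strictly below $1$ under \eqref{T}, while simultaneously respecting the self-map constraint that the integral remain at most $c-l$ in absolute value. The crucial algebraic observation that makes the scheme work is the cancellation in $\pxi[(W-U)(Z+V)]$: without it, the $\pxi$-part estimate would necessarily involve $\|\pxi(\zeta_1-\zeta_2)\|_C$, forcing the right-hand side of \eqref{A-contr} to be the full $E_l$-norm and weakening the stronger estimate actually stated.
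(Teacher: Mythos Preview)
Your proposal is correct and follows essentially the same approach as the paper: you use Lemma~\ref{U-diff} for regularity and the differentiation formulas, exploit the same cancellation in $\pxi[(W-U)(Z+V)]$ to obtain the analogue of \eqref{pdpsiA}, establish the self-map property via the bound $\pxi A(\zeta)\ge(c-1)-(c-l)=l-1$, and derive the contraction by the same scalar estimates on $|U_1-U_2|$, $|W_1-W_2|$, $|V_1-V_2|$, $|Z_1-Z_2|$ together with \eqref{ab-diff} and \eqref{UZ-L1}. Your closing remark that the cancellation is what allows the right-hand side of \eqref{A-contr} to involve only $\|\zeta_1-\zeta_2\|_{C([-T,T],C)}$ is precisely the point.
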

\begin{proof}
	Firstly, note that a function $y_0$ meeting the conditions of the 
	proposition does indeed exist. This can be achieved by 
	choosing $y_0(\xi)=c\xi$, where $c$ is a positive constant.

	Let us prove that 
	$A(\zeta)(t,\xi)\in 
	C\left([-T,T],E_l\right)$ for 
	any $\zeta(t,\xi)\in C([-T,T],E_l)$, where $E_l$ is defined in \eqref{E-l}.
	Differentiating \eqref{Ay-def} in $\xi$ and applying \eqref{UZ-diff}, we obtain 
	\begin{equation}\label{pdpsiA}
	\begin{split}
	\pxi(A(\zeta))(t,\xi)=\pxi y_0(\xi)-1
	-\pxi y_0(\xi)&\left(
	m_0(y_0(\xi))\int_0^t(Z+V)(\tau,\xi)\,d\tau\right.\\
	&\left.
	\quad+n_0(y_0(\xi))\int_0^t(W-U)(\tau,\xi)\,d\tau
	\right).
	\end{split}
	\end{equation}
	Combining \eqref{Ay-def}, \eqref{pdpsiA} and item (1) in Lemma \ref{U-diff}, we conclude that
	$A(\zeta)(t,\xi)\in C\left([-T,T],C^1(\mathbb{R})\right)$.
	
	It remains to prove that 
	$\pxi A(\zeta)(t,\xi)\geq l-1$ 
	for all $t,\xi\in[-T,T]\times\mathbb{R}$.
	From \eqref{pdpsiA} and \eqref{UZ-L1} we have the following inequality:
	\begin{equation*}
	\begin{split}
	|\pxi(A(\zeta))(t,\xi)|&\geq
	c-1-T\|\pxi y_0\|_{C}
	\left(\|m_0\|_{C}(\|V\|_C+\|Z\|_C)+
	\|n_0\|_{C}(\|U\|_C+\|W\|_C)
	\right)\\
	&\geq c-1-T\|\pxi y_0\|_{C}
	\left(\|m_0\|_{C}\|n_0\|_{L^1}+
	\|m_0\|_{L^1}\|n_0\|_{C}
	\right),
	\end{split}
	\end{equation*}
	for all $t,\xi\in[-T,T]\times\mathbb{R}$.
	Taking $T$ as in \eqref{T}, we conclude that 
	$\pxi A(\zeta)(t,\xi)\geq l-1$.
	
	Now let us prove \eqref{A-contr}.
	Let $U_j$, $W_j$, $V_j$ and $Z_j$ $j=1,2$, denote $U$, 
	$W$, $V$ and $Z$ respectively with 
	$\zeta_j(t,\xi)=y_j(t,\xi)-\xi$ instead of $\zeta(t,\xi)=y(t,\xi)-\xi$.
	Using that $y(t,\cdot)$ is strictly monotone increasing, the 
	inequality \eqref{exp-ineq} and that $||a|-|b||\leq|a-b|$ 
	for all $a,b\in\mathbb{R}$, we have
	\begin{subequations}\label{U_1-U_2}
	\begin{equation}
	\begin{split}
	|U_{1}(t,\xi)-&U_{2}(t,\xi)|\leq
	\frac{1}{2}\int_{-\infty}^\infty
	\left||y_2(t,\eta)-y_2(t,\xi)|
	-|y_1(t,\eta)-y_1(t,\xi)|\right|
	|m_0(y_0(\eta))\peta y_0(\eta)|\,d\eta\\
	&\leq\|m_0\|_{L^{1}}
	\|y_1(t,\cdot)-y_2(t,\cdot)\|_{C}=
	\|m_0\|_{L^{1}}
	\|\zeta_1(t,\cdot)-\zeta_2(t,\cdot)\|
	_{C},
	\end{split}
	\end{equation}
	for all $t,\xi\in[-T,T]\times\mathbb{R}$.
	Arguing similarly, we obtain
	\begin{align}
	\label{W_1-W_2}
	&|W_{1}(t,\xi)-W_{2}(t,\xi)|\leq
	\|m_0\|_{L^{1}}
	\|\zeta_1(t,\cdot)-\zeta_2(t,\cdot)\|_{C},\\
	&|V_{1}(t,\xi)-V_{2}(t,\xi)|\leq
	\|n_0\|_{L^{1}}
	\|\zeta_1(t,\cdot)-\zeta_2(t,\cdot)\|_{C},\\
	&|Z_{1}(t,\xi)-Z_{2}(t,\xi)|\leq
	\|n_0\|_{L^{1}}
	\|\zeta_1(t,\cdot)-\zeta_2(t,\cdot)\|_{C}.
	\end{align}
	\end{subequations}
	Using Item (1) of Lemma \ref{U-diff}, \eqref{U_1-U_2} and \eqref{ab-diff}
	it implies from \eqref{Ay-def} that (we drop the 
	arguments $t,\xi$ for $A(\zeta_j)$, $j=1,2$)
	\begin{equation}\label{A-diff}
	\begin{split}
	\left|A(\zeta_1)-A(\zeta_2)\right|
	&\leq T\|n_0\|_{L^{1}}(|U_1-U_2|+|W_1-W_2|)
	+T\|m_0\|_{L^{1}}(|V_1-V_2|+|Z_1-Z_2|)\\
	&\leq 4T\|m_0\|_{L^{1}}\|n_0\|_{L^{1}}
	\|\zeta_1(t,\cdot)-\zeta_2(t,\cdot)\|_{C}.
	\end{split}
	\end{equation}
	To estimate $|\pxi A(\zeta_1)-\pxi A(\zeta_2)|$, 
	we use \eqref{pdpsiA} together with \eqref{U_1-U_2}, which imply
	\begin{equation}\label{pxiA-diff}
	|\pxi A(\zeta_1)-\pxi A(\zeta_2)|
	\leq 2T	\|\pxi y_0\|_{C}
	(\|m_0\|_{C}\|n_0\|_{L^1}+
	\|m_0\|_{L^1}\|n_0\|_{C})
	\|\zeta_1(t,\cdot)-\zeta_2(t,\cdot)\|_{C}.
	\end{equation}
	Combining \eqref{A-diff} and \eqref{pxiA-diff} with $T$ satisfying \eqref{T}, we arrive at \eqref{A-contr}.
\end{proof}

Using Proposition \ref{A-contr-prop}, we can easily prove that there exists a unique local solution of the Cauchy problem \eqref{zeta-iv} in the Banach space $C([-T,T],E_l)$, see Proposition \ref{char-loc} below.
However, to establish the decay rate of the 
solution $\zeta(t,\cdot)$, we will need the following lemma.
\begin{lemma}\label{U-L^1}
	Suppose that 
	$m_0(x),n_0(x)\in X^0$,
	$\zeta(t,\xi)\in
	C^1\left([-T,T], C(\mathbb{R})\right)$
	for some $T>0$ and $y_0(\xi)\in E_l$, $l>0$.
	Then $U,W,V,Z\in C\left([-T,T],L^1(\mathbb{R})\right)$.
\end{lemma}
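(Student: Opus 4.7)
My plan is to proceed in two steps: first establish a uniform-in-$t$ bound on $\|U(t,\cdot)\|_{L^1}$ (and similarly on $W$, $V$, $Z$), and then upgrade this to continuity in $t$ with values in $L^1$ via two applications of the dominated convergence theorem. The treatments of $U$, $W$, $V$, $Z$ are parallel, so I describe only $U$.

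The cornerstone is a simple exponential decay estimate. Setting $M:=2\|\zeta\|_{C([-T,T],C)}$, which is finite since $\zeta\in C^1([-T,T],C(\R))$, the reverse triangle inequality applied to $y(t,\xi)=\xi+\zeta(t,\xi)$ gives $|y(t,\xi)-y(t,\eta)|\ge |\xi-\eta|-M$, hence
\begin{equation*}
e^{-|y(t,\xi)-y(t,\eta)|}\le e^{M}\,e^{-|\xi-\eta|}
\end{equation*}
uniformly in $t\in[-T,T]$ and $\xi,\eta\in\R$. Substituting this bound into \eqref{UV}, applying Fubini, and using the change of variables $z=y_0(\eta)$ (admissible since $y_0\in E_l$ with $l>0$ ensures $\partial_\xi y_0\ge l>0$), I obtain $\|U(t,\cdot)\|_{L^1}\le e^{M}\|m_0\|_{L^1}$, uniformly in $t$.

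For the time continuity, fix $t_0\in[-T,T]$. First, I use dominated convergence in the $\eta$ integral representing $U(t,\xi)-U(t_0,\xi)$: the integrand vanishes pointwise in $\eta$ as $t\to t_0$ by continuity of $\zeta$ in $t$ with values in $C$, and is dominated by $2|m_0(y_0(\eta))|\partial_\eta y_0(\eta)$, which lies in $L^1(\R)$ by the same change of variables $z=y_0(\eta)$. This yields $U(t,\xi)\to U(t_0,\xi)$ pointwise in $\xi$. Second, the exponential estimate above provides the $t$-independent $L^1(\R)$ envelope
\begin{equation*}
|U(t,\xi)-U(t_0,\xi)|\le e^{M}\bigl(e^{-|\cdot|}\ast h\bigr)(\xi),\qquad h(\eta):=|m_0(y_0(\eta))|\partial_\eta y_0(\eta),
\end{equation*}
whose $L^1(\R)$ norm is bounded by $2e^{M}\|m_0\|_{L^1}$. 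A second application of dominated convergence, now in $\xi$, gives $\|U(t,\cdot)-U(t_0,\cdot)\|_{L^1}\to 0$ as $t\to t_0$.

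I do not anticipate a serious obstacle: the argument is a routine combination of Fubini, a single change of variables (legitimate precisely because $y_0\in E_l$ with $l>0$), and two applications of the dominated convergence theorem. The only genuinely nontrivial ingredient is the exponential bound above, whose role is to convert the characteristic-dependent kernel $e^{-|y(t,\xi)-y(t,\eta)|}$ into the $t$-independent envelope $e^{-|\xi-\eta|}$ needed both for Fubini and for the dominating function in the final DCT step.
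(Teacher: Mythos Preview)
Your argument is correct and takes a genuinely different route from the paper's. The paper works through the one-sided integrals $J_1,J_2$ of \eqref{J-def}, bounds $\|J_1(t,\cdot;m_0)\|_{L^1}\le e^{2\|\zeta(t,\cdot)\|_C}\|m_0\|_{L^1}$ via Fubini, and then obtains time continuity by a \emph{quantitative} estimate: splitting $\|J_1(t_1)-J_1(t_2)\|_{L^1}$ into two pieces and applying the mean value theorem in $t$ to get a bound of order $|t_1-t_2|$. This last step is precisely where the $C^1$-in-$t$ hypothesis on $\zeta$ is used. Your approach, by contrast, rests on the single kernel inequality $e^{-|y(t,\xi)-y(t,\eta)|}\le e^{M}e^{-|\xi-\eta|}$, which reduces everything to a fixed convolution with $e^{-|\cdot|}$ and lets two applications of dominated convergence finish the job. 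Your proof therefore actually uses only $\zeta\in C([-T,T],C)$, not $C^1$, so you establish the conclusion under a weaker hypothesis; the paper's route, on the other hand, yields the stronger Lipschitz-in-$t$ continuity. One small caveat on ``parallel'': for $W$ and $Z$ the kernel carries the factor $\sgn{y(t,\xi)-y(t,\eta)}$, so your first DCT (in $\eta$) needs the level set $\{\eta:y(t_0,\eta)=y(t_0,\xi)\}$ to be null. This holds whenever $y(t_0,\cdot)$ is strictly monotone, which is the case in every application of the lemma and is equally implicit in the paper's use of the identity \eqref{UZ-J}; just be aware that the ``parallel'' is not entirely automatic.
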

\begin{proof}
	In view of \eqref{UZ-J}, it is enough to show that 
	the integrals $J_j(m_0)$ and $J_j(n_0)$, $j=1,2$, defined in \eqref{J-def} belong to 
	$C\left([-T,T],L^1(\mathbb{R})\right)$.
	We give a proof for $J_1(t,\xi;m_0)$, the other integrals can be treated similarly.
	
	Changing the order of integration and using that $\|\zeta(t,\cdot)\|_{C(\mathbb{R})}
	=\|y(t,\cdot)-(\cdot)\|_{C(\mathbb{R})}$
	is finite for all $t\in[-T,T]$, we have
	\begin{equation}\label{J-1-L^1}
	\begin{split}
	\|J_1(t,\cdot;m_0)\|_{L^{1}}&\leq
	\int_{-\infty}^{\infty}
	\int_\eta^\infty e^{-y(t,\xi)}\,d\xi\,
	e^{y(t,\eta)}|m_0(y_0(\eta))\peta y_0(\eta)|\,d\eta\\
	&\leq e^{\|\zeta(t,\cdot)\|_{C}}
	\int_{-\infty}^{\infty}
	e^{y(t,\eta)-\eta}
	|m_0(y_0(\eta))\peta y_0(\eta)|\,d\eta
	\leq e^{2\|\zeta(t,\cdot)\|_{C}}
	\|m_0\|_{L^1}.
	\end{split}
	\end{equation}
	Now let us establish the continuity of the map 
	$t\mapsto \|J_1(t)\|_{L^1}$.
	Changing the order of integration as in \eqref{J-1-L^1}, 
	we have for any $t_1,t_2\in[-T,T]$
	\begin{equation*}
	\begin{split}
	\|J_1(t_1)-J_1(t_2)\|_{L^1}&\leq
	\int_{-\infty}^{\infty}\int_{\eta}^\infty
	e^{y(t_1,\eta)}\left|
	e^{-y(t_1,\xi)}-e^{-y(t_2,\xi)}
	\right|\,d\xi
	\, |m_0(y_0(\eta))\peta y_0(\eta)|\,d\eta\\
	&\quad+\int_{-\infty}^{\infty}\int_{\eta}^\infty
	e^{-y(t_2,\xi)}\left|
	e^{y(t_1,\eta)}-e^{y(t_2,\eta)}
	\right|\,d\xi
	\, |m_0(y_0(\eta))\peta y_0(\eta)|\,d\eta\\
	&=I_1+I_2.
	\end{split}
	\end{equation*}
	The integral $I_1$ can be estimated by 
	using the mean value theorem and taking into account that
	$\pt y=\pt\zeta$, see \eqref{zeta}, as follows:
	\begin{equation}\label{I_1-in}
	\begin{split}
	I_1&\leq\int_{-\infty}^{\infty}
	e^{y(t_1,\eta)}\int_{\eta}^\infty
	|\pt y(t^*,\xi)|e^{-y(t^*,\xi)}|t_1-t_2|\,d\xi
	|m_0(y_0(\eta))\peta y_0(\eta)|\,d\eta\\
	&\leq\|\pt\zeta(t^*,\cdot)\|_C
	e^{\|\zeta(t_1,\cdot)\|_C+\|\zeta(t^*,\cdot)\|_C}
	\|m_0\|_{L^1}|t_1-t_2|,
	\end{split}
	\end{equation}
	for some $t^*$ between $t_1$ and $t_2$.
	In a similar manner we can estimate $I_2$ and thus eventually conclude that
	$J_j(m_0),J_j(n_0)\in 
	C\left([-T,T],L^1\right)$,
	$j=1,2$.
\end{proof}
Now we can show that there exists a unique local solution $\zeta(t,\xi)$  of the Cauchy problem \eqref{zeta-iv}, which has additional regularity and decay rate for a class of initial data $m_0,n_0$.

\begin{proposition}[Existence and uniqueness of the local characteristics]
	\label{char-loc}
	Suppose that $m_0(x)$, 
	$n_0(x)\in X^k$,
	$(y_0(\xi)-\xi)\in X^{k+1}$ for some $k\in\mathbb{N}_0$ and
	$(y_0(\xi)-\xi)\in E_c$, $c>0$.
	Then for any $0<l<c$ and $T$ satisfying \eqref{T} there exists a unique 
	$\zeta(t,\xi)\in C([-T,T],E_l)$ such that
	\begin{equation}\label{zeta-int}
	\zeta(t,\xi)=
	y_0(\xi)-\xi+\int_0^t(W-U)(Z+V)
	(\tau,\xi)\,d\tau,\quad t\in[-T,T],
	\end{equation}
	which is a unique local solution of the Cauchy problem \eqref{zeta-iv} in the Banach space $C([-T,T],E_l)$.
	Moreover, the solution $\zeta(t,\xi)$ has the following regularity and decay properties:
	\begin{equation}\label{zeta-reg}
	\zeta(t,\xi)\in
	C^1\left([-T,T],
	X^{k+1}
	\right).
	\end{equation}
	
	Finally, $\zeta(t,\xi)$ satisfies the following size estimates:
	\begin{subequations}\label{s-zeta}
	\begin{align}
	\label{s-zeta-a}
	&\|\zeta(t,\cdot)\|_{C}\leq
	\|y_0(\cdot)-(\cdot)\|_{C}
	+T
	\|m_{0}\|_{L^{1}}
	\|n_{0}\|_{L^{1}},\\
	\label{s-zeta-c}
	&\|\partial_{(\cdot)}^{j+1}\zeta(t,\cdot)\|_{C}\leq
	1+\|\pxi^{j+1} y_0\|_{C}+
	TC_j,\quad j=0,\dots,k,
	\end{align}
	\end{subequations}
	with 
	$$C_0=\|\pxi y_0\|_{C}
	\left(
	\|m_0\|_{C}\|n_0\|_{L^{1}}
	+\|m_0\|_{L^1}\|n_0\|_{C}
	\right),$$ and
	some $C_j=C_j(\|\pxi^j y_0\|_C,\|m_0\|_{X^{j}},
	\|n_0\|_{X^{j}})>0$.
\end{proposition}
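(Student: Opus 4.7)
My plan is to prove the three assertions---existence/uniqueness, regularity \eqref{zeta-reg}, and the size estimates \eqref{s-zeta}---in sequence, combining Banach's fixed point theorem with a bootstrapping argument driven by the structural identities from Lemmas \ref{U-diff} and \ref{U-L^1}. Since $E_l$ is a closed subset of $C^1(\R)$, cut out by the closed condition $\pxi f \geq l-1$, the space $C([-T,T], E_l)$ is a complete metric space under the inherited norm. For $T$ satisfying \eqref{T}, Proposition \ref{A-contr-prop} provides the contraction property of the operator $A$ from \eqref{Ay-def}, so Banach's theorem delivers a unique fixed point $\zeta \in C([-T,T], E_l)$ satisfying \eqref{zeta-int}, and differentiating in $t$ confirms it solves the Cauchy problem \eqref{zeta-iv}.

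For the regularity \eqref{zeta-reg}, I would proceed inductively on $k$. In the base case $k=0$, Lemma \ref{U-diff} yields $U,W,V,Z \in C([-T,T], C^1)$, hence $\pt \zeta = (W-U)(Z+V) \in C([-T,T], C^1)$ and so $\zeta \in C^1([-T,T], C^1)$. Lemma \ref{U-L^1} furnishes $U,W,V,Z \in C([-T,T], L^1)$ which, combined with the uniform bound \eqref{UZ-L1}, gives $(W-U)(Z+V) \in C([-T,T], L^1 \cap L^\infty)$; using \eqref{UZ-diff} together with $y_0 - \xi \in X^1$ one also controls $\pxi \pt \zeta$ in $C \cap L^1$. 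Integrating in $t$ then places $\zeta$ in $C^1([-T,T], X^1)$. For the inductive step $k \geq 1$, one differentiates \eqref{UZ-diff} further: each additional $\xi$-derivative produces terms involving $\pxi^j y$ (which equals $1 + \pxi\zeta$ for $j=1$ and $\pxi^j \zeta$ for $j \geq 2$, controlled at level $k$ by the inductive hypothesis) together with derivatives of $m_0 \circ y_0$ and $n_0 \circ y_0$ up to order $k$, all of which are affordable given $m_0, n_0 \in X^k$ and $y_0 - \xi \in X^{k+1}$.

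The size estimate \eqref{s-zeta-a} follows immediately from \eqref{zeta-int} and the bound $|(W-U)(Z+V)| \leq \|m_0\|_{L^1} \|n_0\|_{L^1}$ deduced from \eqref{UZ-L1}. The estimate \eqref{s-zeta-c} for $j=0$ follows from \eqref{pdpsiA} combined with \eqref{UZ-L1}, and for $j \geq 1$ is extracted from the inductive computation sketched above, with the constants $C_j$ assembled from the $X^j$-norms of $m_0$, $n_0$ and the $C$-norm of $\pxi^j y_0$. I expect the main obstacle to be the inductive step for the $L^1$ regularity of $\pxi^{j+1} \zeta$: repeated differentiation of the exponential kernels defining $U,W,V,Z$ produces signed double integrals whose $L^1$ bounds are not immediate, and a change-of-variables argument analogous to the one used in the proof of Lemma \ref{U-L^1} will be needed to control them uniformly in $t$.
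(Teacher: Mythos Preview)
Your proposal is correct and follows the same approach as the paper: contraction via Proposition \ref{A-contr-prop}, then inductive bootstrapping of regularity using the differentiation identities \eqref{UZ-diff} together with Lemma \ref{U-L^1}. The obstacle you anticipate for the $L^1$ inductive step does not materialize: \eqref{UZ-diff} is a closed recursion (differentiating $U,W,V,Z$ returns $U,W,V,Z$ themselves times $\pxi y$, plus the explicit terms $m_0(y_0)\pxi y_0$, $n_0(y_0)\pxi y_0$), so no new integral kernels appear; moreover, in $\pxi\bigl((W-U)(Z+V)\bigr)$ the $\pxi y$ contributions cancel exactly, yielding the paper's formula \eqref{pdpsi-zeta}, whose right-hand side involves only $\pxi y_0$, $m_0(y_0)$, $n_0(y_0)$, and time-integrals of $U,W,V,Z$---making the bootstrap and the constant $C_0$ in \eqref{s-zeta-c} particularly transparent.
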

\begin{proof}
	The existence and uniqueness of the fixed point of the 
	operator $A$ in the Banach space $C([-T,T], E_l)$ 
	follows from Proposition \ref{A-contr-prop}
	and the contraction mapping theorem.
	
	Let us prove that $\zeta\in C([-T,T], C^{k+1})$ by induction.
	We already know (see \eqref{E-l}) that
	$\zeta\in C\left([-T,T], C^{1}\right)$
	and thus the base case of the induction is established.
	Suppose that 
	$\zeta\in C\left([-T,T], C^{j}\right)$, 
	for some $j=1,\dots,k$.
	To show that 
	$\zeta\in C\left([-T,T], C^{j+1}\right)$
	we notice that \eqref{pdpsiA} implies 
	\begin{equation}\label{pdpsi-zeta}
	\begin{split}
	\pxi\zeta(t,\xi)=\pxi y_0(\xi)-1
	-\pxi y_0(\xi)&\left(
	m_0(y_0(\xi))\int_0^t(Z+V)(\tau,\xi)\,d\tau\right.\\
	&\left.
	\quad+n_0(y_0(\xi))\int_0^t(W-U)(\tau,\xi)\,d\tau
	\right),
	\end{split}
	\end{equation}
	where the right-hand side does not depend on $\pxi\zeta(t,\xi)$.
	Therefore \eqref{pdpsi-zeta} and item (2) of Lemma \ref{U-diff} imply that 
	\begin{equation}\label{zeta-diff-j}
	\pxi^{j+1}\zeta(t,\xi)=
	\pxi^{j+1}(y_0(\xi)-\xi)
	+\mathcal{P}_1\int_0^t\mathcal{P}_2\,d\tau
	+\mathcal{P}_3\int_0^t\mathcal{P}_4\,d\tau,
	\end{equation}
	where $\mathcal{P}_r$, $r=1,\dots,4$,
	are polynomials which depend on 
	$U,W,V,Z$, $\{\pxi^i\zeta\}_{i=0}^{j}$, 
	$\{\pxi^im_0\}_{i=0}^{j}$, $\{\pxi^in_0\}_{i=0}^{j}$ and $\{\pxi^i(y_0(\xi)-\xi)\}_{i=1}^{j+1}$.
	Using item (1) of Lemma \ref{U-diff} as well as the induction hypothesis, we conclude that
	$\zeta\in C\left([-T,T],C^{j+1}\right)$,
	which completes the induction step.
	Therefore we have established that 
	$\zeta\in C\left([-T,T],C^{k+1}\right)$.
	Arguing similarly for $\pt\zeta$ and $\pxi\pt\zeta$ we conclude that
	$\pt\zeta\in C\left([-T,T],C^{k+1}\right)$
	and therefore 
	$\zeta\in C^1\left([-T,T],C^{k+1}\right)$.
	
	We also establish that $\zeta\in C\left([-T,T],W^{k+1,1}\right)$ 
	through an inductive approach.
	Lemma \ref{U-L^1}, \eqref{zeta-int}, \eqref{pdpsi-zeta}, 
	and \eqref{UZ-L1} imply that 
	$\zeta\in C\left([-T,T], W^{1,1}\right)$.
	The induction step can be proved by using 
	\eqref{zeta-diff-j} and applying again Lemma \ref{U-L^1}, 
	item (1) of Lemma \ref{U-diff} and the induction hypothesis, that is 
	$\zeta\in 
	C\left([-T,T], W^{j,1}\right)$, 
	$j=1,\dots k$.
	Reasoning in the similar manner, we can prove that
	$\pt\zeta\in
	C\left([-T,T], W^{k+1,1}\right)$.
	
	Finally, inequalities \eqref{s-zeta-a} and, \eqref{s-zeta-c} with $j=0$ follow from \eqref{UZ-L1}, \eqref{zeta-int} and \eqref{pdpsi-zeta}, while
	\eqref{s-zeta-c} for $j=1,\dots,k$ follows from \eqref{zeta-diff-j}.
\end{proof}
\begin{remark}
	Notice that the time $T>0$ in \eqref{T} depends on the initial data $m_0,n_0$ and it cannot be extended by choosing certain specific initial value $y_0$ of the Cauchy problem \eqref{zeta-iv}.
\end{remark}

Finally, let us prove the 
continuous dependence of $\zeta$ on $m_0,n_0$, which
will be used in Section \ref{lwps} for proving the Lipschitz continuity properties of the solution $(m,n)$, see Corollary \ref{Lip-c} below.
\begin{proposition}[Lipschitz continuity of $\zeta$ on $(m_0,n_0)$]
	\label{Lip-cont-ch}
	Fix any two constants $0<R_0\leq R$.
	Suppose that $m_{0,j}(x),n_{0,j}(x)\in X^{k}$, $j=1,2$, for some $k\in\mathbb{N}_0$, are such that
	$$
	\|m_{0,j}\|_{X^{0}},
	\|n_{0,j}\|_{X^{0}}\leq R_0,\quad
	\mbox{and}\quad
	\|m_{0,j}\|_{X^{k}},
	\|n_{0,j}\|_{X^{k}}\leq R,\quad j=1,2.
	$$
	Also assume that
	$(y_0(\xi)-\xi)\in X^{k+1}$ and
	$(y_0(\xi)-\xi)\in E_c$ for some $c>0$.
	Consider the corresponding characteristics (see Proposition \ref{char-loc})
	\begin{equation}\label{zeta-j-int}
	\zeta_j(t,\xi)=
	y_0(\xi)-\xi+\int_0^t
	\left(\hat{W}_j-\hat{U}_j\right)
	\left(\hat{Z}_j+\hat{V}_j\right)
	(\tau,\xi)\,d\tau,\quad t\in[-T,T],\quad j=1,2,
	\end{equation}
	where $\hat{U}_j$, $\hat{W}_j$, $\hat{V}_j$ and
	$\hat{Z}_j$ are defined by \eqref{UV}, \eqref{WZ} with
	$\zeta_j=y_j-\xi$, $m_{0,j}$ and $n_{0,j}$ instead of
	$y$, $m_0$ and $n_0$ respectively, $j=1,2$. 
	Then we have the following Lipschitz property of 
	$\zeta_j$ for a sufficiently small $T>0$ (here we drop the 
	arguments of functions for simplicity):
	\begin{subequations}\label{Lip-cont-zeta}
	\begin{align}
	\label{Lip-cont-zeta-a}
	&\|\zeta_1-\zeta_2\|
	_{C\left([-T,T],X^0\right)}
	\leq C_1\left(\|m_{0,1}-m_{0,2}\|_{L^{1}}
	+\|n_{0,1}-n_{0,2}\|_{L^{1}}
	\right),\\
	&\label{Lip-cont-zeta-b}
	\|\zeta_1-\zeta_2\|
	_{C\left([-T,T],X^{r+1}\right)}
	\leq C_{2,r}\left(\|m_{0,1}-m_{0,2}\|_{X^{r}}
	+\|n_{0,1}-n_{0,2}\|_{X^{r}}
	\right),\quad r=0,\dots,k,
	\end{align}
	\end{subequations}
	for some $C_1=C_1(T,\|y_0(\cdot)-(\cdot)\|_{C^1},R_0)>0$ and
	$C_{2,r}=
	C_{2,r}(T,\|y_0(\cdot)-(\cdot)\|_{C^{r+1}},R)>0$.
\end{proposition}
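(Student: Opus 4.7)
The plan is to start from the integral representation \eqref{zeta-j-int}, subtract the $j=1$ and $j=2$ copies, and derive a closed integral inequality for $\zeta_1 - \zeta_2$ that can be closed by choosing $T$ small enough (still compatible with \eqref{T}). Applying the product-difference identity \eqref{ab-diff} factor by factor, I would write $\zeta_1(t,\xi) - \zeta_2(t,\xi)$ as a time integral of the differences $\hat U_1 - \hat U_2$, $\hat W_1 - \hat W_2$, $\hat V_1 - \hat V_2$, $\hat Z_1 - \hat Z_2$, each multiplied by factors uniformly bounded by $R_0/2$ via \eqref{UZ-L1}. The core step is then a telescoping decomposition
$$
\hat U_1 - \hat U_2 = \bigl(\hat U_1 - \hat U^{*}\bigr) + \bigl(\hat U^{*} - \hat U_2\bigr),
$$
where $\hat U^{*}$ is defined like $\hat U_2$ but using the characteristic $y_2$ together with the datum $m_{0,1}$. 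The first piece has the same datum $m_{0,1}$ on both sides and is bounded by $R_0 \|\zeta_1 - \zeta_2\|_C$ exactly as in \eqref{U_1-U_2}; the second has the same characteristic $y_2$ on both sides and integrand proportional to $m_{0,1} - m_{0,2}$, hence is controlled by $\|m_{0,1} - m_{0,2}\|_{L^1}$ using the trivial kernel bound $e^{-|y_2(t,\xi) - y_2(t,\eta)|} \leq 1$.

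Assembling these bounds and shrinking $T$ enough to absorb the resulting $C\,T\,\|\zeta_1-\zeta_2\|_{C([-T,T],C)}$ term on the left yields the sup-norm part of \eqref{Lip-cont-zeta-a}. For the $L^1$ part of $X^0$, I would repeat the same telescoping inside the $L^1$ norm, following the change-of-order trick of \eqref{J-1-L^1}: the estimate $e^{y(t,\eta)-y(t,\xi)} \leq e^{2\|\zeta(t,\cdot)\|_C}$ together with the strict monotonicity $\partial_\xi y_j \geq l > 0$ (guaranteed by $\zeta_j \in E_l$ via Proposition \ref{char-loc}) gives $L^1$-analogues of the two telescoped pieces. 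Combining these with the sup-norm estimate establishes \eqref{Lip-cont-zeta-a}.

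For \eqref{Lip-cont-zeta-b}, I would run an induction on $r = 0, \ldots, k$, using the polynomial formula \eqref{zeta-diff-j} for $\partial_\xi^{r+1}\zeta_j$ in terms of $\hat U_j, \hat W_j, \hat V_j, \hat Z_j$, lower-order derivatives of $\zeta_j$, and derivatives of $m_{0,j}, n_{0,j}$ up to order $r$. Subtracting across $j=1,2$ and applying \eqref{ab-diff} factor by factor yields a bound in terms of $\|\zeta_1 - \zeta_2\|_{C([-T,T],X^r)}$ (handled by the inductive hypothesis), of differences $\hat U_1 - \hat U_2$, $\hat W_1 - \hat W_2$, $\hat V_1 - \hat V_2$, $\hat Z_1 - \hat Z_2$ up to order $r$ in $X^r$-norms (treated by differentiating the kernel identities of Lemma \ref{U-diff} and applying the same telescoping as above), and of the target quantity $\|m_{0,1}-m_{0,2}\|_{X^r} + \|n_{0,1}-n_{0,2}\|_{X^r}$. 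The uniform bounds $R_0$ and $R$ keep all the constants $C_{2,r}$ independent of the particular data.

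The main obstacle will be the systematic bookkeeping at the inductive step: the polynomial in \eqref{zeta-diff-j} has many factors, and one must apply \eqref{ab-diff} carefully so that only one factor at a time carries a difference, with the remaining factors uniformly bounded by constants depending only on $R$, $R_0$, and $\|y_0(\cdot)-(\cdot)\|_{C^{r+1}}$. A further, still finite, shrinkage of $T$ at each of the $k+1$ induction levels may be needed to absorb the $\|\zeta_1-\zeta_2\|_{X^{r+1}}$ terms produced by telescoping, but this remains compatible with \eqref{T}.
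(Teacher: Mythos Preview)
Your proposal is correct and follows essentially the same route as the paper: the telescoping $\hat U_1-\hat U_2=(\hat U_1-\hat U^{*})+(\hat U^{*}-\hat U_2)$ is exactly the splitting the paper performs on the $\hat J_{i,j}$ integrals, the $L^1$ part is handled there by the same change-of-order argument you cite from \eqref{J-1-L^1} (together with \eqref{exp-ineq-b} and the a priori bound \eqref{s-zeta-a} on $\|\zeta_j\|_C$), and the higher-order estimate \eqref{Lip-cont-zeta-b} is obtained by differentiating the explicit formula \eqref{pdpsi-zeta} (your reference to \eqref{zeta-diff-j} captures the same structure). One small over-caution: because the right-hand side of \eqref{pdpsi-zeta} and its iterates does \emph{not} involve $\partial_\xi^{r+1}\zeta$, no additional shrinkage of $T$ is needed at the inductive steps---only the base $C$-norm absorption requires it.
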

\begin{proof}
	Introduce the following integrals (cf.\,\,\eqref{J-def}):
	\begin{equation*}
	\begin{split}
	&\hat{J}_{1,j}(t,\xi;m_{0,j})=
	\int_{-\infty}^{\xi}
	e^{y_j(t,\eta)-y_j(t,\xi)}m_{0,j}(y_0(\eta))
	\peta y_0(\eta)\,d\eta,\quad j=1,2,\\
	&\hat{J}_{2,j}(t,\xi;m_{0,j})=
	\int_{\xi}^{\infty}
	e^{y_j(t,\xi)-y_j(t,\eta)}m_{0,j}(y_0(\eta))
	\peta y_0(\eta)\,d\eta,\quad j=1,2.
	\end{split}
	\end{equation*}
	Then we have
	\begin{equation}\label{UZ-hatJ}
	\begin{split}
	& \hat{U}_j(t,\xi)=
	\frac{1}{2}\left(\hat{J}_{1,j}+\hat{J}_{2,j}\right)
	(t,\xi;m_{0,j}),
	\quad
	\hat{W}_j(t,\xi)=
	-\frac{1}{2}\left(\hat{J}_{1,j}-\hat{J}_{2,j}\right)
	(t,\xi;m_{0,j}),\\
	& \hat{V}_j(t,\xi)=
	\frac{1}{2}\left(\hat{J}_{1,j}+\hat{J}_{2,j}\right)
	(t,\xi;n_{0,j}),
	\quad
	\hat{Z}_j(t,\xi)=
	-\frac{1}{2}\left(\hat{J}_{1,j}-\hat{J}_{2,j}\right)
	(t,\xi;n_{0,j}),
	\end{split}
	\end{equation}
	where $j=1,2$.
	
	First, we prove \eqref{Lip-cont-zeta-a}.
	Observe that
	(here 
	$\hat{J}_{1,j}(m_{0,j})
	=\hat{J}_{1,j}(t,\xi;m_{0,j})$, $j=1,2$)
	\begin{equation*}
	\begin{split}
	|\hat{J}_{1,1}(m_{0,1})-\hat{J}_{1,2}(m_{0,2})|&\leq
	\int_{-\infty}^{\xi}\left|
	e^{y_1(t,\eta)-y_1(t,\xi)}-e^{y_2(t,\eta)-y_2(t,\xi)}
	\right||m_{0,1}(y_0(\eta))\peta y_0(\eta)|\,d\eta\\
	&\quad +\int_{-\infty}^{\xi}
	e^{y_2(t,\eta)-y_2(t,\xi)}
	|(m_{0,1}-m_{0,2})(y_0(\eta))\peta y_0(\eta)|
	\,d\eta\\
	&\leq 2\|m_{0,1}\|_{L^1}
	\|\zeta_1(t,\cdot)-\zeta_2(t,\cdot)\|_{C}
	+\|m_{0,1}-m_{0,2}\|_{L^1},
	\end{split}
	\end{equation*}
	where in the second inequality we have used \eqref{exp-ineq}.
	Arguing similarly for 
	$|\hat{J}_{2,1}(m_{0,1})-\hat{J}_{2,2}(m_{0,2})|$
	and
	$|\hat{J}_{i,1}(n_{0,1})-\hat{J}_{i,2}(n_{0,2})|$, $i=1,2$, 
	we conclude from \eqref{UZ-hatJ} (we drop the arguments for simplicity)
	\begin{equation}\label{hatU-diff}
	\begin{split}
	|\hat{U}_1-\hat{U}_2|,|\hat{W}_1-\hat{W}_2|,
	|\hat{V}_1-\hat{V}_2|,|\hat{Z}_1-\hat{Z}_2|&\leq
	2\left(\|m_{0,1}\|_{L^1}+\|n_{0,1}\|_{L^1}\right)
	\|\zeta_1(t,\cdot)-\zeta_2(t,\cdot)\|_{C}\\
	&\quad+\|m_{0,1}-m_{0,2}\|_{L^1}
	+\|n_{0,1}-n_{0,2}\|_{L^1}.
	\end{split}
	\end{equation}
	Then combining \eqref{zeta-j-int} and \eqref{ab-diff}, \eqref{UZ-L1} for $U_j$, $W_j$, $V_j$, $Z_j$, $j=1,2$, we arrive at
	(here $C=C\left([-T,T],C\right)$)
	\begin{equation}\label{z1-z2}
	\|\zeta_1-\zeta_2\|_{C}
	\leq T\tilde{C}\left(\|U_1-U_2\|_C+
	\|W_1-W_2\|_C+\|V_1-V_2\|_C+\|Z_1-Z_2\|_C\right),
	\end{equation}
	for some 
	$\tilde{C}=
	\tilde{C}(\|m_{0,j}\|_{L^1},\|n_{0,j}\|_{L^1})$.
	Inequality \eqref{z1-z2} together with \eqref{hatU-diff} implies \eqref{Lip-cont-zeta-a}, 
	for a sufficiently small $T>0$, with the norm $\|\cdot\|_{C([-T,T],C)}$ 
	utilized on the left hand side.
	
	To obtain \eqref{Lip-cont-zeta-a} for the norm 
	$\|\cdot\|_{C([-T,T],L^1)}$, we should estimate the $L^1$-norms 
	of the differences on the left hand side of \eqref{hatU-diff}.
	Notice that
	\begin{equation*}
	\int_{-\infty}^{\infty}
	|\hat{J}_{1,1}(m_{0,1})-\hat{J}_{1,2}(m_{0,2})|\,d\xi
	\leq I_3+I_4,
	\end{equation*}
	where (here we change the order of integration and use the notation 
	$\tilde{M}_{0,1}(\eta)=|m_{0,1}(y_0(\eta))\peta y_0(\eta)|$)
	\begin{equation*}
	\begin{split}
	&I_3=\int_{-\infty}^{\infty}
	\tilde{M}_{0,1}(\eta)\int_{\eta}^{\infty}\left|
	e^{y_1(t,\eta)-y_1(t,\xi)}-e^{y_2(t,\eta)-y_2(t,\xi)}
	\right|\,d\xi\, d\eta,\\
	&I_4=\int_{-\infty}^{\infty}
	|(m_{0,1}-m_{0,2})(y_0(\eta))\peta y_0(\eta)|
	\int_{\eta}^{\infty}
	e^{y_2(t,\eta)-y_2(t,\xi)}
	\,d\xi \, d\eta.
	\end{split}
	\end{equation*}
	Recalling that $y_j(t,\xi)=\zeta_j(t,\xi)-\xi$ and 
	using \eqref{exp-ineq-b}, we obtain
	\begin{equation*}
	\begin{split}
	I_3&\leq\int_{-\infty}^{\infty}
	\tilde{M}_{0,1}(\eta)\int_{\eta}^{\infty}
	e^{y_1(t,\eta)}
	\left|e^{-y_1(t,\xi)}-e^{-y_2(t,\xi)}\right|
	+e^{-y_2(t,\xi)}
	\left|e^{y_1(t,\eta)}-e^{y_2(t,\eta)}\right|
	\,d\xi\,d\eta\\
	&\leq\int_{-\infty}^{\infty}
	\tilde{M}_{0,1}(\eta)e^{y_1(t,\eta)}
	\int_{\eta}^{\infty}
	e^{-\eta}
	\left|e^{-\zeta_1(t,\xi)}
	-e^{-\zeta_2(t,\xi)}\right|
	\,d\xi\,d\eta\\
	&\quad+
	e^{\|\zeta_2(t,\cdot)\|_C}\int_{-\infty}^{\infty}
	\tilde{M}_{0,1}(\eta)
	\left|e^{y_1(t,\eta)}-e^{y_2(t,\eta)}\right|
	\int_{\eta}^{\infty} e^{-\xi}\,d\xi\,d\eta\\
	&\leq (\|m_{0,1}\|_{L^1}
	+\|m_{0,1}\|_{C}\|\pxi y_0\|_C)
	e^{2\max
	\{\|\zeta_1(t,\cdot)\|_C,\|\zeta_2(t,\cdot)\|_C\}}
	\|\zeta_1(t,\cdot)-\zeta_2(t,\cdot)\|_{L^1}.
	\end{split}
	\end{equation*}
	The integral $I_4$ can be estimated as follows:
	\begin{equation*}
	\begin{split}
	I_4&\leq e^{\|\zeta_2(t,\cdot)\|_C}
	\int_{-\infty}^{\infty}
	|(m_{0,1}-m_{0,2})(y_0(\eta))\peta y_0(\eta)|
	e^{y_2(t,\eta)}
	\int_{\eta}^{\infty}e^{-\xi}\,d\xi\,d\eta\\
	&\leq e^{2\|\zeta_2(t,\cdot)\|_C}
	\|m_{0,1}-m_{0,2}\|_{L^1}.
	\end{split}
	\end{equation*}
	Arguing similarly for 
	$\|\hat{J}_{2,1}(m_{0,1})
	-\hat{J}_{2,2}(m_{0,2})\|_{L^1}$
	and
	$\|\hat{J}_{i,1}(n_{0,1})
	-\hat{J}_{i,2}(n_{0,2})\|_{L^1}$, $i=1,2$, we obtain
	\begin{equation}\label{hatU-L1}
	\begin{split}
	\|\hat{U}_1-&\hat{U}_2\|_{L^1}
	,\|\hat{W}_1-\hat{W}_2\|_{L^1},
	\|\hat{V}_1-\hat{V}_2\|_{L^1},
	\|\hat{Z}_1-\hat{Z}_2\|_{L^1}\leq
	e^{2\max
		\{\|\zeta_1(t,\cdot)\|_C,\|\zeta_2(t,\cdot)\|_C\}}\\
	&\times\bigl(\left(\|m_{0,1}\|_{L^1}
	+\|m_{0,1}\|_{C}\|\pxi y_0\|_C
	+\|n_{0,1}\|_{L^1}
	+\|n_{0,1}\|_{C}\|\pxi y_0\|_C\right)
	\|\zeta_1(t,\cdot)-\zeta_2(t,\cdot)\|_{L^1}\bigr.\\
	&\bigl.\qquad+\|m_{0,1}-m_{0,2}\|_{L^1}
	+\|n_{0,1}-n_{0,2}\|_{L^1}\bigr).
	\end{split}
	\end{equation}
	Using \eqref{zeta-j-int} and \eqref{ab-diff}, \eqref{UZ-L1} for $U_j$, $W_j$, $V_j$, $Z_j$, $j=1,2$, we arrive at (cf.\,\,\eqref{z1-z2})
	\begin{equation*}
	\begin{split}
	\|\zeta_1-\zeta_2\|_{C([-T,T],L^1)}
	\leq& T\tilde{C}\left(\|U_1-U_2\|_{C([-T,T],L^1)}+
	\|W_1-W_2\|_{C([-T,T],L^1)}\right.\\
	&\left.\qquad+\|V_1-V_2\|_{C([-T,T],L^1)}
	+\|Z_1-Z_2\|_{C([-T,T],L^1)}\right),
	\end{split}
	\end{equation*}
	for some 
	$\tilde{C}=
	\tilde{C}(\|m_{0,j}\|_{L^1},\|n_{0,j}\|_{L^1})$.
	The latter  inequality together with \eqref{hatU-L1} and \eqref{s-zeta-a} implies \eqref{Lip-cont-zeta-a} with the norm $\|\cdot\|_{C([-T,T],L^1)}$.
	Recalling that $X^0=C\cap L^1$, we conclude that \eqref{Lip-cont-zeta-a} is proved.
	 
	Then observing that (cf.\,\,\eqref{pdpsi-zeta})
	\begin{equation}\label{pdpsi-zeta-j}
	\begin{split}
	\pxi\zeta_j(t,\xi)=\pxi y_0(\xi)-1
	-\pxi y_0(\xi)&\left(
	m_{0,j}(y_0(\xi))
	\int_0^t(\hat{Z}_j+\hat{V}_j)(\tau,\xi)\,d\tau\right.\\
	&\left.
	\quad-n_{0,j}(y_0(\xi))
	\int_0^t(\hat{U}_j-\hat{W}_j)(\tau,\xi)\,d\tau
	\right),\quad j=1,2,
	\end{split}
	\end{equation}
	and arguing as in the proof of \eqref{Lip-cont-zeta-a}, we obtain \eqref{Lip-cont-zeta-b} for $r=0$.
	Finally, successively differentiating 
	\eqref{pdpsi-zeta-j} with respect to $\xi$ and applying item (2) in Lemma \ref{U-diff} for $\hat{U}_j$, $\hat{W}_j$, $\hat{V}_j$ and $\hat{Z}_j$, we arrive at \eqref{Lip-cont-zeta-b} for $r=1,\dots,k$.
\end{proof}
\subsection{Local well-posedness}\label{lwps}

In this section we will prove that the pair $(u,v)$ defined by \eqref{uv-repr} is a unique local solution of the Cauchy problem \eqref{C-nonl-two-comp} in $X^{k+2}$.
Moreover, we will show that the data-to-solution map from the initial data $(m_0,n_0)$ to $(m,n)$ is Lipschitz continuous (see Theorem \ref{lwp} below).

We start with establishing the regularity properties of $u$ and $v$ defined by \eqref{uv-repr} as well as their decay rate for the large $|x|$.
\begin{proposition}[Regularity and decay of $u$ and $v$]
	Assume that $m_0(x),n_0(x)\in X^{k}$,
	$k\in\mathbb{N}_0$ and consider the local characteristic 
	$y(t,\xi)=\zeta(t,\xi)-\xi$ obtained in Proposition \ref{char-loc}.
	Then the functions $u(t,x)$ and $v(t,x)$, defined by \eqref{u-repr} and \eqref{v-repr} respectively, satisfy the following regularity and decay conditions (here $T>0$ is the same as in Proposition \ref{char-loc}):
	\begin{equation}\label{uv-reg}
	u,v\in
	C\left([-T,T], X^{k+2}\right)\cap
	C^1\left([-T,T], X^{k+1}\right).
	\end{equation}
\end{proposition}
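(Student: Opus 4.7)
The plan is to reduce \eqref{uv-repr} to a convolution with the kernel $K(x)=\tfrac12 e^{-|x|}$ and then apply the standard two-derivative gain of $K\ast(\cdot)$. Since $\zeta(t,\cdot)\in E_l$ with $l>0$, the map $y(t,\xi)=\xi+\zeta(t,\xi)$ is a strictly increasing $C^{k+1}$-diffeomorphism of $\R$ onto $\R$ for each $t\in[-T,T]$. The change of variables $z=y(t,\eta)$ in \eqref{u-repr} and \eqref{v-repr} gives
\begin{equation*}
u(t,x)=(K\ast m(t,\cdot))(x),\qquad v(t,x)=(K\ast n(t,\cdot))(x),
\end{equation*}
where $m(t,z)=\mu(y^{-1}(t,z))/\partial_\eta y(t,y^{-1}(t,z))$ with $\mu(\eta)=m_0(y_0(\eta))\partial_\eta y_0(\eta)$, and $n(t,z)$ is defined analogously with $n_0$ replacing $m_0$. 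I would then argue in three steps: (i) $m,n\in C([-T,T],X^k)$; (ii) $u,v\in C([-T,T],X^{k+2})$ by convolution estimates; (iii) $\partial_t u,\partial_t v\in C([-T,T],X^{k+1})$ by differentiating the representations in $t$.

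For (i), the $L^1$-norm is preserved exactly, $\|m(t,\cdot)\|_{L^1}=\|m_0\|_{L^1}$, by undoing the two changes of variables. The remaining $C$- and $W^{k,1}$-bounds up to order $k$ follow from repeatedly applying the chain and quotient rules to the explicit formula for $m$, using the inputs $m_0,n_0\in X^k$, $y_0-\xi\in X^{k+1}$, $\zeta\in C^1([-T,T],X^{k+1})$ from Proposition \ref{char-loc}, and $\partial_\eta y\geq l$, which ensures $y^{-1}(t,\cdot)-z\in C([-T,T],X^{k+1})$; since $X^k$ is a Banach algebra, all products stay in $X^k$, and continuity in $t$ is inherited from that of $\zeta$. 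Step (ii) is then immediate: from $(1-\partial_x^2)K=\delta$ and $K,\partial_x K\in L^1\cap L^\infty$, one proves by induction that $\partial_x^{j+2}(K\ast g)=\partial_x^j(K\ast g)-\partial_x^j g$ for $j=0,\dots,k$, and hence $\|K\ast g\|_{X^{k+2}}\leq C\|g\|_{X^k}$, whence $u,v\in C([-T,T],X^{k+2})$.

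For (iii), I differentiate \eqref{u-repr} under the integral sign—justified by dominated convergence via the bound $|\partial_t y|=|(W-U)(Z+V)|\leq\|m_0\|_{L^1}\|n_0\|_{L^1}$ from \eqref{UZ-L1}—to get
\begin{equation*}
\partial_t u(t,x)=\tfrac12\int_\R \sgn{x-y(t,\eta)}e^{-|x-y(t,\eta)|}\partial_t y(t,\eta)\, m_0(y_0(\eta))\partial_\eta y_0(\eta)\,d\eta.
\end{equation*}
Inserting $\partial_t y=(W-U)(Z+V)$ from \eqref{y-iv}, performing the change of variables $z=y(t,\eta)$, and using $\sgn{x}e^{-|x|}=-2\partial_x K(x)$, this reduces to $\partial_t u=\partial_x(K\ast f)$ with $f=m(u-\partial_x u)(v+\partial_x v)\in C([-T,T],X^k)$ (by the Banach-algebra property and step (ii)); step (ii) then yields $\partial_t u\in C([-T,T],X^{k+1})$, and the argument for $\partial_t v$ is identical. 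I expect the main obstacle to lie in step (i), specifically propagating the mixed $C^k\cap W^{k,1}$-regularity through the composition $m_0\circ y_0\circ y^{-1}$ and the division by $\partial_\eta y$: the subtle point is that neither $\partial_\eta y_0$ nor $1/\partial_\eta y$ is integrable, so each use of the $X^k$-algebra property must be preceded by splitting off the constant part, e.g.\ writing $\partial_\eta y_0=1+\partial_\eta(y_0-\xi)$ and $1/\partial_\eta y=1-\partial_\eta\zeta/\partial_\eta y$, so that every $W^{k,1}$-norm is taken of a genuinely integrable residual.
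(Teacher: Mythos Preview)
Your approach is correct and genuinely different from the paper's. The paper never passes back to Eulerian coordinates; it works entirely with the split integrals
\[
\tilde J_1(t,x;m_0)=\int_{-\infty}^{[y(t)]^{-1}(x)} e^{y(t,\xi)-x}m_0(y_0(\xi))\pxi y_0(\xi)\,d\xi,
\qquad
\tilde J_2=\int_{[y(t)]^{-1}(x)}^\infty\cdots,
\]
proves $\tilde J_j\in C([-T,T],C^{k+1}\cap W^{k+1,1})$ by direct $I_5$--$I_8$ type estimates (mean value theorem, Fubini), and bootstraps via the explicit formula $\px\tilde J_1=\frac{m_0(y_0(\xi))\pxi y_0(\xi)}{\pxi y(t,\xi)}\big|_{\xi=[y(t)]^{-1}(x)}-\tilde J_1$. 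That quotient is exactly your $m(t,x)$, so the paper implicitly establishes your step~(i) piecemeal, without naming it. For $\partial_t u$ the paper introduces analogous integrals $\check J_j$ with an extra $\partial_t y$ factor and repeats the argument, whereas you identify $\partial_t u=\partial_x(K\ast f)$ directly and recycle step~(ii); your version is shorter and, as a bonus, essentially proves Proposition~\ref{loc-sol} at the same time.

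What each route buys: your convolution reduction is more structural and avoids the somewhat lengthy $I_5$--$I_8$ computations, at the price of having to push $X^k$-regularity and $C([-T,T],\cdot)$-continuity through the composition $(\cdot)\circ y^{-1}(t,\cdot)$ and the quotient by $\pxi y$; this is exactly the obstacle you flag, and your splitting $\pxi y_0=1+\pxi(y_0-\xi)$, $1/\pxi y=1-\pxi\zeta/\pxi y$ is the right fix. The paper's approach sidesteps most of the composition-with-inverse bookkeeping (only $[y(t)]^{-1}(x)$ appears as a limit of integration, never inside a composed function until the boundary term), which makes the $t$-continuity more transparent but the overall argument longer. Both are complete once the details are filled in; the only point in your sketch that deserves a line of its own is the claim that $t\mapsto y^{-1}(t,\cdot)-\mathrm{id}$ is continuous into $X^{k+1}$, which follows from $y^{-1}(t,z)-z=-\zeta(t,y^{-1}(t,z))$ together with $\zeta\in C([-T,T],X^{k+1})$ and the uniform lower bound $\pxi y\ge l$.
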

\begin{proof}
	Introduce the following integrals, cf.\,\,\eqref{J-def} (recall that $\pxi y(t,\xi)\geq l$ for all 
	$t,\xi\in[-T,T]\times\mathbb{R}$ for some $l>0$, which implies that $y(t,\cdot)$ is a bijection from $\mathbb{R}$ to $\mathbb{R}$):
	\begin{equation}\label{tildeJ-def}
	\begin{split}
	&\tilde{J}_1(t,x;m_0)=
	\int_{-\infty}^{[y(t)]^{-1}(x)}
	e^{y(t,\xi)-x}m_0(y_0(\xi))
	\pxi y_0(\xi)\,d\xi,\\
	&\tilde{J}_2(t,x;m_0)=
	\int_{[y(t)]^{-1}(x)}^{\infty}
	e^{x-y(t,\xi)}m_0(y_0(\xi))
	\pxi y_0(\xi)\,d\xi,
	\end{split}
	\end{equation}
	where $\xi=[y(t)]^{-1}(x)$ is such that $y(t,\xi)=x$.
	In these notations we have (see \eqref{uv-repr})
	\begin{equation}\label{Ju}
	u(t,x)=\frac{1}{2}
	\left(\tilde{J}_1+\tilde{J}_2\right)
	(t,x;m_0),\quad
	v(t,x)=\frac{1}{2}
	\left(\tilde{J}_1+\tilde{J}_2\right)
	(t,x;n_0).
	\end{equation}
	Moreover, since
	\begin{equation}\label{tildeJ-x}
	\begin{split}
	&\px\tilde{J}_1(t,x;m_0)=
	\left.\frac{m_0(y_0(\xi))\pxi y_0(\xi)}{\pxi y(t,\xi)}\right|_{\xi=[y(t)]^{-1}(x)}
	-\tilde{J}_1(t,x;m_0),\\
	&\px\tilde{J}_2(t,x;m_0)=
	-\left.\frac{m_0(y_0(\xi))\pxi y_0(\xi)}{\pxi y(t,\xi)}\right|_{\xi=[y(t)]^{-1}(x)}
	+\tilde{J}_2(t,x;m_0),
	\end{split}
	\end{equation}
	we conclude that
	\begin{equation}\label{Ju_x}
	\px u(t,x)=\frac{1}{2}
	\left(\tilde{J}_2-\tilde{J}_1\right)
	(t,x;m_0),\quad
	\px v(t,x)=\frac{1}{2}
	\left(\tilde{J}_2-\tilde{J}_1\right)
	(t,x;n_0).
	\end{equation}
	
	Let us show that 
	$\tilde{J}_j(t,x;m_0)$ and $\tilde{J}_j(t,x;n_0)$, $j=1,2$,
	belong to the space 
	$C\left([-T,T],C^{k+1}(\mathbb{R})\right)$.
	We give a detailed proof for $\tilde{J}_1(t,x;m_0)$, the 
	other integrals can be treated similarly.
	Observing that 
	$|\tilde{J}_1(t,x;m_0)|\leq\|m_0\|_{L^1}$ and
	\begin{equation*}
	\begin{split}
	\left|\tilde{J}_1(t,x_1;m_0)
	-\tilde{J}_1(t,x_2;m_0)\right|
	&\leq\|m_0\|_C\|\pxi y_0\|_C
	\left|[y(t)]^{-1}(x_1)-[y(t)]^{-1}(x_2)\right|\\
	&\quad+\|m_0\|_{L^1}|x_1-x_2|,
	\quad x_1,x_2\in\mathbb{R},
	\end{split}
	\end{equation*}
	where we have used \eqref{exp-ineq},
	we conclude that 
	$\tilde{J}_1(t,x;m_0)\in 
	L^\infty([-T,T],C(\mathbb{R}))$.
	
	Then take any $t_1,t_2\in[-T,T]$ and let 
	$\xi_j=\xi_j(x)=[y(t_j)]^{-1}(x)$, $j=1,2$.
	Since $y(t_1,\xi_1)=y(t_2,\xi_2)=x$ and using the 
	mean value theorem, we obtain
	\begin{equation}\label{xi-diff}
	\xi_1-\xi_2=\frac
	{y(t_1,\xi_1)-y(t_2,\xi_2)}
	{\pxi y(t_1,\xi^*)}
	=\frac
	{\pt y(t^*,\xi_2)(t_2-t_1)}
	{\pxi y(t_1,\xi^*)},
	\end{equation}
	for certain values of $t^*$ and $\xi^*$ lying between 
	$t_1$ and $t_2$, and $\xi_1$ and $\xi_2$, respectively.
	We have the following inequality (here the arguments 
	$x,m_0$ of $\tilde{J}_1(t,x;m_0)$ are dropped):
	\begin{equation}\label{J-1-diff}
	\begin{split}
	\left|\tilde{J}_1(t_1)-\tilde{J}_1(t_2)\right|
	&\leq\left|
	\int_{\xi_1}^{\xi_2}
	e^{y(t_1,\xi)-x}m_0(y_0(\xi))
	\pxi y_0(\xi)\,d\xi
	\right|\\
	&\quad+
	\int_{-\infty}^{\xi_2}
	\left|e^{y(t_1,\xi)-x}-e^{y(t_2,\xi)-x}\right|
	\left|m_0(y_0(\xi))
	\pxi y_0(\xi)\right|\,d\xi=I_5+I_6.
	\end{split}
	\end{equation}
	
	Recalling that $x=y(t_1,\xi_1)$ and $\pxi y(t,\xi)\geq l$ and applying 
	the mean value theorem, the integral $I_5$ can be estimated 
	as follows (here we denote
	$\tilde{M}_0=\|m_0\|_C\|\pxi y_0\|_C$):
	\begin{equation}\label{I-3-est}
	\begin{split}
	I_5&\leq\tilde{M}_0e^{-x}\left|
	\int_{\xi_1}^{\xi_2}
	e^{y(t_1,\xi)}\,d\xi\right|
	\leq \frac{\tilde{M}_0}{l}e^{-x}
	\left|\int_{y(t_1,\xi_1)}^{y(t_1,\xi_2)}
	e^z\,dz\right|\\
	&\leq\frac{\tilde{M}_0}{l}
	e^{-y(t_1,\xi_1)+y(t_1,\xi^*)}
	\left|\pxi y(t_1,\xi^*)\right||\xi_1-\xi_2|
	\leq\frac{\tilde{M}_0}{l}
	\|\partial_{(\cdot)} y(t_1,\cdot)\|_C
	e^{2\|\zeta(t_1,\cdot)\|_C+\xi^*-\xi_1}
	|\xi_1-\xi_2|.
	\end{split}
	\end{equation}
	Taking into account that 
	$e^{\xi^*-\xi_1}\leq e^{|\xi_1-\xi_2|}$, we have from \eqref{xi-diff} 
	and \eqref{I-3-est} that
	\begin{equation}\label{I-3-est-1}
	I_5\leq \frac{\tilde{M}_0}{l^2}
	\|\partial_{(\cdot)} y(t_1,\cdot)\|_C
	\|\pt y(t^*,\cdot)\|_C
	\exp\left\{2\|\zeta(t_1,\cdot)\|_C+
	\frac{\|\pt y(t^*,\cdot)\|_C}
	{l}|t_1-t_2|\right\}
	|t_1-t_2|.
	\end{equation}
	
	The integral $I_6$ can be estimated as follows
	(as above, we denote
	$\tilde{M}_0=\|m_0\|_C\|\pxi y_0\|_C$):
	\begin{equation}\label{I-4-est}
	\begin{split}
	I_6&\leq\tilde{M}_0e^{-x}
	\int_{-\infty}^{\xi_2(x)}
	\left|e^{y(t_1,\xi)}-e^{y(t_2,\xi)}\right|\,d\xi
	\leq\tilde{M}_0e^{-x}
	\|\pt\zeta(t^*,\cdot)\|_C
	e^{\|\zeta(t^*,\cdot)\|_C}
	\int_{-\infty}^{\xi_2(x)}e^\xi\,d\xi
	|t_1-t_2|,\\
	&\leq \|\pt\zeta(t^*,\cdot)\|_C
	e^{\|\zeta(t^*,\cdot)\|_C+\|\zeta(t_2,\cdot)\|_C}
	|t_1-t_2|,
	\end{split}
	\end{equation}
	for some $t^*$ between $t_1$ and $t_2$
	(here we have used that 
	$|[y(t_2)]^{-1}(x)-x|\leq\|\zeta(t_2,\cdot)\|_C$).
	
	Combining \eqref{J-1-diff}, \eqref{I-3-est-1} and \eqref{I-4-est} we conclude that 
	$\tilde{J}_1(t,x;m_0)\in C([-T,T],C(\mathbb{R}))$.
	Then \eqref{tildeJ-x} implies that 
	$\tilde{J}_1(t,x;m_0)\in 
	C\left([-T,T],C^1(\mathbb{R})\right)$.
	Using \eqref{zeta-reg} and successively differentiating 
	\eqref{tildeJ-x} with respect to $x$, we conclude that
	$\tilde{J}_1(t,x;m_0)\in 
	C\left([-T,T],C^{k+1}(\mathbb{R})\right)$.
	Arguing similarly, we can prove that
	$\tilde{J}_2(m_0),\tilde{J}_j(n_0)\in 
	C\left([-T,T],C^{k+1}\right)$,
	$j=1,2$, which, together with \eqref{Ju} and \eqref{Ju_x}, imply that
	$u,v\in C\left([-T,T], C^{k+2}\right)$.
	
	Now let us prove that 
	$\tilde{J}_j(m_0),\tilde{J}_j(n_0)$, $j=1,2$,
	belong to the space 
	$C\left([-T,T],
	W^{k+1,1}(\mathbb{R})\right)$.
	As above, we give a detailed proof for $\tilde{J}_1(m_0)$, all 
	other integrals can be analyzed in a similar manner.
	By Fubini's theorem,
	\begin{equation*}
	\begin{split}
	\int_{-\infty}^{\infty}\left|
	\tilde{J}_1(t,x;m_0)\right|\,dx
	&\leq
	\int_{-\infty}^{\infty}
	\int_{-\infty}^{[y(t)]^{-1}(x)}\left|
	e^{y(t,\xi)-x}m_0(y_0(\xi))\pxi y_0(\xi)
	\right|\,d\xi\,dx\\
	&=\int_{-\infty}^{\infty}
	e^{y(t,\xi)}
	|m_0(y_0(\xi))\pxi y_0(\xi)|
	\int^{\infty}_{y(t,\xi)}e^{-x}
	\,dx\,d\xi=\|m_0\|_{L^1},
	\end{split}
	\end{equation*}
	which implies that 
	$\tilde{J}_1\in
	L^\infty\left([-T,T],L^1\right)$.
	
	For any $t_1,t_2\in[-T,T]$ we have (recall the notation 
	$\xi_j=[y(t_j)]^{-1}(x)$, $j=1,2$; here we drop the arguments $x,m_0$ of 
	$\tilde{J}_1(t,x;m_0)$ for simplicity):
	\begin{equation}\label{J-diff-L1}
	\begin{split}
	\|\tilde{J}_1(t_1)-\tilde{J}_1(t_2)\|_{L^1}
	\leq&\int_{-\infty}^{\infty}
	\left|\int_{\xi_1}^{\xi_2}e^{y(t_1,\xi)-x}
	|m_0(y_0(\xi))\pxi y_0(\xi)|\,d\xi
	\right|\,dx\\
	&+\int_{-\infty}^{\infty}\int_{-\infty}^{\xi_2}
	\left|e^{y(t_1,\xi)-x}-e^{y(t_2,\xi)-x}\right|
	|m_0(y_0(\xi))\pxi y_0(\xi)|\,d\xi
	\,dx=I_7+I_8.
	\end{split}
	\end{equation}
	Applying the Fubini's theorem and the mean value theorem, we obtain
	\begin{equation}\label{I-5-est}
	\begin{split}
	I_7&=\int_{-\infty}^{\infty}
	e^{y(t_1,\xi)}|m_0(y_0(\xi))\pxi y_0(\xi)|
	\left|\int_{y(t_1,\xi)}^{y(t_2,\xi)}e^{-x}\,dx
	\right|
	\,d\xi\\
	&=
	\int_{-\infty}^{\infty}
	e^{\zeta(t_1,\xi)}
	\left|e^{-\zeta(t_1,\xi)}
	-e^{-\zeta(t_2,\xi)}\right|
	|m_0(y_0(\xi))\pxi y_0(\xi)|\,d\xi\\
	&\leq e^{\|\zeta(t_1,\cdot)\|_C
		+\|\zeta(t^*,\cdot)\|_C}
	\|\pt\zeta(t^*,\cdot)\|_C\|m_0\|_{L^1}|t_1-t_2|,
	\end{split}
	\end{equation}
	for some $t^*$ between $t_1$ and $t_2$.
	Using similar arguments for $I_8$, we arrive at the following inequality:
	\begin{equation}\label{I-6-est}
	\begin{split}
	I_8&=\int_{-\infty}^{\infty}
	\left|e^{y(t_1,\xi)}-e^{y(t_2,\xi)}\right|
	|m_0(y_0(\xi))\pxi y_0(\xi)|
	\int_{y(t_2,\xi)}^{\infty}e^{-x}\,dx
	\,d\xi\\
	&=\int_{-\infty}^{\infty}e^{\zeta(t_2,\xi)}
	\left|e^{\zeta(t_1,\xi)}-e^{\zeta(t_2,\xi)}\right|
		|m_0(y_0(\xi))\pxi y_0(\xi)|\,d\xi\\
	&\leq e^{\|\zeta(t_1,\cdot)\|_C
		+\|\zeta(t^*,\cdot)\|_C}
	\|\pt\zeta(t^*,\cdot)\|_C\|m_0\|_{L^1}|t_1-t_2|,
	\end{split}
	\end{equation}
	for some $t^*$ between $t_1$ and $t_2$.
	Combining \eqref{J-diff-L1}, \eqref{I-5-est} and
	\eqref{I-6-est}, we have that
	$\tilde{J}_1(m_0)\in
	C\left([-T,T],L^1\right)$. 
	In view of \eqref{tildeJ-x}, we eventually conclude that 
	$\tilde{J}_1(m_0)\in
	C\left([-T,T],W^{k+1,1}\right)$.
	Arguing similarly for $\tilde{J}_2(m_0)$
	and $\tilde{J}_j(n_0)$, $j=1,2$, the equations \eqref{Ju} and \eqref{Ju_x} 
	imply that $u,v\in C\left([-T,T],W^{k+2,1}\right)$.
	
	Finally, let us show that 
	$\pt u,\pt v\in C([-T,T],X^{k+1})$.
	Introducing
	\begin{equation*}
	\begin{split}
	&\check{J}_1(t,x;m_0)=
	\int_{-\infty}^{[y(t)]^{-1}(x)}
	e^{y(t,\xi)-x}
	\pt y(t,\xi)m_0(y_0(\xi))
	\pxi y_0(\xi)\,d\xi,\\
	&\check{J}_2(t,x;m_0)=
	\int_{[y(t)]^{-1}(x)}^{\infty}
	e^{x-y(t,\xi)}
	\pt y(t,\xi)m_0(y_0(\xi))
	\pxi y_0(\xi)\,d\xi,
	\end{split}
	\end{equation*}
	we obtain that (cf.\,\,\eqref{Ju_x})
	\begin{equation}\label{checkJu}
	\pt u(t,x)=\frac{1}{2}
	\left(\check{J}_1-\check{J}_2\right)
	(t,x;m_0),\quad
	\pt v(t,x)=\frac{1}{2}
	\left(\check{J}_1-\check{J}_2\right)
	(t,x;n_0).
	\end{equation}
	Condition \eqref{zeta-reg} implies that
	$\pt y(t,\xi)m_0(y_0(\xi))
	\pxi y_0(\xi)\in
	C\left([-T,T],X^{k}\right)$.
	Therefore using
	\begin{equation*}
	\begin{split}
	&\px\check{J}_1(t,x;m_0)=
	\left.\frac{\pt y(t,\xi)m_0(\xi)\pxi y_0(\xi)}
	{\pxi y(t,\xi)}\right|_{\xi=[y(t)]^{-1}(x)}
	-\check{J}_1(t,x;m_0),\\
	&\px\check{J}_2(t,x;m_0)=
	-\left.\frac{\pt y(t,\xi)m_0(\xi)\pxi y_0(\xi)}
	{\pxi y(t,\xi)}\right|_{\xi=[y(t)]^{-1}(x)}
	+\check{J}_2(t,x;m_0),
	\end{split}
	\end{equation*}
	and applying a similar line of reasoning as we did for 
	$\tilde{J}_1(m_0)$ earlier, we conclude that 
	$\check{J}_j(m_0),\check{J}_j(n_0)$ belong to 
	$C\left([-T,T],X^{k+1}\right)$.
	which, together with \eqref{checkJu}, implies that
	$\pt u,\pt v\in
	C\left([-T,T],X^{k+1}\right)$.
\end{proof}


We are now in a position to demonstrate that $u$ and $v$, as defined 
in \eqref{uv-repr} through the characteristics $y$, constitute a solution 
to the Cauchy problem \eqref{C-nonl-two-comp}.

\begin{proposition}[Local existence]\label{loc-sol}
	Suppose that $m_0(x),n_0(x)\in X^{k}$,
	$k\in\mathbb{N}_0$. Consider the local characteristics 
	$y(t,\xi)=\zeta(t,\xi)-\xi$ obtained in Proposition \ref{char-loc} and
	define the functions $u(t,x)$ and $v(t,x)$ by \eqref{u-repr} and \eqref{v-repr} 
	respectively. Then $u$ and $v$ satisfy \eqref{uv-reg} and the pair $(u,v)$ is 
	a solution of the Cauchy problem 
	 \eqref{C-nonl-two-comp}.
\end{proposition}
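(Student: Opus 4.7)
The goal is to verify that the pair $(u,v)$ defined by \eqref{uv-repr} satisfies the Cauchy problem \eqref{C-nonl-two-comp}. Since the regularity \eqref{uv-reg} has already been established, it remains to check the initial conditions and the evolution equations. My plan is threefold: verify the initial conditions by a direct change of variables; extract the conservation law \eqref{cons-uv} from the explicit formulas for $u$, $\partial_x u$, and $\partial_x^2 u$; and finally differentiate this conservation law in $t$ along the flow \eqref{y-iv} to recover $\partial_t m=\partial_x[m(u-\partial_x u)(v+\partial_x v)]$ (and analogously for $n$), from which \eqref{nonl-two-comp} will follow by convolving with $\tfrac12 e^{-|x|}=(1-\partial_x^2)^{-1}$.

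For the initial conditions, setting $t=0$ in \eqref{u-repr} and changing variables via $z=y_0(\eta)$ — admissible since $(y_0(\xi)-\xi)\in E_c$ guarantees that $y_0$ is a $C^1$-diffeomorphism of $\mathbb{R}$ — yields $u(0,x)=\tfrac12(e^{-|\cdot|}\ast m_0)(x)=u_0(x)$, and likewise $v(0,x)=v_0(x)$. For the conservation law, I would start from the representations $u=\tfrac12(\tilde J_1+\tilde J_2)(t,x;m_0)$ and $\partial_x u=\tfrac12(\tilde J_2-\tilde J_1)(t,x;m_0)$ from the preceding proposition, and differentiate once more using the boundary-term identities \eqref{tildeJ-x} to obtain
\[
\partial_x^2 u(t,x)=u(t,x)-\frac{m_0(y_0(\xi))\partial_\xi y_0(\xi)}{\partial_\xi y(t,\xi)}\Big|_{\xi=[y(t)]^{-1}(x)}.
\]
This immediately gives $m(t,y(t,\xi))\partial_\xi y(t,\xi)=m_0(y_0(\xi))\partial_\xi y_0(\xi)$, i.e.\ \eqref{cons-u}; the same computation with $n_0$ in place of $m_0$ produces \eqref{cons-v}.

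The final step is to differentiate the conservation law in $t$. Using \eqref{y-iv} in the form $\partial_t y(t,\xi)=[(\partial_x u-u)(\partial_x v+v)](t,y(t,\xi))$ and the chain-rule identity $\partial_\xi\partial_t y=\{\partial_x[(\partial_x u-u)(\partial_x v+v)](t,y)\}\,\partial_\xi y$, the relation $\partial_t[m(t,y(t,\xi))\partial_\xi y(t,\xi)]=0$ expands to
\[
\bigl\{\partial_t m+\partial_x\bigl[m(\partial_x u-u)(\partial_x v+v)\bigr]\bigr\}(t,y(t,\xi))\,\partial_\xi y(t,\xi)=0.
\]
Dividing by $\partial_\xi y\geq l>0$ and using that $y(t,\cdot):\mathbb{R}\to\mathbb{R}$ is surjective (since $y(t,\xi)-\xi$ is bounded and $\partial_\xi y$ is bounded below away from zero) yields $\partial_t m=\partial_x[m(u-\partial_x u)(v+\partial_x v)]$; the analogous derivation handles $n$. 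Convolving both sides with $\tfrac12 e^{-|x|}$ — which commutes with $\partial_t$ because $u,v\in C^1([-T,T],X^{k+1})$ by \eqref{uv-reg} — produces \eqref{nonl-two-comp}.

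The main obstacle is justifying the $t$-differentiation of the conservation law when $k=0$, because then $m,n$ only lie in $C([-T,T],X^0)$ and $\partial_t m$ need not exist classically on $\mathbb{R}$. To circumvent this, I would first perform the differentiation entirely in Lagrangian variables — where $M(t,\xi):=m(t,y(t,\xi))$ inherits $C^1$ regularity in $t$ from the explicit formula $M=m_0(y_0(\xi))\partial_\xi y_0(\xi)/\partial_\xi y(t,\xi)$ together with \eqref{zeta-reg} — and only then translate the resulting identity back to Eulerian coordinates via the bijection $y(t,\cdot)$, thereby avoiding any direct differentiation of $m$ in $t$ before composition with the flow.
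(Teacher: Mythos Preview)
For $k\geq 1$ your argument is correct and takes a somewhat different route from the paper. You first derive the pointwise equation $\partial_t m=\partial_x[m(u-\partial_x u)(v+\partial_x v)]$ by differentiating the conservation law along the flow, and then apply $(1-\partial_x^2)^{-1}$. The paper instead works directly with $\partial_t u$: it pairs against a test function $\phi\in\mathcal S(\mathbb R)$, uses the change-of-variables identity $m(t,x)=\int\delta(x-y(t,\xi))\,m_0(y_0(\xi))\partial_\xi y_0(\xi)\,d\xi$ (which is just \eqref{cons-u} rewritten), and lets the $t$-derivative fall on $y(t,\xi)$; one further change of variables then yields $(\partial_t u,\phi)=\bigl((1-\partial_x^2)^{-1}\partial_x[m(u-\partial_x u)(v+\partial_x v)],\phi\bigr)$ without ever isolating the equation for $m$. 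Your derivation of \eqref{cons-u} from \eqref{tildeJ-x} is a nice touch that the paper leaves implicit.

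Your $k=0$ workaround, however, has a genuine gap. You are right that $M(t,\xi)=m_0(y_0(\xi))\partial_\xi y_0(\xi)/\partial_\xi y(t,\xi)$ is $C^1$ in $t$, so $\partial_t(M\,\partial_\xi y)=0$ holds classically in Lagrangian variables. But ``translating the resulting identity back to Eulerian coordinates'' means computing $\partial_t m(t,x)=\partial_t\bigl[M(t,[y(t)]^{-1}(x))\bigr]$, and the chain rule for this composition requires $\partial_\xi M$, hence $\bigl(m_0\circ y_0\cdot\partial_\xi y_0\bigr)'$, which need not exist when $m_0\in X^0=C\cap L^1$. So you cannot reach a pointwise identity $\partial_t m=\partial_x[\cdots]$ to then convolve. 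What closes the $k=0$ case is precisely the weak formulation the paper uses: write $\int m(t,x)\psi(x)\,dx=\int m_0(y_0(\xi))\partial_\xi y_0(\xi)\,\psi(y(t,\xi))\,d\xi$ and differentiate in $t$, so that the derivative lands only on $\psi(y(t,\xi))$. Equivalently, you can differentiate \eqref{u-repr} in $t$ directly under the integral sign and change variables $z=y(t,\eta)$ afterwards; this produces \eqref{nonl-two-comp} without ever forming $\partial_t m$ or $\partial_x m$ pointwise, and is the fix your outline needs.
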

\begin{proof}
	Given that $y(0,x)=y_0(\xi)$, it becomes evident that 
	the functions $u$ and $v$ defined by \eqref{uv-repr} fulfill 
	the initial conditions $u(0,x)=u_0(x)$ and $v(0,x)=v_0(x)$. 
	Subsequently, we establish \eqref{nonl-two-comp} for $u$, 
	with analogous reasoning being applicable to $v$.
	Using \eqref{cons-u}, which follows from \eqref{UV}, we have
	\begin{equation}\label{m-repr}
	m(t,x)=\int_{-\infty}^{\infty}
	\delta(x-z)m(t,x)\,dz=
	\int_{-\infty}^{\infty}
	\delta(x-y(t,\xi))
	m_0(y_0(\xi))\pxi y_0(\xi)\,d\xi,
	\end{equation}
	where $\delta(x)$ is the Dirac delta function.
	Utilizing \eqref{m-repr}, we obtain for any 
	$\phi(x)\in \mathcal{S}(\mathbb{R})$,
	\begin{equation*}
	\begin{split}
	(\pt u,\phi)&
	=\pt\left((1-\px^2)^{-1}m(t,x),\phi(x)\right)=
	\pt\left(m(t,x),(1-\px^2)^{-1}\phi(x)\right)\\
	&=\pt\int_{-\infty}^{\infty}
	m_0(y_0(\xi))\pxi y_0(\xi)
	\int_{-\infty}^{\infty}
	\delta(x-y(t,\xi))(1-\px^2)^{-1}\phi(x)
	\,dx\,d\xi\\
	&=\int_{-\infty}^{\infty}
	(W-U)(Z+V)(t,\xi)m(t,y(t,\xi))\pxi y(t,\xi)\cdot
	(1-\px^2)^{-1}\px\phi(y(t,\xi))\,d\xi\\
	&=\left(
	(1-\px^2)^{-1}\px
	[m(u-\partial_xu)(v+\partial_xv)],\phi
	\right),
	\end{split}
	\end{equation*}
	which implies \eqref{nonl-two-comp} for $u$.
\end{proof}

To establish the uniqueness of the weak solution for the FORQ 
equation in $W^{2,1}(\R)$, one can typically rely on its representation 
as a first-order equation, as discussed in \cite[Section 4]{Zh16} 
and \cite[Section 4.1]{GL17}. However, the two-component 
system \eqref{two-comp} (and the nonlocal FORQ equation) 
cannot be converted into a first-order equation, 
as is evident from terms like $(u\partial_x^2z-(\partial_x^2w)v)$ in \eqref{nonl-terms}.
Therefore, in order to establish the uniqueness of \eqref{C-nonl-two-comp}, 
alternative arguments need to be employed. Specifically, by adhering to 
the Lagrangian approach, we demonstrate that any solution 
of \eqref{C-nonl-two-comp} within the class \eqref{uv-reg}, 
must take the form \eqref{uv-repr}.

\begin{lemma}\label{cl}
	Suppose that $(u,v)$ is a local solution of the Cauchy 
	problem \eqref{C-nonl-two-comp}, which satisfy \eqref{uv-reg} 
	for some $T>0$ and $k\in\mathbb{N}_0$.
	Consider $(y_0(\xi)-\xi)\in E_c$ for some $c>0$.
	Then 
	\begin{enumerate}
		\item there exists a unique solution 
		$y(t,\xi)$ of \eqref{y-iv} subject to the initial data $y_0(\xi)$, such that
		$(y(t,\xi)-\xi)\in C^1([-T,T]\times\mathbb{R})$
		and $\pxi y(t,\xi)>l$ for all
		$t,\xi\in[-T,T]\times\mathbb{R}$, where $0<l<c$, and $T>0$ 
		is sufficiently small;
		
		\item the equalities \eqref{cons-uv} hold.
	\end{enumerate}
\end{lemma}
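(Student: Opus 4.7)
The two items call for different tools. Item (1) is a direct application of Picard--Lindel\"of to the ODE \eqref{y-iv} viewed pointwise in $\xi$. Item (2) is in spirit a chain-rule computation $\frac{d}{dt}[m\circ y\cdot \pxi y]=0$, but it is precisely here that the $k=0$ case bites: $\pt m=\pt u-\px^2\pt u$ is only a distribution, so the chain rule has to be run in duality against a test function transported by the flow. This duality step is the main obstacle of the proof.

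For (1), set $G(t,x):=(\px u-u)(\px v+v)(t,x)$. The hypothesis \eqref{uv-reg} yields $G,\,\px G\in C([-T,T],C(\R))$ with bounded norms, so $G(t,\cdot)$ is uniformly Lipschitz in $x$. Picard--Lindel\"of applied pointwise in $\xi$ gives, for every $\xi\in\R$, a unique $y(\cdot,\xi)\in C^1([-T,T])$ solving $\pt y=G(t,y)$ with $y(0,\xi)=y_0(\xi)$; smooth dependence on initial data then furnishes the joint continuity of $y$, $\pxi y$ and $\pt y$ on $[-T,T]\times\R$. The variational identity
\begin{equation*}
\pxi y(t,\xi)=\pxi y_0(\xi)\,\exp\!\left(\int_0^t\px G(\tau,y(\tau,\xi))\,d\tau\right),
\end{equation*}
combined with $\pxi y_0\ge c$ (which is what $(y_0-\xi)\in E_c$ tells us), yields $\pxi y(t,\xi)\ge c\,e^{-T\|\px G\|_C}$; choosing $T$ small enough makes this exceed any prescribed $l\in(0,c)$.

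For (2), first rewrite \eqref{nonl-two-comp} as $(1-\px^2)\pt u=\px[mp]$ with $p:=(u-\px u)(v+\px v)$, and integrate by parts in $x$ and $t$ against $\Psi\in C_c^\infty((-T,T)\times\R)$ (moving both $\px^2$ and then $\pt$ onto $\Psi$) to obtain the weak identity
\begin{equation*}
\int_{-T}^{T}\!\!\int_\R m\bigl[\pt\Psi-p\,\px\Psi\bigr]\,dx\,dt=0,
\end{equation*}
which extends to all $\Psi\in C_c^1((-T,T)\times\R)$ by a density argument using that $m,\,mp\in C([-T,T],X^0)$. Now, given $\phi\in C_c^\infty(\R)$ and $\chi\in C_c^\infty((-T,T))$, test against $\Psi(t,x):=\chi(t)\tilde\phi(t,x)$ where $\tilde\phi(t,x):=\phi\bigl([y(t)]^{-1}(x)\bigr)$. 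By (1) the map $[y(t)]^{-1}$ is well-defined and $C^1$ in $(t,x)$ because $\pxi y\ge l>0$, and $\tilde\phi$ is compactly supported in $x$ uniformly in $t$, so $\Psi\in C_c^1$. A direct chain-rule calculation using $\pt y=-p(t,y)$ (which is \eqref{y-iv} rewritten via $p=-(\px u-u)(\px v+v)$) gives $\pt\tilde\phi-p\,\px\tilde\phi=0$, so the weak identity collapses to
\begin{equation*}
0=\int_{-T}^{T}\chi'(t)\,Q(t)\,dt,\qquad Q(t):=\int_\R m(t,x)\tilde\phi(t,x)\,dx=\int_\R q(t,\xi)\,\phi(\xi)\,d\xi,
\end{equation*}
where $q(t,\xi):=m(t,y(t,\xi))\pxi y(t,\xi)$ and the second equality uses the change of variables $x=y(t,\xi)$. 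Since $\chi$ is arbitrary and $Q$ is continuous in $t$, one gets $Q\equiv Q(0)$; since $\phi$ is arbitrary and $q$ is continuous in $\xi$ (because $m(t,\cdot)\in C(\R)$ and $y(t,\cdot)\in C^1(\R)$), this forces \eqref{cons-u} pointwise. The proof of \eqref{cons-v} is identical, with $n$ in place of $m$.
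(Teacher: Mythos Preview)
Your proof is correct. Item~(1) matches the paper's argument (classical ODE theory plus smooth dependence on parameters), with your explicit variational formula for $\pxi y$ making the lower bound more transparent than the paper's one-line appeal to continuity of $\pxi y$.

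For item~(2) you take a genuinely different route. The paper works at each fixed $t$: it interprets $\pt m=\px[mp]$ as an equality of functionals on $W^{1,\infty}(\R)$ (both sides being $x$-derivatives of $X^0$ functions), substitutes the test function $\phi\!\circ\![y(t)]^{-1}$, and changes variables to obtain \eqref{cons-0} in duality. You instead pass to a space--time weak formulation, deriving the scalar conservation-law identity $\iint m(\pt\Psi-p\,\px\Psi)=0$ and then testing with $\chi(t)\,\phi\!\circ\![y(t)]^{-1}(x)$, which is annihilated by the transport operator $\pt-p\,\px$. This is the standard ``test-along-characteristics'' argument for weak solutions of transport/continuity equations, and it has two advantages over the paper's proof: it avoids the somewhat delicate pointwise-in-$t$ duality with non-compactly-supported $W^{1,\infty}$ test functions, and the collapse $\pt\tilde\phi-p\,\px\tilde\phi=0$ makes the mechanism completely transparent. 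The paper's approach, on the other hand, stays closer to the fixed-time functional-analytic framework used elsewhere in the article and does not require the auxiliary density step from $C_c^\infty$ to $C_c^1$ space--time test functions.
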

\begin{proof}
	The vector field $(\px u-u)(\px v+v)$ in \eqref{y-iv} is bounded 
	in $x$ and is of class $C^1$ in $x$ and $t$.
	Therefore the classical Cauchy theorem for ODEs implies 
	that there exists a unique solution $y(t,\xi)$,
	$(y(t,\xi)-\xi)\in C^1([-T,T]\times\mathbb{R})$, of the Cauchy 
	problem for \eqref{y-iv} with the initial data $y_0(\xi)$.
	Since $\pxi y_0(\xi)\geq c$ for all $\xi\in\mathbb{R}$ 
	and $\pxi y(t,\xi)\in C([-T,T]\times\mathbb{R})$, we have item (1) of the lemma.
	
	Let us demonstrate item (2). Consider 
	$k\in\N$. We focus on the classical solution to the Cauchy 
	problem for \eqref{two-comp}. Given this, we can directly establish 
	the validity of \eqref{cons-0} and, thus, \eqref{cons-uv}.

	In the case $k=0$ we have that 
	$m(t,\cdot)$, $n(t,\cdot)\in X^{0}$ and 
	$\pt\px u(t,\cdot)$, $\pt\px v(t,\cdot)\in X^{0}$ for all fixed $t\in[-T,T]$.
	Therefore \eqref{two-comp} can be considered for 
	all fixed $t$ as an equality of functionals acting 
	on $W^{1,\infty}(\mathbb{R})$, i.e.,
	\begin{equation}\label{distr-two-comp}
	\begin{split}
	&(\partial_tm(t,x),\phi(x))=
	\left(\partial_x[m(u-\partial_xu)
	(v+\partial_xv)](t,x),\phi(x)\right),\\
	&(\partial_tn(t,x),\phi(x))=
	\left(\partial_x[n(u-\partial_xu)
	(v+\partial_xv)](t,x),\phi(x)\right),
	\end{split}
	\end{equation}
	for any $\phi\in W^{1,\infty}(\mathbb{R})$. 
	To enhance clarity in distinguishing between the various variables 
	in the functionals above, we have opted to explicitly write 
	the variable $x$, even though it would be more accurate to omit 
	$x$ or replace $x$ by ``$\cdot$".
	Moreover, since $\pxi y(t,\xi)>l$ for all 
	$t,\xi\in[-T,T]\times\mathbb{R}$, we conclude that
	$m(t,y(t,\cdot))$, $n(t,y(t,\cdot))\in 
	L^1(\mathbb{R})$ and 
	$\pt\px u(t,y(t,\cdot))$,
	$\pt\px v(t,y(t,\cdot))\in
	L^1(\mathbb{R})$. 
	Therefore we have for any $\phi\in W^{1,\infty}$
	\begin{equation}\label{m-x-y}
	\begin{split}
	&\left(
	\frac{d}{d\xi}m(t,y(t,\xi)),\phi(\xi)
	\right)=
	\left(
	\px m(t,y(t,\xi))\pxi y(t,\xi),\phi(\xi)
	\right),\\
	&\left(
	\frac{d}{d\xi}n(t,y(t,\xi)),\phi(\xi)
	\right)=
	\left(
	\px n(t,y(t,\xi))\pxi y(t,\xi),\phi(\xi)
	\right),
	\end{split}
	\end{equation}
	and
	\begin{equation}\label{m-t-y}
	\begin{split}
	&\left(
	\frac{d}{d\xi}\pt\px u(t,y(t,\xi)),\phi(\xi)
	\right)=
	\left(
	\pt m(t,y(t,\xi))\pxi y(t,\xi),\phi(\xi)
	\right),\\
	&\left(
	\frac{d}{d\xi}\pt\px v(t,y(t,\xi)),\phi(\xi)
	\right)=
	\left(
	\pt n(t,y(t,\xi))\pxi y(t,\xi),\phi(\xi)
	\right).
	\end{split}
	\end{equation}
	Thus the right hand side of \eqref{m-x-y} and \eqref{m-t-y} exist, even 
	though $\pxi y(t,\xi)$ is just a continuous function of $\xi$.
	Taking into account that $\phi([y(t)]^{-1}(x))\in W^{1,\infty}$, 
	as soon as $\phi\in W^{1,\infty}$, we can take $\phi([y(t)]^{-1}(x))$ 
	instead of $\phi(x)$ in \eqref{distr-two-comp}.
	Changing the variables $x=y(t,\xi)$, we obtain
	\begin{equation*}
	\begin{split}
	&(\partial_tm(t,y(t,\xi))
	\pxi y(t,\xi),\phi(\xi))=
	\left(\partial_x[m(u-\partial_xu)
	(v+\partial_xv)](t,y(t,\xi))
	\pxi y(t,\xi),\phi(\xi)\right),\\
	&(\partial_tn(t,y(t,\xi))
	\pxi y(t,\xi),\phi(\xi))=
	\left(\partial_x[n(u-\partial_xu)
	(v+\partial_xv)](t,y(t,\xi))
	\pxi y(t,\xi),\phi(\xi)\right),
	\end{split}
	\end{equation*}
	which immediately implies \eqref{cons-0} and thus \eqref{cons-uv}.
\end{proof}
Using Lemma \ref{cl}, we can prove uniqueness of the solution of \eqref{C-nonl-two-comp}.
\begin{proposition}[Uniqueness]\label{Uni}
	The local solution $(u,v)$ of the Cauchy problem \eqref{C-nonl-two-comp} is unique in the class \eqref{uv-reg} for any $k\in\mathbb{N}_0$.
\end{proposition}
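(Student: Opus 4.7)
The plan is to show that any local solution $(u,v)$ in the class \eqref{uv-reg} must coincide with the solution produced in Proposition \ref{loc-sol}, by identifying its Lagrangian characteristic with the unique fixed point of the operator $A$ from \eqref{Ay-def}. Let $(u,v)$ be an arbitrary local solution of \eqref{C-nonl-two-comp} satisfying \eqref{uv-reg} and fix an initial datum $y_0$ with $(y_0(\xi)-\xi)\in X^{k+1}$ and $(y_0(\xi)-\xi)\in E_c$ for some $c>0$ (for instance $y_0(\xi)=c\xi$ with $c>1$).

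First I would apply Lemma \ref{cl} to this $(u,v)$: it gives a unique $C^1$ characteristic $y(t,\xi)$ solving \eqref{y-iv} with $\pxi y(t,\xi)>l$ on $[-T,T]\times\R$, and the conservation laws \eqref{cons-uv}. Using the bijectivity of $y(t,\cdot)$, I change variables $z=y(t,\eta)$ in the identity $u(t,x)=\tfrac12\int e^{-|x-z|}m(t,z)\,dz$ and invoke \eqref{cons-u} to conclude that $u(t,x)$ is exactly given by the formula \eqref{u-repr}; arguing identically with \eqref{cons-v} yields \eqref{v-repr}. At this stage $(u,v)$ is completely determined by the characteristic $y$ through \eqref{uv-repr}.

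Next I would plug the representations \eqref{uv-repr}--\eqref{uv_x-repr} into the characteristic ODE \eqref{y-iv} and integrate in $t$ from $0$ to $t$. In the notation $\zeta(t,\xi)=y(t,\xi)-\xi$ this produces the fixed-point equation
\begin{equation*}
\zeta(t,\xi)=y_0(\xi)-\xi+\int_0^t(W-U)(Z+V)(\tau,\xi)\,d\tau,\qquad t\in[-T,T],
\end{equation*}
i.e.\ $\zeta = A(\zeta)$ with $A$ defined in \eqref{Ay-def}. Since $(y(t,\xi)-\xi)\in C([-T,T],E_l)$ for some $0<l<c$ by item (1) of Lemma \ref{cl} (shrinking $T$ if necessary so that \eqref{T} holds for the given $(m_0,n_0)$), Proposition \ref{A-contr-prop} and the Banach fixed-point theorem force $\zeta$ to coincide with the unique fixed point of $A$ already obtained in Proposition \ref{char-loc}. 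Consequently $y$ is uniquely determined by $(m_0,n_0,y_0)$, and hence so are $u$ and $v$ via \eqref{uv-repr}, giving uniqueness.

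The main obstacle is not the fixed-point step itself but the legitimacy of the conservation laws \eqref{cons-uv} when $k=0$, since in that case $m,n$ are only continuous and integrable and one cannot differentiate $m(t,y(t,\xi))\pxi y(t,\xi)$ classically; this is precisely what Lemma \ref{cl} handles by interpreting \eqref{two-comp} as an identity of functionals on $W^{1,\infty}(\R)$ and testing with $\phi([y(t)]^{-1}(\cdot))$. Once \eqref{cons-uv} is secured in this weak sense, the reduction to the contraction argument of Proposition \ref{A-contr-prop} is direct and the proof is complete.
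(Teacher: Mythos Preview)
Your proposal is correct and follows essentially the same route as the paper: invoke Lemma \ref{cl} to obtain the characteristic $y$ and the conservation laws \eqref{cons-uv}, deduce the representation \eqref{uv-repr} for $(u,v)$, and then identify $\zeta=y-\xi$ with the unique fixed point of $A$ from Proposition \ref{char-loc}/\ref{A-contr-prop}. One small slip: your illustrative choice $y_0(\xi)=c\xi$ with $c>1$ does not satisfy $(y_0(\xi)-\xi)\in X^{k+1}$ (nor even $C^1(\R)$ in the paper's normed sense) since $(c-1)\xi$ is unbounded; simply take $y_0(\xi)=\xi$ instead.
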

\begin{proof}
	Lemma \ref{cl} implies that such a solution $(u,v)$ has the 
	representation \eqref{uv-repr}. Therefore \eqref{zeta-iv} for $\zeta=y-\xi$ 
	is equivalent to the Cauchy problem for \eqref{y-iv} with 
	initial data $y(0,\xi)=y_0(\xi)$ (here $y$ is the same as in Lemma \ref{cl}).
	Since the vector field in \eqref{zeta-iv} depends on the initial data $m_0,n_0$ 
	only and, according to Proposition \ref{char-loc}, the Cauchy 
	problem \eqref{zeta-iv} has a unique solution, we conclude that the 
	characteristic obtained in Lemma \ref{cl} is the same as that 
	obtained in Proposition \ref{char-loc}.
	This implies that any solution $(u,v)$ in the considered 
	class is that obtained in Proposition \ref{loc-sol}.
\end{proof}

\begin{proposition}
	\label{Lip-cont}
	Fix any two constants $0<R_0< R$.
	Suppose that $m_{0,j}(x),n_{0,j}(x)\in X^{k}$, $j=1,2$, 
	for some $k\in\mathbb{N}_0$, are such that
	$$
	\|m_{0,j}\|_{X^{0}},
	\|n_{0,j}\|_{X^{0}}\leq R_0,\quad
	\mbox{and}\quad
	\|m_{0,j}\|_{X^{k}},
	\|n_{0,j}\|_{X^{k}}\leq R,\quad j=1,2.
	$$
	Consider the two corresponding local solutions $(u_j,v_j)(t,x)$ 
	of \eqref{C-nonl-two-comp} in the class \eqref{uv-reg}.
	Then the data-to-solution map satisfy the following conditions:
	\begin{subequations}\label{Lip-cont-sol}
	\begin{align}
	\label{Lip-cont-sol-a}
	&\|u_1-u_2\|
	_{C([-T,T],X^1)},
	\|v_1-v_2\|_{C([-T,T],X^1)}
	\leq C_0\left(\|m_{0,1}-m_{0,2}\|_{L^{1}}
	+\|n_{0,1}-n_{0,2}\|_{L^{1}}
	\right),\\
	&\label{Lip-cont-sol-b}
	\|u_1-u_2\|
	_{C([-T,T],X^{k+2})},
	\|v_1-v_2\|
	_{C([-T,T],X^{k+2})}
	\leq C\left(\|m_{0,1}-m_{0,2}\|
	_{X^k}
	+\|n_{0,1}-n_{0,2}\|
	_{X^{k}}
	\right),
	\end{align}
	\end{subequations}
	for some $C_0=C_0(T,R_0)>0$, $C=C(T,R)>0$ and 
	sufficiently small $T>0$.
\end{proposition}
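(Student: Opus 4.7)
The plan is to exploit the explicit representation \eqref{uv-repr}, which by the uniqueness result of Proposition~\ref{Uni} both solutions $(u_j,v_j)$ must satisfy with their associated characteristics $y_j=\zeta_j+\xi$. Since Proposition~\ref{Lip-cont-ch} already provides Lipschitz estimates for $\zeta_j$ in terms of $(m_{0,j},n_{0,j})$, the entire problem reduces to estimating how sensitively \eqref{uv-repr} depends on $y_j$ and on the kernel $m_{0,j}(y_0(\eta))\partial_\eta y_0(\eta)$ (and similarly for $n_{0,j}$).

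For the low-order estimate \eqref{Lip-cont-sol-a}, I would split
\begin{equation*}
u_1(t,x)-u_2(t,x)=\tfrac12\!\int\!\bigl(e^{-|x-y_1(t,\eta)|}-e^{-|x-y_2(t,\eta)|}\bigr)m_{0,1}(y_0)\partial_\eta y_0\,d\eta+\tfrac12\!\int\! e^{-|x-y_2(t,\eta)|}(m_{0,1}-m_{0,2})(y_0)\partial_\eta y_0\,d\eta,
\end{equation*}
and bound the two terms separately. The $C$-bound on the first comes from $\bigl||a|-|b|\bigr|\leq|a-b|$ together with \eqref{exp-ineq}, giving $\tfrac12\|m_{0,1}\|_{L^1}\|\zeta_1-\zeta_2\|_C$, while its $L^1$-bound uses Fubini and \eqref{exp-ineq-b}, mirroring the estimate of the integral $I_3$ in the proof of Proposition~\ref{Lip-cont-ch}. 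The second term is bounded in $C$ by $\tfrac12\|m_{0,1}-m_{0,2}\|_{L^1}$ after the substitution $z=y_0(\eta)$ (valid since $\partial_\xi y_0\geq c>0$), and in $L^1$ by Fubini using $\int e^{-|x-y_2|}dx=2$. Doing the same for $\partial_x(u_1-u_2)$ via \eqref{uv_x-repr} and for $v_1-v_2$, then invoking \eqref{Lip-cont-zeta-a} to absorb $\|\zeta_1-\zeta_2\|_{X^0}$, yields \eqref{Lip-cont-sol-a}.

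For \eqref{Lip-cont-sol-b} I would use the Helmholtz relation $\partial_x^2u_j=u_j-m_j$ and iterate, which gives, for every $j\in\{0,\ldots,k\}$, $\|\partial_x^{j+2}u\|_{X^0}\leq\|\partial_x^j u\|_{X^0}+\|\partial_x^j m\|_{X^0}$, and hence
\begin{equation*}
\|u_1-u_2\|_{X^{k+2}}\leq C_k\bigl(\|u_1-u_2\|_{X^1}+\|m_1-m_2\|_{X^k}\bigr),
\end{equation*}
and similarly for $v$ and $n$. Thus it remains to control $\|m_1-m_2\|_{X^k}$. For this, the conservation law \eqref{cons-u} yields
\begin{equation*}
m_j(t,x)=\left.\frac{m_{0,j}(y_0(\xi))\,\partial_\xi y_0(\xi)}{\partial_\xi y_j(t,\xi)}\right|_{\xi=[y_j(t)]^{-1}(x)}.
\end{equation*}
Differentiating in $x$ (using $\partial_x[y_j(t)]^{-1}(x)=1/\partial_\xi y_j$) gives, inductively in the order of derivatives, a rational expression whose numerator is a polynomial in $\{\partial_\xi^l\zeta_j,\partial_\xi^l(m_{0,j}\circ y_0\cdot\partial_\xi y_0)\}_{l\leq k}$ and whose denominator is a power of $\partial_\xi y_j\geq l>0$. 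The uniform bounds \eqref{s-zeta-c} from Proposition~\ref{char-loc} control these quantities in terms of the ball radius $R$, and subtracting the two representations, applying \eqref{ab-diff} term by term, and invoking \eqref{Lip-cont-zeta-b} for $\|\zeta_1-\zeta_2\|_{X^{r+1}}$ provides the required Lipschitz dependence on $(m_{0,j},n_{0,j})$ in $X^k$.

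The main obstacle will be carrying out the Lipschitz analysis through the inverse characteristic $[y_j(t)]^{-1}(x)$. As in \eqref{xi-diff}, the mean value theorem combined with the lower bound $\partial_\xi y_j\geq l>0$ gives $\|[y_1(t)]^{-1}-[y_2(t)]^{-1}\|_C\leq l^{-1}\|\zeta_1-\zeta_2\|_C$, with an analogous $L^1$-estimate via Fubini along the lines of the arguments yielding $I_7,I_8$ earlier. The technical challenge is the multilinear bookkeeping required to ensure that each occurrence of $\|\zeta_1-\zeta_2\|_{X^{r+1}}$ produced by iterated differentiation matches the regularity index on the right-hand side of \eqref{Lip-cont-zeta-b}, so that after collecting everything one obtains a clean Lipschitz bound in $X^k$ with a constant depending only on $T$ and $R$. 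This is routine in spirit but must be executed carefully; once it is done, \eqref{Lip-cont-sol-b} follows, and the proposition is established.
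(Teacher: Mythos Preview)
Your approach is essentially the paper's, and correct in spirit. One point needs care: for $\partial_x(u_1-u_2)$ via \eqref{uv_x-repr}, the kernel $\operatorname{sgn}(s)e^{-|s|}$ has a jump of size $2$ at $s=0$ and is \emph{not} Lipschitz, so ``doing the same'' fails as literally stated---on the set of $\eta$ where $x-y_1(t,\eta)$ and $x-y_2(t,\eta)$ have opposite signs the pointwise bound $|\operatorname{sgn}(a)e^{-|a|}-\operatorname{sgn}(b)e^{-|b|}|\le C|a-b|$ is false. The paper deals with this by splitting each integral at $\xi_j=[y_j(t)]^{-1}(x)$ (the integrals $\breve J_{i,j}$); the mismatch then appears as the integral over $[\xi_1,\xi_2]$ (their term $I_9$), which is bounded by $\|m_{0,1}\|_C\|\partial_\xi y_0\|_C\cdot|\xi_1-\xi_2|$ together with the estimate $|\xi_1-\xi_2|\le l^{-1}\|\zeta_1-\zeta_2\|_C$ that you yourself invoke later. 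With this fix your argument for \eqref{Lip-cont-sol-a} goes through.

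Your route to \eqref{Lip-cont-sol-b} via the Helmholtz relation $\partial_x^2u_j=u_j-m_j$ and the explicit formula $m_j(t,x)=\bigl.m_{0,j}(y_0(\xi))\partial_\xi y_0(\xi)/\partial_\xi y_j(t,\xi)\bigr|_{\xi=[y_j(t)]^{-1}(x)}$ is exactly what the paper's terse ``successively differentiating \eqref{Ju-j_x} using \eqref{breveJ-x}'' unpacks to, so there is no genuine difference in strategy for the higher-order part.
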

\begin{corollary}[Lipschitz continuity]
	\label{Lip-c}
	It is easy to see that \eqref{Lip-cont-sol} imply
	the following Lipschitz property for the solutions 
	$(m_j,n_j)(t,x)=(1-\px^2)(u_j,v_j)(t,x)$, $j=1,2$:
	\begin{equation*}
	\|m_1-m_2\|
	_{C([-T,T],X^{k})},
	\|n_1-n_2\|
	_{C([-T,T],X^{k})}
	\leq C\left(\|m_{0,1}-m_{0,2}\|_{X^{k}}
	+\|n_{0,1}-n_{0,2}\|_{X^{k}}
	\right).
	\end{equation*}
\end{corollary}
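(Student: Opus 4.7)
The plan is to deduce the corollary as an immediate consequence of Proposition \ref{Lip-cont} together with the fact that the operator $(1-\partial_x^2)$ is bounded from $X^{k+2}$ into $X^k$. Since by assumption $(m_j, n_j) = (1-\partial_x^2)(u_j, v_j)$, I have $m_1 - m_2 = (u_1 - u_2) - \partial_x^2(u_1 - u_2)$ and likewise for $n_1 - n_2$, so the whole task reduces to estimating these combinations in $X^k$.

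First, I would recall the definition of the norm on $X^k = C^k \cap W^{k,1}$ as laid out in \eqref{X-not}, namely $\|f\|_{X^k} = \|f\|_{C^k} + \|f\|_{W^{k,1}}$. From this it is immediate, by the triangle inequality and by the observation that the norms $\|\partial_x^i(\partial_x^2 f)\|_{C} = \|\partial_x^{i+2}f\|_C$ and $\|\partial_x^i(\partial_x^2 f)\|_{L^1} = \|\partial_x^{i+2}f\|_{L^1}$ appear as summands in $\|f\|_{X^{k+2}}$, that
\begin{equation*}
\|f - \partial_x^2 f\|_{X^k} \leq \|f\|_{X^k} + \|\partial_x^2 f\|_{X^k} \leq 2\,\|f\|_{X^{k+2}}
\end{equation*}
for every $f \in X^{k+2}$.

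Applying this inequality to $f = u_1 - u_2$ and $f = v_1 - v_2$, taking the $C([-T,T],\cdot)$-supremum, and then invoking \eqref{Lip-cont-sol-b} from Proposition \ref{Lip-cont}, I obtain
\begin{equation*}
\|m_1 - m_2\|_{C([-T,T],X^k)} \leq 2\,\|u_1 - u_2\|_{C([-T,T],X^{k+2})} \leq 2C\bigl(\|m_{0,1}-m_{0,2}\|_{X^k} + \|n_{0,1}-n_{0,2}\|_{X^k}\bigr),
\end{equation*}
and the identical bound for $\|n_1 - n_2\|_{C([-T,T],X^k)}$ using $v_1 - v_2$. Absorbing the factor $2$ into the constant yields the claimed estimate. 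There is no genuine obstacle here; the content of the corollary is purely the elementary observation that $(1-\partial_x^2)$ drops the regularity index of $X^k$ by exactly two, so no additional work beyond the triangle inequality is required once Proposition \ref{Lip-cont} is in hand.
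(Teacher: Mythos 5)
Your argument is correct and is precisely the one the paper intends: the corollary is stated with no written proof beyond "it is easy to see," and the implicit reasoning is exactly your observation that $\|f-\partial_x^2 f\|_{X^k}\leq 2\|f\|_{X^{k+2}}$ combined with \eqref{Lip-cont-sol-b}. Nothing is missing.
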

\begin{proof}
	Consider the two characteristics $\zeta_j(t,\xi)=y_j(t,\xi)-\xi$ obtained 
	in Proposition \ref{char-loc}, which correspond to $(m_{0,j}, n_{0,j})$, $j=1,2$ 
	(see \eqref{zeta-j-int} in Proposition \ref{Lip-cont-ch}).
	According to Propositions \ref{loc-sol} and \ref{Uni}, the solutions 
	$u_j,v_j$ have the representation \eqref{uv-repr}.
	Therefore, by introducing the integrals 
	(cf.\,\,\eqref{tildeJ-def})
	\begin{equation*}
	\begin{split}
	&\breve{J}_{1,j}(t,x;m_{0,j})=
	\int_{-\infty}^{\xi_j}
	e^{y_j(t,\xi)-x}m_{0,j}(y_0(\xi))
	\pxi y_0(\xi)\,d\xi,\quad j=1,2,\\
	&\breve{J}_{2,j}(t,x;m_{0,j})=
	\int_{\xi_j}^{\infty}
	e^{x-y_j(t,\xi)}m_{0,j}(y_0(\xi))
	\pxi y_0(\xi)\,d\xi,\quad j=1,2,
	\end{split}
	\end{equation*}
	where 
	\begin{equation}\label{xi-j}
	\xi_j=[y_j(t)]^{-1}(x),\quad j=1,2,
	\end{equation}
	we have (cf.\,\,\eqref{Ju})
	\begin{equation}\label{Ju-j}
	u_j(t,x)=\frac{1}{2}
	\left(\breve{J}_{1,j}+\breve{J}_{2,j}\right)
	(t,x;m_{0,j}),\quad
	v_j(t,x)=\frac{1}{2}
	\left(\breve{J}_{1,j}+\breve{J}_{2,j}\right)
	(t,x;n_{0,j}),\quad j=1,2.
	\end{equation}
	Using (cf.\,\,\eqref{tildeJ-x})
	\begin{equation}\label{breveJ-x}
	\begin{split}
	&\px\breve{J}_{1,j}(t,x;m_{0,j})=
	\frac{m_{0,j}(y_0(\xi_j))\pxi y_0(\xi_j)}
	{\pxi y_j(t,\xi_j)}
	-\breve{J}_{1,j}(t,x;m_{0,j}),\quad j=1,2,\\
	&\px\breve{J}_{2,j}(t,x;m_{0,j})=
	-\frac{m_{0,j}(y_0(\xi_j))\pxi y_0(\xi_j)}
	{\pxi y_j(t,\xi_j)}
	+\breve{J}_{2,j}(t,x;m_{0,j}),\quad j=1,2,
	\end{split}
	\end{equation}
	we conclude that (cf.\,\,\eqref{Ju_x})
	\begin{equation}\label{Ju-j_x}
	\px u_j(t,x)=\frac{1}{2}
	\left(\breve{J}_{2,j}-\breve{J}_{1,j}\right)
	(t,x;m_{0,j}),\quad
	\px v_j(t,x)=\frac{1}{2}
	\left(\breve{J}_{2,j}-\breve{J}_{1,j}\right)
	(t,x;n_{0,j}),\quad j=1,2.
	\end{equation}
	
	First, let us show \eqref{Lip-cont-sol-a} for
	$C\left([-T,T],C^1\right)$.
	Observe that (we drop the arguments $t,x$ of $\breve{J}_{1,j}(t,x;m_{0,j})$ for simplicity)
	\begin{equation}\label{breveJ-diff-1}
	\begin{split}
	&\left|\breve{J}_{1,1}(m_{0,1})-\breve{J}_{1,2}(m_{0,2})\right|
	\leq\left|\left(
	\int_{-\infty}^{\xi_1}-\int_{-\infty}^{\xi_2}\right)
	e^{y_1(t,\xi)-x}m_{0,1}(y_0(\xi))\pxi y_0(\xi)\,d\xi
	\right|\\
	&\qquad\qquad+\left|\int_{-\infty}^{\xi_2}
	\left(e^{y_1(t,\xi)-x}m_{0,1}(y_0(\xi))-
	e^{y_2(t,\xi)-x}m_{0,2}(y_0(\xi))\right)\pxi y_0(\xi)
	\,d\xi\right|
	=I_9+I_{10}.
	\end{split}
	\end{equation}
	Taking into account that (recall the definition of $\xi_j$ given in \eqref{xi-j})
	\begin{align*}
	&l|\xi_1-\xi_2|\leq |y_1(t,\xi_1)-y_2(t,\xi_1)|
	=|\zeta_1(t,\xi_1)-\zeta_2(t,\xi_1)|,
	\end{align*}
	we have for $I_9$:
	\begin{equation}\label{breve-1}
	\begin{split}
	I_9&\leq\|m_{0,1}\|_C\|\pxi y_0\|_Ce^{-x}
	e^{\|\zeta_1(t,\cdot)\|_C}
	\left|
	\int_{\xi_1}^{\xi_2}e^{\xi}
	\right|\leq
	\|m_{0,1}\|_C\|\pxi y_0\|_C
	e^{\max\{\xi_1,\xi_2\}-x}
	|\xi_1-\xi_2|\\
	&\leq \frac{\|m_{0,1}\|_C}{l}\|\pxi y_0\|_C
	e^{\max\{\|\zeta_1(t,\cdot)\|_C,
		\|\zeta_2(t,\cdot)\|_C\}}
	\|\zeta_1(t,\cdot)-\zeta_2(t,\cdot)\|_C,
	\end{split}
	\end{equation}
	where we have used that 
	$\left|[y_j(t)]^{-1}(x)-x\right|
	\leq\|\zeta_j(t,\cdot)\|_C$, $j=1,2$.
	The integral $I_{10}$ can be estimated as follows:
	\begin{equation}\label{breve-2}
	\begin{split}
	I_{10}&\leq\left|\int_{-\infty}^{\xi_2}
	\left(e^{y_1(t,\xi)-x}-e^{y_2(t,\xi)-x}\right)
	m_{0,1}(y_0(\xi))\pxi y_0(\xi)\,d\xi\right|\\
	&\quad+\left|\int_{-\infty}^{\xi_2}
	e^{y_2(t,\xi)-x}(m_{0,1}-m_{0,2})(y_0(\xi))\pxi y_0(\xi)
	\,d\xi\right|=I_{10,1}+I_{10,2}.
	\end{split}
	\end{equation}
	Here
	\begin{equation*}
	I_{10,2}\leq\|\pxi y_0\|_C\|m_{0,1}-m_{0,2}\|_{L^1},
	\end{equation*}
	and (recall \eqref{exp-ineq-b} and that $|\xi_2-x|=\left|[y_2(t)]^{-1}(x)-x\right|
	\leq\|\zeta_2(t,\cdot)\|_C$)
	\begin{equation*}
	\begin{split}
	I_{10,1}&\leq \int_{-\infty}^{\xi_2}e^{\xi-x}
	\left|e^{\zeta_1(t,\xi)}-e^{\zeta_2(t,\xi)}\right|
	|m_{0,1}(y_0(\xi))\pxi y_0(\xi)|\,d\xi\\
	&\leq e^{\xi_2-x}\|m_{0,1}\|_{L^1}
	e^{\max\{\|\zeta_1(t,\cdot)\|_C,\|\zeta_2(t,\cdot)\|_C\}}
	\|\zeta_1(t,\cdot)-\zeta_2(t,\cdot)\|_{C}\\
	&\leq \|m_{0,1}\|_{L^1}
	e^{2\max\{\|\zeta_1(t,\cdot)\|_C,\|\zeta_2(t,\cdot)\|_C\}}
	\|\zeta_1(t,\cdot)-\zeta_2(t,\cdot)\|_{C}.
	\end{split}
	\end{equation*}
	Combining \eqref{breveJ-diff-1}, \eqref{breve-1}, \eqref{breve-2}, \eqref{s-zeta-a} and \eqref{Lip-cont-zeta-a}, we obtain
	\begin{equation*}
	\left|\breve{J}_{1,1}(m_{0,1})-\breve{J}_{1,2}(m_{0,2})\right|
	\leq C_0\left(\|m_{0,1}-m_{0,2}\|_{L^1}
	+\|n_{0,1}-n_{0,2}\|_{L^1}\right),
	\quad C_0=C_0(T,R_0)>0,
	\end{equation*}
	with $T>0$ sufficiently small.
	Arguing similarly for 
	$\breve{J}_{2,1}(m_{0,1})-\breve{J}_{2,2}(m_{0,2})$ and
	$\breve{J}_{i,1}(n_{0,1})-\breve{J}_{i,2}(n_{0,2})$, $i=1,2$, we eventually arrive at 
	\begin{equation}\label{breveJ-diff}
	\begin{split}
	&\left|\breve{J}_{i,1}(t,x;m_{0,1})
	-\breve{J}_{i,2}(t,x;m_{0,2})\right|
	\leq C_0(\|m_{0,1}-m_{0,2}\|_{L^1}+\|n_{0,1}-n_{0,2}\|_{L^1}),
	\quad i=1,2,\\
	&\left|\breve{J}_{i,1}(t,x;n_{0,1})
	-\breve{J}_{i,2}(t,x;n_{0,2})\right|
	\leq C_0(\|m_{0,1}-m_{0,2}\|_{L^1}+\|n_{0,1}-n_{0,2}\|_{L^1})
	,\quad\,\,\, i=1,2,
	\end{split}
	\end{equation}
	for all $t,x\in[-T,T]\times\mathbb{R}$ and for some $C_0=C_0(T,R_0)>0$.
	Combining 
	\eqref{Ju-j}, \eqref{Ju-j_x} and \eqref{breveJ-diff}, we obtain \eqref{Lip-cont-sol-a} for 
	$C\left([-T,T],C^1\right)$.
	
	To prove \eqref{Lip-cont-sol-a} for
	$C\left([-T,T],W^{1,1}\right)$, 
	we observe that (cf.\,\,\eqref{breveJ-diff-1})
	\begin{equation}\label{breveJ-diff-2}
	\begin{split}
	&\left\|\breve{J}_{1,1}(m_{0,1})-\breve{J}_{1,2}(m_{0,2})\right\|_{L^1}
	\leq\int_{\infty}^{\infty}\left|\left(
	\int_{-\infty}^{\xi_1}-\int_{-\infty}^{\xi_2}\right)
	e^{y_1(t,\xi)-x}m_{0,1}(y_0(\xi))\pxi y_0(\xi)\,d\xi
	\right|dx\\
	&\qquad+\int_{\infty}^{\infty}
	\left|\int_{-\infty}^{\xi_2}
	\left(e^{y_1(t,\xi)-x}m_{0,1}(y_0(\xi))-
	e^{y_2(t,\xi)-x}m_{0,2}(y_0(\xi))\right)\pxi y_0(\xi)
	\,d\xi\right|dx
	=I_{11}+I_{12}.
	\end{split}
	\end{equation}
	Changing the order of integration, we have the following estimate for $I_{11}$:
	\begin{equation}\label{breve-3}
	\begin{split}
	I_{11}&\leq\int_{-\infty}^{\infty}
	e^{y_1(t,\xi)}|m_{0,1}(y_0(\xi))\pxi y_0(\xi)|
	\left|
	\int_{y_1(t,\xi)}^{y_2(t,\xi)}e^{-x}\,dx
	\right|d\xi\\
	&=\int_{-\infty}^{\infty}
	e^{\zeta_1(t,\xi)}
	\left|
	e^{-\zeta_1(t,\xi)}-e^{-\zeta_2(t,\xi)}
	\right|
	|m_{0,1}(y_0(\xi))\pxi y_0(\xi)|d\xi\\
	&\leq \|m_{0,1}\|_{L^1}
	e^{2\max\{\|\zeta_1(t,\cdot)\|_C,
		\|\zeta_2(t,\cdot)\|_C\}}
	\|\zeta_1(t,\cdot)-\zeta_2(t,\cdot)\|_C.
	\end{split}
	\end{equation}
	As in \eqref{breve-2}, we can split the integral $I_{12}$ as follows:
	\begin{equation}\label{breve-4}
	\begin{split}
	I_{12}&\leq\int_{-\infty}^{\infty}
	\left|\int_{-\infty}^{\xi_2}
	\left(e^{y_1(t,\xi)-x}-e^{y_2(t,\xi)-x}\right)
	m_{0,1}(y_0(\xi))\pxi y_0(\xi)\,d\xi\right|dx\\
	&\quad+\int_{-\infty}^{\infty}
	\left|\int_{-\infty}^{\xi_2}
	e^{y_2(t,\xi)-x}(m_{0,1}-m_{0,2})(y_0(\xi))\pxi y_0(\xi)
	\,d\xi\right|dx=I_{12,1}+I_{12,2}.
	\end{split}
	\end{equation}
	Here $I_{12,1}$ and $I_{12,2}$ can be estimated as follows:
	\begin{equation*}
	\begin{split}
	I_{12,1}&\leq
	\int_{-\infty}^{\infty}
	\left|e^{y_1(t,\xi)}-e^{y_2(t,\xi)}\right|
	|m_{0,1}(y_0(\xi))\pxi y_0(\xi)|
	\int_{y_2(t,\xi)}^{\infty}e^{-x}\,dx\,d\xi\\
	&\leq \|m_{0,1}\|_{L^1}
	e^{2\max\{\|\zeta_1(t,\cdot)\|_C,
		\|\zeta_2(t,\cdot)\|_C\}}
	\|\zeta_1(t,\cdot)-\zeta_2(t,\cdot)\|_C,
	\end{split}
	\end{equation*}
	and 
	\begin{equation*}
	\begin{split}
	I_{12,2}&\leq
	\int_{-\infty}^{\infty}
	e^{y_2(t,\xi)}
	|(m_{0,1}-m_{0,2})(y_0(\xi))\pxi y_0(\xi)|
	\int_{y_2(t,\xi)}^{\infty}e^{-x}\,dx\,d\xi
	\leq\|m_{0,1}-m_{0,2}\|_{L^1}.
	\end{split}
	\end{equation*}
	Combining \eqref{breveJ-diff-2}, \eqref{breve-3}, \eqref{breve-4}, 
	\eqref{s-zeta-a} and \eqref{Lip-cont-zeta-a}, we obtain
	\begin{equation*}
	\left\|\breve{J}_{1,1}(m_{0,1})-\breve{J}_{1,2}(m_{0,2})\right\|_{L^1}
	\leq C_0\left(\|m_{0,1}-m_{0,2}\|_{L^1}
	+\|n_{0,1}-n_{0,2}\|_{L^1}\right),
	\end{equation*}
	where $C_0=C_0(T,R_0)>0$.
	Arguing similarly for 
	$\breve{J}_{2,1}(m_{0,1})-\breve{J}_{2,2}(m_{0,2})$ and
	$\breve{J}_{i,1}(n_{0,1})-\breve{J}_{i,2}(n_{0,2})$, $i=1,2$, we 
	eventually arrive at 
	\begin{equation}\label{breveJ-diff-L1}
	\begin{split}
	&\left\|\breve{J}_{i,1}(t,\cdot;m_{0,1})
	-\breve{J}_{i,2}(t,\cdot;m_{0,2})\right\|_{L^1}
	\leq C_0(\|m_{0,1}-m_{0,2}\|_{L^1}+\|n_{0,1}-n_{0,2}\|_{L^1}),
	\quad i=1,2,\\
	&\left\|\breve{J}_{i,1}(t,\cdot;n_{0,1})
	-\breve{J}_{i,2}(t,\cdot;n_{0,2})\right\|_{L^1}
	\leq C_0(\|m_{0,1}-m_{0,2}\|_{L^1}+\|n_{0,1}-n_{0,2}\|_{L^1})
	,\quad\,\,\, i=1,2,
	\end{split}
	\end{equation}
	for all $t\in[-T,T]$ and some $C_0=C_0(T,R_0)>0$.
	Combining \eqref{breveJ-diff-L1} and \eqref{Ju-j}, we 
	obtain \eqref{Lip-cont-sol-a} for 
	$C\left([-T,T],W^{1,1}\right)$
	and thus we have proved \eqref{Lip-cont-sol-a}.
	
	Then successively differentiating \eqref{Ju-j_x} with 
	respect to $x$ and using \eqref{breveJ-x}
	together with \eqref{s-zeta}, \eqref{Lip-cont-zeta} and
	\eqref{breveJ-diff}, \eqref{breveJ-diff-L1}, we eventually 
	arrive at \eqref{Lip-cont-sol-b}.
\end{proof}

Finally, combining Propositions \ref{loc-sol}, \ref{Uni} and \ref{Lip-cont} we 
obtain the main result of this section about the local 
well-posedness of Cauchy problem \eqref{C-nonl-two-comp} in the class
$C\left([-T,T], X^{k+2}\right)$ with $k\in\mathbb{N}_0$:
\begin{theorem}[Local well-posedness]
	\label{lwp}
	Suppose that $u_0(x),v_0(x)\in X^{k+2}$,
	$k\in\mathbb{N}_0$ 
	(see \eqref{X-not} for the definition of $X^k$).
	Then for a sufficiently small $T>0$ there exists a unique solution $(u,v)(t,x)$
	of the Cauchy problem \eqref{C-nonl-two-comp}, which satisfies 
	\begin{equation}
	\nonumber
	u,v\in
	C\left([-T,T], X^{k+2}\right)\cap
	C^1\left([-T,T], X^{k+1}\right).
	\end{equation}
	Moreover, $u$ and $v$ can be found by \eqref{u-repr} and \eqref{v-repr} 
	respectively, where the characteristics $y(t,\xi)=\zeta(t,\xi)+\xi$ are 
	given in Proposition \ref{char-loc}.
	
	Finally, the data-to-solution map is Lipschitz continuous.
	More precisely,
	for any constant $R>0$ and two solutions $(u_j,v_j)$, $j=1,2$, with initial data $(u_{0,j},v_{0,j})$ such that
	$$
	\|m_{0,j}\|_{X^{k}},
	\|n_{0,j}\|_{X^{k}}\leq R,\quad j=1,2,
	$$
	where 
	$(m_{0,j},n_{0,j})=(1-\px^2)(u_{0,j},v_{0,j})$,
	we have 
	\begin{align}
	&\nonumber
	\|m_1-m_2\|
	_{C([-T,T],X^{k})},
	\|n_1-n_2\|_{C([-T,T],X^{k})}
	\leq C\left(\|m_{0,1}-m_{0,2}\|_{X^{k}}
	+\|n_{0,1}-n_{0,2}\|_{X^{k}}
	\right),
	\end{align}
	with 
	$(m_{j},n_{j})=(1-\px^2)(u_{j},v_{j})$ and some
	$C=C(T,R)>0$.
	In addition, the solutions $(u_j,v_j)$
	satisfy the continuity condition \eqref{Lip-cont-sol-a}.
\end{theorem}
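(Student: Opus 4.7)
The theorem is a packaging result that bundles together the propositions established earlier in this section, so the plan is mostly organizational.

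First, I would translate the assumption $u_0, v_0 \in X^{k+2}$ into $m_0 = (1-\px^2)u_0$ and $n_0 = (1-\px^2)v_0$ belonging to $X^{k}$, and then choose an initial profile for the Lagrangian flow. The simplest viable choice is $y_0(\xi) = \xi$, which yields $y_0(\xi) - \xi \equiv 0 \in X^{k+1}$ and $(y_0 - (\cdot)) \in E_c$ for every $c \in (0,1]$. Fixing, say, $c = 1$ and $l \in (0,1)$ meets the hypotheses of Proposition \ref{char-loc}, which then produces a unique characteristic $\zeta(t,\xi) \in C([-T,T], E_l) \cap C^1([-T,T], X^{k+1})$ on an interval $[-T,T]$ whose length is controlled through \eqref{T} by $\|m_0\|_{X^0}$ and $\|n_0\|_{X^0}$. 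This gives the flow $y(t,\xi) = \zeta(t,\xi) + \xi$.

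Next I would define $u, v$ by the representation formulas \eqref{u-repr} and \eqref{v-repr}. The regularity claim $u, v \in C([-T,T], X^{k+2}) \cap C^1([-T,T], X^{k+1})$ is exactly \eqref{uv-reg}, and Proposition \ref{loc-sol} then asserts that the pair $(u,v)$ solves the Cauchy problem \eqref{C-nonl-two-comp}. Uniqueness inside the class \eqref{uv-reg} follows from Proposition \ref{Uni}: its proof, based on Lemma \ref{cl}, shows that any solution in this class is transported by the flow \eqref{y-iv}, satisfies the conservation laws \eqref{cons-uv} (interpreted in a distributional sense when $k=0$), and therefore must coincide with the explicit representation \eqref{uv-repr}.

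Finally, for the Lipschitz statement, I would apply Proposition \ref{Lip-cont} to two solutions $(u_j, v_j)$ arising from data $(m_{0,j}, n_{0,j})$ bounded by $R$ in $X^k$. Since $m_1 - m_2 = (1-\px^2)(u_1 - u_2)$ and $\|(1-\px^2)w\|_{X^k} \leq \|w\|_{X^{k+2}}$, the $X^{k+2}$ estimate \eqref{Lip-cont-sol-b} immediately yields the $X^k$ Lipschitz bound for $(m, n)$ claimed in the theorem, which is precisely the content of Corollary \ref{Lip-c}. The additional continuity statement \eqref{Lip-cont-sol-a} is inherited directly from Proposition \ref{Lip-cont}. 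No substantial obstacle is expected at this final step: the essential analytical work---the contraction estimate for the flow, the regularity and decay bookkeeping for $u, v$, the weak-form argument in Lemma \ref{cl}, and the $L^1$ estimates driving Proposition \ref{Lip-cont}---has already been carried out in the preceding subsections.
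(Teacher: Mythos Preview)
Your proposal is correct and matches the paper's approach: the theorem is stated in the paper precisely as the combination of Propositions \ref{loc-sol}, \ref{Uni}, and \ref{Lip-cont} (together with Corollary \ref{Lip-c}), with no additional argument given. Your added details---the explicit choice $y_0(\xi)=\xi$ and the reduction $u_0,v_0\in X^{k+2}\Rightarrow m_0,n_0\in X^k$---are exactly the bookkeeping needed to invoke those propositions, and nothing is missing.
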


\section{Blow up criteria}\label{blc}
We can extend the local characteristics $\zeta$ obtained in 
Proposition \ref{char-loc} to a maximal 
interval $\bigl(-\tilde{T}_{\max},T_{\max}\bigr)$, where
$0<T_{\max},\tilde{T}_{\max}\leq\infty$.
This means that for any $\tilde{T},T>0$ which satisfy
$-\tilde{T}_{\max}<-\tilde{T}<0$ and $0<T<T_{\max}$,
there exists $l=l(T,\tilde{T})>0$ such that
$\zeta(t,\xi)\in C\bigl([-\tilde{T},T],E_l\bigr)$ is a unique solution 
of the Cauchy problem \eqref{zeta-iv}.
Of course, $\zeta$ also satisfies the regularity and decay 
conditions \eqref{zeta-reg} on the interval $[-\tilde{T},T]$
and thus this interval can be used in Proposition \ref{loc-sol} for obtaining 
a unique solution $(u,v)$ of \eqref{C-nonl-two-comp} in 
$C\bigl([-\tilde{T},T], X^{k+2}\bigr)$.
Moreover, in Theorem \ref{bl-up-cr} we will prove that the maximal 
time of existence of the local solution $(u,v)$ of the 
Cauchy problem for \eqref{nonl-two-comp} is precisely
$\bigl(-\tilde{T}_{\max},T_{\max}\bigr)$. 

In the following proposition we give a criterion for the nonexistence 
of the global characteristics $\zeta$ and establish its regularity 
and decay properties up to its maximal time of existence.

\begin{proposition}[Characteristics on the maximal interval]\label{max-char}
	Assume that $m_0,n_0$ and
	$y_0$ satisfy the same conditions as in Proposition \ref{char-loc}.
	Consider $\zeta(t,\xi)$ on the maximal interval 
	$\bigl(-\tilde{T}_{\max},T_{\max}\bigr)$, with
	$0<T_{\max},\tilde{T}_{\max}\leq\infty$.
	Then $T_{\max}$ and/or $\tilde{T}_{\max}$ are finite if and only if 
	\begin{equation}\label{crit-loc-char}
	\lim\limits_{
	\substack{t\to T_{\max},\,\,and/or\\
		t\to -\tilde{T}_{\max}}}
	\Bigl(\, 
	\inf\limits_{\xi\in\mathbb{R}}
	(\pxi y(t,\xi))\Bigr)=0.
	\end{equation}
	Moreover, the characteristics $\zeta(t,\xi)$ can be uniquely 
	continued up to the blow up time in such a way that it 
	satisfies the following regularity and decay properties (cf.\,\,\eqref{zeta-reg}): 
	\begin{equation}\label{cont-char}
	\zeta(t,\xi)\in C\left(\mathcal{I}, X^1\right),
	\quad
	\pt\zeta(t,\xi)\in L^\infty
	\left(\mathcal{I}, W^{1,\infty}(\mathbb{R})\cap
	W^{1,1}(\mathbb{R})
	\right),
	\end{equation}
	for any closed and bounded
	$\mathcal{I}\subset
	\overline{\bigl(-\tilde{T}_{\max}, T_{\max}\bigr)}$.

	Finally, 
	for all $\xi^\prime$ such that 
	$\pxi y\left(T_{\max},\xi^\prime\right)=0$ or
	$\pxi y\left(-\tilde{T}_{\max},\xi^\prime\right)=0$
	we have
	\begin{equation}\label{nonz-iv}
	m_0^2(y_0(\xi^\prime))+n_0^2(y_0(\xi^\prime))>0.
	\end{equation}
\end{proposition}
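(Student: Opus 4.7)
The plan is to handle the three claims successively, always reducing to the contraction machinery of Section~\ref{Loc}. I begin with the blow-up criterion \eqref{crit-loc-char}. The ``$\Leftarrow$'' direction is transparent: if $\inf_\xi \pxi y(t,\xi)\to 0$ as $t\to T_{\max}^-$, then $\zeta(t,\cdot)$ leaves every space $E_l$ with $l>0$, so the maximal existence time in this scale is forced to be finite. For ``$\Rightarrow$'' I argue by contraposition and assume $\inf_\xi \pxi y(t,\xi)\geq l_0>0$ on $[0,T_{\max})$. The uniform pointwise bounds \eqref{UZ-L1} together with the explicit formula \eqref{pdpsi-zeta} produce a uniform $C^1$ bound on $\zeta(t_0,\cdot)$ as $t_0\uparrow T_{\max}$. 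Restarting the integral equation \eqref{zeta-int} at $t=t_0$ and applying a time-translated version of Proposition~\ref{char-loc}, I obtain an existence time $T^*>0$ which depends only on $\|m_0\|_{X^0}$, $\|n_0\|_{X^0}$, $\|\pxi y_0\|_C$ and $l_0$, and is in particular bounded below independently of $t_0$. Choosing $t_0$ within $T^*$ of $T_{\max}$ produces an extension past $T_{\max}$, contradicting maximality; the argument for $\tilde{T}_{\max}$ is identical with time reversed.

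For the regularity/decay claim \eqref{cont-char}, on the interior of $\mathcal{I}$ the statement is already contained in \eqref{zeta-reg}, so the issue is to extend up to a finite endpoint, say $T_{\max}$. Using \eqref{zeta-int}, \eqref{UZ-L1} and Lemma~\ref{U-L^1} I obtain the Cauchy estimate
$$
\|\zeta(t_1,\cdot)-\zeta(t_2,\cdot)\|_{C}\leq \|m_0\|_{L^1}\|n_0\|_{L^1}\,|t_1-t_2|,
$$
together with an analogous $L^1$ bound, where the exponential factors that appear in the proof of Lemma~\ref{U-L^1} are controlled by the uniform bound on $\|\zeta\|_C$. Hence $\zeta(t,\cdot)$ has a limit in $X^0$ as $t\to T_{\max}$. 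The same conclusion for $\pxi\zeta$ in $C$ and in $L^1$ follows from the explicit formula \eqref{pdpsi-zeta}, since the integrals $\int_0^t(Z+V)\,d\tau$ and $\int_0^t(W-U)\,d\tau$ converge uniformly and in $L^1$ by the same estimates. Finally, $\pt\zeta=(W-U)(Z+V)$ is controlled in $L^\infty\cap L^1$ by \eqref{UZ-L1} and Lemma~\ref{U-L^1}, while item~(2) of Lemma~\ref{U-diff} controls $\pxi(W-U)$ and $\pxi(Z+V)$ in both $L^\infty$ and $L^1$, yielding $\pt\zeta\in L^\infty(\mathcal{I},W^{1,\infty}\cap W^{1,1})$.

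The heart of the proof is \eqref{nonz-iv}, which I establish by contraposition: assume $m_0(y_0(\xi'))=n_0(y_0(\xi'))=0$ and show $\pxi y(\cdot,\xi')$ never vanishes on $\overline{(-\tilde{T}_{\max},T_{\max})}$. Differentiating \eqref{y-iv} in $\xi$ yields the linear ODE
$$
\pt\pxi y(t,\xi')=a(t,\xi')\,\pxi y(t,\xi'),\qquad
a(t,\xi')=\px\bigl[(\px u-u)(\px v+v)\bigr](t,y(t,\xi')).
$$
Expanding the derivative and substituting $\px^2 u=u-m$, $\px^2 v=v-n$ produces the algebraic identity
$$
a(t,\xi')=n(t,y(t,\xi'))(u-\px u)(t,y(t,\xi'))-m(t,y(t,\xi'))(v+\px v)(t,y(t,\xi')).
$$
Now set $t^*=\inf\{t\in[0,T_{\max}]\colon \pxi y(t,\xi')=0\}$. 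On $[0,t^*)$ the conservation laws \eqref{cons-uv} (applied through Lemma~\ref{cl}, which covers the delicate case $k=0$) force $m(t,y(t,\xi'))=n(t,y(t,\xi'))=0$, whence $a(t,\xi')\equiv 0$ and therefore $\pxi y(t,\xi')=\pxi y_0(\xi')\geq c>0$ for all $t\in[0,t^*)$. Passing to the limit $t\uparrow t^*$ using the continuity established in the previous paragraph contradicts $\pxi y(t^*,\xi')=0$, so no such $t^*$ exists; the analogous statement for the backward interval follows symmetrically. The main obstacle is the identity for $a$ together with the justification that the pointwise conservation law holds along each individual characteristic in the lowest-regularity setting, but this is precisely what Lemma~\ref{cl} provides.
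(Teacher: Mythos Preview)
Your argument is correct, and for the blow-up criterion \eqref{crit-loc-char} and the regularity claim \eqref{cont-char} it follows the same line as the paper: both rely on the a~priori bounds coming from \eqref{UZ-L1}, \eqref{zeta-int}, and \eqref{pdpsi-zeta} to show that $\|\zeta(t,\cdot)\|_{C^1}$ stays bounded up to the maximal time, so that the only possible obstruction to continuation is $\inf_\xi\pxi y(t,\xi)\to 0$. Your restart/continuation phrasing and the paper's a~priori estimate \eqref{est-C-zeta} are two packagings of the same mechanism.

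The genuine difference is in your treatment of \eqref{nonz-iv}. You pass through the Eulerian picture: you differentiate \eqref{y-iv} to obtain the linear ODE $\pt\pxi y=a\,\pxi y$, compute $a=n(u-\px u)-m(v+\px v)$ along the characteristic, and then invoke the conservation law \eqref{cons-uv} (via Lemma~\ref{cl}) to force $a\equiv 0$ at $\xi'$. This works, but it is a detour. The paper simply reads \eqref{pdpsi-zeta}: the right-hand side already carries the factors $m_0(y_0(\xi))$ and $n_0(y_0(\xi))$ explicitly, so if both vanish at $\xi'$ one gets $\pxi\zeta(t,\xi')=\pxi y_0(\xi')-1$ for every $t$, hence $\pxi y(t,\xi')=\pxi y_0(\xi')\geq c>0$, a one-line contradiction. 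No PDE solution, no computation of $a$, and no conservation law are required. In particular, your appeal to Lemma~\ref{cl} is slightly misplaced: that lemma starts from an abstract PDE solution and deduces \eqref{cons-uv}, whereas here the characteristics are primary and the needed identity is already encoded in \eqref{pdpsi-zeta}. Your route is logically sound, but the paper's is both shorter and stays entirely within the Lagrangian framework.
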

\begin{proof}
	The times $T_{\max}$ and/or $\tilde{T}_{\max}$ are finite if and only if either
	$\|\zeta(t,\cdot)\|_{C^{1}(\mathbb{R})}$ blows up as $t\to T_{\max}$ and/or 
	$t\to-\tilde{T}_{\max}$
	or
	$\inf\limits_{\xi\in\mathbb{R}}\pxi y(t,\xi)=
	1+\inf\limits_{\xi\in\mathbb{R}}\pxi \zeta(t,\xi)$
	 converges to zero as $t\to T_{\max}$ and/or 
	 $t\to-\tilde{T}_{\max}$.
	Using \eqref{zeta-int}, \eqref{pdpsi-zeta} and \eqref{UZ-L1}, we have the following \textit{a priori} estimates for $\zeta$ (cf.\,\,\eqref{s-zeta}):
	\begin{equation}\label{est-C-zeta}
	\begin{split}
	&\|\zeta(t,\cdot)\|_{C}\leq
	\|y_0(\cdot)-(\cdot)\|_{C}
	+\max\{T_{\max},\tilde{T}_{\max}\}
	\|m_0\|_{L^{1}}
	\|n_0\|_{L^{1}},\\
	&\|\partial_{(\cdot)}\zeta(t,\cdot)\|_{C}\leq
	1+\|\pxi y_0\|_{C}\\
	&\qquad\qquad\qquad
	+\max\{T_{\max},\tilde{T}_{\max}\}
	\|\pxi y_0\|_{C}
	\left(
	\|m_0\|_{C}
	\|n_0\|_{L^{1}}
	+
	\|m_0\|_{L^1}
	\|n_0\|_{C}
	\right),
	\end{split}
	\end{equation}
	for $t\in\left(-\tilde{T}_{\max},T_{\max}\right)$. The latter estimates 
	imply that $\|\zeta(t,\cdot)\|_{C^{1}}$ cannot blow up in finite time, which implies 
	the blow up criteria \eqref{crit-loc-char}.
	
	The inequalities \eqref{est-C-zeta} also imply that the 
	characteristics $\zeta(t,\xi)$ and $\pxi\zeta(t,\xi)$ can be 
	continued up to the finite $T_{\max}$ and/or $-\tilde{T}_{\max}$ by 
	taking the limit in the variable $t$ in \eqref{zeta-int} and 
	\eqref{pdpsi-zeta} respectively. 
	Then inequalities \eqref{UZ-L1} as well as boundedness 
	of $\pxi y_0,m_0,n_0$ on the line yield that 
	$\pt\zeta,\zeta\in L^\infty(\mathcal{I},W^{1,\infty})$.
	Taking into account that $\pxi y_0,m_0,n_0\in C$ and that the functions 
	under the integral in \eqref{pdpsi-zeta} are continuous and uniformly bounded 
	with respect to $\xi$ for all fixed $t\in(-\tilde{T}_{\max},T_{\max})$
	(see Item 1 in Lemma \ref{U-diff} and \eqref{UZ-L1}), we have 
	by the dominated convergence theorem that 
	$\zeta\in L^\infty(\mathcal{I},C^{1})$.
	Finally, in the case $T_{\max}<\infty$ and
	$T_{\max}\in\mathcal{I}$, we split up the integrals in \eqref{zeta-int} 
	and \eqref{pdpsi-zeta} into a sum $\int_0^{T_{\max}-\ve}\,d\eta
	+\int_{T_{\max}-\ve}^{T_{\max}}\,d\eta$ for some $\ve>0$ and conclude that 
	$\zeta\in C(\mathcal{I},C^{1})$.
	The case $\tilde{T}_{\max}<\infty$ can be treated in a similar way.
	
	Now we prove the decay properties of $\zeta$ and $\pt\zeta$.
	Arguing in the same way as in Lemma \ref{U-L^1} with $\mathcal{I}$ 
	instead of $[-T,T]$ and $\|\pt\zeta(t^*,\cdot)\|_{L^{\infty}}$ instead of 
	$\|\pt\zeta(t^*,\cdot)\|_C$ in \eqref{I_1-in}, we conclude that
	$J_j(m_0),J_j(n_0)\in C(\mathcal{I},L^1)$
	and 
	$J_j(|m_0|),J_j(|n_0|)\in C(\mathcal{I},L^1)$.
	Taking into account that we have
	\eqref{UZ-J} for $U,V$ and
	$|W|\leq\frac{1}{2}(J_1(|m_0|)+J_2(|m_0|))$,
	$|Z|\leq\frac{1}{2}(J_1(|n_0|)+J_2(|n_0|))$ for not strictly monotone increasing $y$, 
	we conclude that 
	$U,W,V,Z\in L^\infty(\mathcal{I},L^1)$.
	Therefore \eqref{zeta-int} and \eqref{pdpsi-zeta} imply that $\zeta\in C(\mathcal{I},W^{1,1})$
	and $\pt\zeta\in L^{\infty}(\mathcal{I},W^{1,1})$.
	
	Finally, let us prove \eqref{nonz-iv}.
	Suppose that 
	$m_0(y_0(\xi^\prime))=n_0(y_0(\xi^\prime))=0$.
	Then \eqref{pdpsi-zeta} implies that
	$\pxi y(t,\xi^\prime)=\pxi y_0(\xi^\prime)$ for
	all $t\in(-\tilde{T}_{\max},T_{\max})$, which contradicts the assumption that 
	$\pxi y(T_{\max},\xi^\prime)=0$ or
	$\pxi y(-\tilde{T}_{\max},\xi^\prime)=0$.
\end{proof}
\begin{remark}\label{f-z-ch}
	Notice that since $\pxi y(t,\xi)\to 1$, $\xi\to\pm\infty$, for all $t\in\mathcal{I}$, 
	the functions $\pxi y(T_{\max},\xi)$ and/or
	$\pxi y(-\tilde{T}_{\max},\xi)$ can be zero at the finite $\xi$ only.
\end{remark}

\begin{remark}
	Observe that the regularity and decay properties \eqref{cont-char} 
	of the characteristics on the time interval which can include the blow up time, 
	are weaker than that for the local characteristics, see \eqref{zeta-reg}.
	We lose the regularity because at $t=T_{\max}$ and/or $t=-\tilde{T}_{\max}$ 
	the characteristics $y$ are, in general, not strictly monotone 
	increasing and thus $W(t,\cdot), Z(t,\cdot)\not\in C(\mathbb{R})$ and 
	$W,Z\not\in C(\mathcal{I}, L^\infty(\mathbb{R}))$ 
	(cf.\,\,\cite[Theorem 1.1, Item (i) and Lemma 3.1]{GL18}).
	\begin{proof}
	Let us show that $W(t,\cdot)$ and $Z(t,\cdot)$ defined by \eqref{WZ} 
	are, in general, discontinuous for not strictly monotone increasing $y(t,\cdot)$.
	We give a proof for $W$, the function $Z$ can be analyzed similarly.
	Suppose that $y(t,\xi)$ is strictly monotone increasing for 
	$\xi\in(-\infty,a)\cup(b,\infty)$ and it is constant for $\xi\in[a,b]$.
	Denoting 
	$\tilde{m}_0(\xi)=m_0(y_0(\xi))\peta y_0(\xi)$, we have from \eqref{WZ}
	\begin{equation*}
	\begin{split}
	&W(t,\xi)=-\frac{1}{2}\int_{-\infty}^\xi
	e^{y(t,\eta)-y(t,\xi)}\tilde{m}_0(\eta)\,d\eta
	+\frac{1}{2}\int_{\xi}^\infty
	e^{y(t,\xi)-y(t,\eta)}\tilde{m}_0(\eta)\,d\eta,
	\quad\xi<a,\\
	&W(t,\xi)=-\frac{1}{2}\int_{-\infty}^a
	e^{y(t,\eta)-y(t,\xi)}\tilde{m}_0(\eta)\,d\eta
	+\frac{1}{2}\int_{b}^\infty
	e^{y(t,\xi)-y(t,\eta)}\tilde{m}_0(\eta)\,d\eta,
	\quad a<\xi<b,
	\end{split}
	\end{equation*}
	which implies that, in general, $W(t,a-)\neq W(t,a+)$.
		
	Now we prove that $W\not\in C([t_0-\ve,t_0], L^\infty)$, for some $t_0\in\mathbb{R}$, $\ve>0$, where 
	$y(t_0,\cdot)$ is strictly monotone increasing for $\xi\in(-\infty,a)\cup(b,\infty)$ and is constant for $\xi\in[a,b]$, while $y(t,\cdot)$ is strictly monotone increasing for all $t\in[t_0-\ve,t_0)$
	(the proof for $Z$ is the same).
	For all $\xi\in(a,b)$ we have
	$$
	|W(t_0,\xi)-W(t,\xi)|=
	|I_{13}(t,t_0,\xi)+I_{14}(t,t_0,\xi)
	+I_{15}(t,t_0,\xi)|,
	$$
	where (we drop the arguments of $I_j$, $j=13,14,15$ for simplicity)
	\begin{equation}\label{I-3-I-5}
	\begin{split}
	&I_{13}=-\frac{1}{2}\int_{-\infty}^a
	\left(e^{y(t_0,\eta)-y(t_0,\xi)}
	-e^{y(t,\eta)-y(t,\xi)}
	\right)
	\tilde{m}_0(\eta)\,d\eta,\\
	&I_{14}=\frac{1}{2}\int_{b}^\infty
	\left(e^{y(t_0,\xi)-y(t_0,\eta)}
	-e^{y(t,\xi)-y(t,\eta)}
	\right)
	\tilde{m}_0(\eta)\,d\eta,\\
	&I_{15}=\frac{1}{2}\int_a^\xi
	e^{y(t,\eta)-y(t,\xi)}\tilde{m}_0(\eta)\,d\eta
	-\frac{1}{2}\int_\xi^b
	e^{y(t,\xi)-y(t,\eta)}\tilde{m}_0(\eta)\,d\eta.
	\end{split}
	\end{equation}
	Equations \eqref{I-3-I-5} imply that
	$\|I_j(t,t_0,\cdot)\|_{L^\infty[a,b]}\to0$ as
	$t\to t_0$, $j=13,14$, while 
	$\|I_{15}(t,t_0,\cdot)\|_{L^\infty[a,b]}$ has, in general, nonzero limit as $t\to t_0$.
	\end{proof}
\end{remark}
\begin{remark}
	If the characteristics $y(T_{\max},\cdot)$ and/or
	$y(-\tilde{T}_{\max},\cdot)$ are strictly monotone increasing,
	then $\zeta$ satisfies the regularity and decay properties 
	\eqref{zeta-reg} up to the blow up time.
	The proof proceeds along the same lines as demonstrated 
	in Proposition \ref{char-loc}.
\end{remark}
Now we can establish the blow up criteria for the local 
solution of the Cauchy problem \eqref{C-nonl-two-comp}.
\begin{theorem}[Blow up criteria]\label{bl-up-cr}
	Suppose that $m_0(x),n_0(x)\in X^0$.
	Consider $\zeta(t,\xi)$, obtained in Proposition \ref{max-char}, 
	on the maximal interval $\bigl(-\tilde{T}_{\max},T_{\max}\bigr)$, with
	$0<T_{\max},\tilde{T}_{\max}\leq\infty$.
	
	If $T_{\max}$ and/or $\tilde{T}_{\max}$ are finite, then we have
	\begin{equation}\label{bl-up-1}
	\lim\limits_{\substack{t\to T_{\max},\,\,and/or\\
			t\to -\tilde{T}_{\max}}}
	\left(\|m(t,\cdot)\|_{C}+\|n(t,\cdot)\|_{C}
	\right)=\infty,
	\end{equation}
	where $(m,n)(t,x)=(1-\px^2)(u,v)(t,x)$ with $(u,v)(t,x)$ being the 
	unique solution of the Cauchy problem \eqref{C-nonl-two-comp}
	in $C\left([-\tilde{T},T], X^0\right)$ for any
	$-\tilde{T}_{\max}<-\tilde{T}<0$ and
	$0<T<T_{\max}$.
	
	Moreover, the following conditions are equivalent
	\begin{enumerate}[(I)]
	\item 
	$
	\limsup\limits_{\substack{t\to T_{\max},\,\,and/or\\
			t\to -\tilde{T}_{\max}}}
	\left(\|m(t,\cdot)\|_{C}+\|n(t,\cdot)\|_{C}
	\right)=\infty;
	$
	\item 
	$
	\lim\limits_{
		\substack{t\to T_{\max},\,\,and/or\\
			t\to -\tilde{T}_{\max}}}
	\Bigl(\,
	\inf\limits_{\xi\in\mathbb{R}}
	(\pxi y(t,\xi))\Bigr)=0;
	$
	\item $\limsup\limits_{
		\substack{t\to T_{\max},\,\,and/or\\
			t\to -\tilde{T}_{\max}}}
	\Bigl(\,
	\sup\limits_{x\in\mathbb{R}}\left[
	\left(
	(\px u)n-un+(\px v)m+vm
	\right)(t,x)
	\right]\Bigr)=\infty$;
	\item $\int_{0}^{T_{\max}}
	\|m(t,\cdot)\|_C+\|n(t,\cdot)\|_C\,dt=\infty$
	and/or
	$\int_{0}^{-\tilde{T}_{\max}}
	\|m(t,\cdot)\|_C+\|n(t,\cdot)\|_C\,dt=\infty$.
	\end{enumerate}
\end{theorem}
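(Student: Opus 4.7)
The first assertion \eqref{bl-up-1} will follow by tracking the conservation law \eqref{cons-uv} along a degenerating characteristic. Assume WLOG $T_{\max}<\infty$ (the case $\tilde{T}_{\max}<\infty$ is symmetric). By Proposition~\ref{max-char} combined with Remark~\ref{f-z-ch}, there exists $\xi'\in\R$ with $\pxi y(T_{\max},\xi')=0$; then \eqref{nonz-iv} forces $m_0^2(y_0(\xi'))+n_0^2(y_0(\xi'))>0$, while $\pxi y_0(\xi')\ge c>0$. Inserting $\xi=\xi'$ into \eqref{cons-uv} gives
\begin{equation*}
\|m(t,\cdot)\|_C+\|n(t,\cdot)\|_C\ge\frac{\pxi y_0(\xi')\bigl(|m_0(y_0(\xi'))|+|n_0(y_0(\xi'))|\bigr)}{\pxi y(t,\xi')}\longrightarrow\infty
\end{equation*}
as $t\to T_{\max}^-$, which is \eqref{bl-up-1}.

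\textbf{Lagrangian ODE for $m$ and $n$.} For the equivalences the key new ingredient will be an evolution equation for $m$ and $n$ along the flow of $y$. A direct calculation from \eqref{two-comp} using $m=u-\px^2u$ and $n=v-\px^2v$ shows that
\begin{equation*}
\px\bigl[(u-\px u)(v+\px v)\bigr]=m(v+\px v)-(u-\px u)n=G,
\end{equation*}
where $G=(\px u)n-un+(\px v)m+vm$ is precisely the expression appearing in (III). Combining this identity with \eqref{y-iv}, the transport terms cancel and one obtains
\begin{equation*}
\tfrac{d}{dt}m(t,y(t,\xi))=m\,G(t,y(t,\xi)),\qquad \tfrac{d}{dt}n(t,y(t,\xi))=n\,G(t,y(t,\xi)),
\end{equation*}
hence $m(t,y(t,\xi))=m_0(y_0(\xi))\exp\bigl(\int_0^t G(\tau,y(\tau,\xi))\,d\tau\bigr)$, with the same identity for $n$. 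Moreover, \eqref{cons-uv} yields $\|m(t)\|_{L^1}=\|m_0\|_{L^1}$ and $\|n(t)\|_{L^1}=\|n_0\|_{L^1}$, so $u,v$ stay bounded in $W^{1,\infty}$, which produces $\|G(t,\cdot)\|_C\le C_\ast M(t)$ with $M(t):=\|m(t,\cdot)\|_C+\|n(t,\cdot)\|_C$ and $C_\ast=C_\ast(\|m_0\|_{L^1},\|n_0\|_{L^1})$.

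\textbf{Chain of equivalences.} (II)$\Leftrightarrow$finite $T_{\max}$ is Proposition~\ref{max-char}, and (II)$\Rightarrow$(I) is the first step above. For (I)$\Rightarrow$(II): along a sequence $t_j\to T_{\max}$ with $M(t_j)\to\infty$, one finds $x_j=y(t_j,\xi_j)$ on which $|m(t_j,x_j)|+|n(t_j,x_j)|\to\infty$, and \eqref{cons-uv} forces $\pxi y(t_j,\xi_j)\to 0$, hence $\inf_\xi\pxi y(t_j,\cdot)\to 0$. The pointwise bound $\sup_x G\le\|G\|_C\le C_\ast M$ gives (III)$\Rightarrow$(I); conversely, if $\sup_x G$ were bounded by some $K$ on $[0,T_{\max})$ with $T_{\max}<\infty$, the exponential representation of the previous paragraph would give $\|m(t)\|_C\le\|m_0\|_C e^{KT_{\max}}$, contradicting (I), and we obtain (I)$\Rightarrow$(III). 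Finally, the scalar Gronwall estimate
\begin{equation*}
M(t)\le M(0)\exp\Bigl(C_\ast\int_0^t M(\tau)\,d\tau\Bigr)
\end{equation*}
yields (I)$\Leftrightarrow$(IV): $\int_0^{T_{\max}}M\,d\tau<\infty$ would keep $M$ bounded, contradicting (I); conversely, if $M(t)\to\infty$ at the finite endpoint $T_{\max}$, the left-hand side of the Gronwall inequality diverges, which forces $\int_0^{T_{\max}}M\,d\tau=\infty$.

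\textbf{Main obstacle.} The principal technical difficulty will be justifying the pointwise Lagrangian ODEs for $m$ and $n$ in the minimal-regularity class $k=0$, where $m,n$ are merely continuous rather than $C^1$ in $x$. Following the strategy used in the proof of Lemma~\ref{cl} for the conservation law \eqref{cons-uv} itself, this requires interpreting \eqref{two-comp} as an equality of functionals on $W^{1,\infty}$ test functions, composing with the $C^1$-diffeomorphism $x=y(t,\xi)$ (well defined because $\pxi y>0$ on $[0,T_{\max})$), and reading off the resulting integrated identity pointwise in $\xi$ by varying the test function.
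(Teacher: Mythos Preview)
Your argument for \eqref{bl-up-1} is identical to the paper's. For the equivalences, your approach and the paper's differ only in which exponential representation is taken as the central identity: you derive $\tfrac{d}{dt}m(t,y(t,\xi))=mG$ and work with $m(t,y)=m_0(y_0)\exp\int_0^t G$, whereas the paper differentiates the characteristic ODE \eqref{y-iv} in $\xi$ to obtain $\pt\pxi y=-G\,\pxi y$ and works with
\[
\pxi y(t,\xi)=\pxi y_0(\xi)\exp\Bigl(-\int_0^t G(\tau,y(\tau,\xi))\,d\tau\Bigr).
\]
The two formulas are equivalent via \eqref{cons-uv}, and your chain of implications (which also invokes a Gronwall bound $M(t)\le M(0)\exp\bigl(C_\ast\int_0^t M\bigr)$ for (IV), in place of the paper's direct route (II)$\Rightarrow$(IV) through the $\pxi y$ formula) is correct.

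The one place where the paper's choice buys something is precisely your ``main obstacle''. Differentiating \eqref{y-iv} in $\xi$ requires only $u,v\in C^2_x$, which holds for every $k\ge 0$ since $u,v\in X^{k+2}\subset C^2$; hence the formula for $\pxi y$ is classical and no weak-solution machinery is needed. Your Lagrangian ODE for $m,n$ then follows immediately from this together with \eqref{cons-uv} (already justified in Lemma~\ref{cl}): differentiating $m(t,y)=m_0(y_0)\pxi y_0/\pxi y$ in $t$ gives $\tfrac{d}{dt}m(t,y)=mG$. So the obstacle you identify is genuine for the route you chose, but is bypassed by the paper's dual formulation, and you can bypass it yourself simply by deriving your exponential formula for $m$ from the one for $\pxi y$ rather than directly from \eqref{two-comp}.
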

\begin{proof}
	Taking  the characteristics $y$ on the maximal interval 
	$\bigl(-\tilde{T}_{\max},T_{\max}\bigr)$ in the representation 
	\eqref{uv-repr} (see Theorem \ref{lwp}), we obtain the local solution 
	on the interval $[-\tilde{T},T]$ with any  $-\tilde{T}_{\max}<-\tilde{T}<0$
	and $0<T<T_{\max}$.
	Suppose that $T_{\max}<\infty$.
	Remark \ref{f-z-ch} implies that there exists $\xi^\prime\in\mathbb{R}$ 
	such that $\pxi y(T_{\max},\xi^\prime)=0$.
	Since $(u,v)$ admits the representation \eqref{uv-repr}, the 
	equalities \eqref{cons-uv} hold for all fixed 
	$t\in\bigl(-\tilde{T}_{\max}, T_{\max}\bigr)$ which, together 
	with \eqref{nonz-iv}, imply that either
	$|m(t,y(t,\xi^\prime))|\to\infty$ or 
	$|n(t,y(t,\xi^\prime))|\to\infty$ as $t\to T_{\max}$.
	Arguing in the same way in the case 
	$\tilde{T}_{\max}<\infty$, we arrive at \eqref{bl-up-1}.
	
	Now let us prove that the statements (I)--(IV) are equivalent.
	We will prove that (I) $\Rightarrow$ (II) $\Rightarrow$ (III) $\Rightarrow$ (I) and
	(II) $\Rightarrow$ (IV) $\Rightarrow$ (I).
	
	(I) $\Rightarrow$ (II).
	Since the right hand side of \eqref{cons-uv} is finite for all $t\in\bigl(-\tilde{T}_{\max}, T_{\max}\bigr)$, we conclude
	that (II) holds.
	
	(II) $\Rightarrow$ (III).
	Since $\pt y(t,\xi)=
	\left(\px u-u\right)(\px v+v)(t,y(t,\xi))$, we have
	\begin{equation*}
	\pt\pxi y(t,\xi)=
	-\left(
	(\px u)n-un+(\px v)m+vm
	\right)(t,y(t,\xi))\pxi y(t,\xi),
	\end{equation*}
	for $t,\xi\in
	\bigl(-\tilde{T}_{\max}, T_{\max}\bigr)
	\times\mathbb{R}$. This implies that
	\begin{equation}\label{y-exp}
	\pxi y(t,\xi)=\pxi y_0(\xi)
	\exp\left\{-\int_0^t
	\left((\px u)n-un+(\px v)m+vm
	\right)(\tau,y(\tau,\xi))\,d\tau
	\right\},
	\end{equation}
	which, together with (II), yields (III).
	
	(III) $\Rightarrow$ (I).
	This follows from 
	$$
	 \limsup\limits_{
		\substack{t\to T_{\max},\,\,and/or\\
			t\to -\tilde{T}_{\max}}}
	\left(
	\sup\limits_{x\in\mathbb{R}}\left\{
	\left(
	|(\px u)n|+|un|+|(\px v)m|+|vm|
	\right)(t,x)
	\right\}\right)=\infty,
	$$
	and \eqref{UZ-L1}.
	
	(II) $\Rightarrow$ (IV).
	Assume that $T_{\max}<\infty$.
	Then from \eqref{y-exp} we conclude that
	\begin{equation}\label{lim-int-1}
	\lim\limits_{
		\substack{t\to T_{\max}}}
	\left(
	\sup\limits_{\xi\in\mathbb{R}}\left\{
	\int_0^t\left(
	|(\px u)n|+|un|+|(\px v)m|+|vm|
	\right)(\tau,y(\tau,\xi))\,d\tau
	\right\}\right)=\infty.
	\end{equation}
	Using \eqref{UZ-L1} we obtain form \eqref{lim-int-1}
	$$
	\max\left\{\|m_0\|_{L^1}, \|n_0\|_{L^1}\right\}
	\int_0^{T_{\max}}(\|m(t,\cdot)\|_C+\|n(t,\cdot)\|_C)\,dt
	=\infty.
	$$
	Arguing similarly in the case $\tilde{T}_{\max}<\infty$, we arrive at (IV).
	
	(IV) $\Rightarrow$ (I).
	Follows form the fact that
	$\|m(t,\cdot)\|_C$ and $\|n(t,\cdot)\|_C$ are finite for all
	$t\in\bigl(-\tilde{T}_{\max}, T_{\max}\bigr)$.
\end{proof}
\begin{remark}
	The blow up criteria established in Theorem \ref{bl-up-cr} 
	generalize \cite[Theorem 3.2]{GL18}, where similar results 
	were obtained for the Cauchy problem for the FORQ equation (where $u=v$) 
	with initial data $m_0\in X^k$, $k\in\mathbb{N}$, having compact support.
	Also notice that Item (III) in Theorem \ref{bl-up-cr} was previously 
	obtained in \cite[Theorem 4.2]{YQZ15} for $m(t,\cdot),n(t,\cdot)
	\in H^{s}(\mathbb{R})$, $s>\frac{1}{2}$
	(see also \cite[Theorem 4.2]{WY23} for the two-component system 
	with high order nonlinearity and \cite[Theorem 4.3]{GLOQ13} for the FORQ equation).
	Finally, for the solution $m(t,\cdot),n(t,\cdot)\in H^{s}(\mathbb{R})$, 
	$s>\frac{1}{2}$, it was established in \cite[Theorem 4.1]{YQZ15} (see also
	\cite[Theorem 4.1]{WY23} and \cite[Theorem 4.2]{GLOQ13}),
	that if $T_{\max}<\infty$, then
	$$
	\int_0^{T_{\max}}\left(\|m(t,\cdot)\|_C^2
	+\|n(t,\cdot)\|_C^2\right)\,dt
	=\infty.
	$$
	The latter condition is weaker than that in 
	Theorem \ref{bl-up-cr}, Item (IV) obtained for $m(t,\cdot),n(t,\cdot)\in X^0$.
\end{remark}
\begin{remark}\label{max-T}
	Theorem \ref{bl-up-cr} implies that the maximal 
	time interval $\left(-\tilde{T}_{\max},T_{\max}\right)$
	of the solution $(u,v)$ with $u_0,v_0\in X^{k+2}$, $k\in\mathbb{N}_0$,
	does not depend on the regularity index $k$ 
	(cf.\,\,\cite[Remark 4.1]{YQZ15} 
	and \cite[Remark 4.1]{GLOQ13}).
	Indeed, consider the solution 
	$(u^{\prime},v^{\prime})$ in $X^{k^{\prime}+2}$, 
	$k^{\prime}\in\mathbb{N}_0$, $k^{\prime}<k$, 
	on the maximal interval
	$\left(-\tilde{T}_{\max}^{\prime},
	T_{\max}^{\prime}\right)$ with the same initial data $u_0,v_0\in X^k$.
	Since $k^{\prime}<k$ we have that
	$\left(-\tilde{T}_{\max},T_{\max}\right)
	\subseteq
	\left(-\tilde{T}_{\max}^{\prime},
	T_{\max}^{\prime}\right)$ and, due to the uniqueness, 
	$(u,v)=(u^{\prime},v^{\prime})$ on
	$\left(-\tilde{T}_{\max},T_{\max}\right)$.
	If, for example, $T_{\max}<T_{\max}^{\prime}$, then
	$u^{\prime},v^{\prime}\in 
	C([0,T_{\max}],X^{k^{\prime}+2})$ and thus
	$\|m(t,\cdot)\|_C+\|n(t,\cdot)\|_C<\infty$ as $t\to T_{\max}$.
	Theorem \ref{bl-up-cr} implies that
	$\inf\limits_{\xi\in\mathbb{R}}\pxi y(T_{\max},\xi)>0$
	and therefore the solution $u,v$ can be continued 
	beyond $T_{\max}$ in the class $X^{k+2}$.
	Arguing similarly for
	$\tilde{T}_{\max}^{\prime}$, we conclude that
	$\tilde{T}_{\max}=\tilde{T}_{\max}^{\prime}$ and $T_{\max}=T_{\max}^{\prime}$.
\end{remark}

In conclusion of this section, we elucidate the local-in-space 
sufficient condition that precipitates the finite time blow-up of 
the solution pair $(u,v)$. This condition was initially identified in 
the context of the two-component system \eqref{two-comp}, 
accommodating initial data $m_0,n_0$ in $H^s\cap L^1$ for $s>\frac{1}{2}$, 
as demonstrated in \cite[Theorem 4.3]{YQZ15}. Subsequent 
corroborations and extensions of this result can be found 
in \cite[Theorem 5.1]{FGLQ13}, \cite[Theorem 4.2]{GL18}, \cite[Theorem 5.1]{WY23}, 
and \cite[Theorem 5.2]{GLOQ13}. We extend these findings to 
solutions in the space $X^{k}$, see \eqref{X-not}.

\begin{theorem}\cite[Theorem 4.3]{YQZ15}.
	Assume that $m_0(x),n_0(x)\in X^k$, $k\in\mathbb{N}_0$,
	$m_0(x),n_0(x)\geq0$ for all $x\in\mathbb{R}$ and
	there exists $x_0\in\mathbb{R}$ such that
	$m_0(x_0),n_0(x_0)>0$.
	Consider the corresponding solution $(u,v)(t,x)$ 
	of \eqref{C-nonl-two-comp} on the maximal interval
	$\left(-\tilde{T}_{\max},T_{\max}\right)$ and let
	\begin{equation}\label{t-j}
	t_j=\frac{-M_0+(-1)^j\sqrt{M_0^2-2L_0N_0}}{L_0N_0}
	,\quad j=1,2,
	\end{equation}
	where
	\begin{equation*}
	\begin{split}
	&M_0=-\bigl((\px u_0)n_0-u_0n_0
	+(\px v_0)m_0+v_0m_0\bigr)(x_0),\quad
	N_0=(m_0+n_0)(x_0),\\
	&L_0=\frac{3}{2}
	\left(\|m_0\|_{L^1}+\|n_0\|_{L^1}\right)^3.
	\end{split}
	\end{equation*}
	
	Then we have
	\begin{itemize}
	\item if $M_0<-\sqrt{2L_0N_0}$, the maximal existence time $T_{\max}>0$ is finite and it has the following upper bound:
	\begin{equation*}
	T_{\max}\leq t_1.
	\end{equation*}
	In the case $T_{\max}=t_1$, we have the following estimates 
	for the blow up rate:
	\begin{equation}\label{blupr}
	\|m(t,\cdot)\|_{C}+\|n(t,\cdot)\|_C\geq
	\frac{2}{t_2L_0(T_{\max}-t)},\quad
	t\in\left(0,T_{\max}\right),
	\end{equation}
	and
	\begin{equation}\label{blupy}
	\inf\limits_{\xi\in\mathbb{R}}\pxi y(t,\xi)
	\leq t_2\frac{L_0}{2}(m_0+n_0)(x_0)(T_{\max}-t),
	\quad t\in\left(0,T_{\max}\right).
	\end{equation}
	\item If $M_0>\sqrt{2L_0N_0}$, the 
	maximal existence time $-\tilde{T}_{\max}<0$ is finite and 
	it has the following lower bound:
	\begin{equation*}
	-\tilde{T}_{\max}\geq t_2.
	\end{equation*}
	In the case $\tilde{T}_{\max}=t_2$, we have the following estimates 
	for the blow up rate:
	$$
	\|m(t,\cdot)\|_{C}+\|n(t,\cdot)\|_C\geq
	\frac{2}{|t_1|L_0(t+\tilde{T}_{\max})},\quad
	t\in\bigl(-\tilde{T}_{\max},0\bigr),
	$$
	and
	$$
	\inf\limits_{\xi\in\mathbb{R}}\pxi y(t,\xi)
	\leq |t_1|\frac{L_0}{2}(m_0+n_0)(x_0)
	(t+\tilde{T}_{\max}),
	\quad t\in\bigl(-\tilde{T}_{\max},0\bigr).
	$$
	\end{itemize}
\end{theorem}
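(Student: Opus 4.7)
The strategy is a Lagrangian Riccati-type argument centred at the distinguished point $x_0$. Choose the reference characteristic $y_0(\xi)=\xi$, so that $\xi_0:=x_0$ satisfies $y_0(\xi_0)=x_0$ and $Q(0):=\pxi y(0,x_0)=1$. By the sign hypothesis $m_0,n_0\geq 0$ and the conservation identities \eqref{cons-uv}, the quantities $m(t,y(t,\xi))\pxi y(t,\xi)=m_0(\xi)$ and $n(t,y(t,\xi))\pxi y(t,\xi)=n_0(\xi)$ remain non-negative along the flow; since $\pxi y>0$ up to the maximal time (Proposition \ref{max-char}), this gives $m(t,\cdot),n(t,\cdot)\geq 0$ pointwise, together with the $L^1$-conservation $\|m(t,\cdot)\|_{L^1}=\|m_0\|_{L^1}$ and $\|n(t,\cdot)\|_{L^1}=\|n_0\|_{L^1}$ via change of variables. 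Using $w:=u-\px u=(1+\px)^{-1}m$ and $z:=v+\px v=(1-\px)^{-1}n$, together with the fact that the kernels of $(1\pm\px)^{-1}$ are non-negative, we also obtain the pointwise bounds $0\leq w\leq\|m_0\|_{L^1}$ and $0\leq z\leq\|n_0\|_{L^1}$.

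Set $\Psi(t):=n_0(x_0)w(t,y(t,x_0))-m_0(x_0)z(t,y(t,x_0))$, so that $\Psi(0)=M_0$. Substituting the conservation relations $m\cdot Q=m_0(x_0)$, $n\cdot Q=n_0(x_0)$ into \eqref{y-exp} evaluated at $\xi=x_0$ yields the crisp first-order identity $\dot Q(t)=\Psi(t)$. To compute $\ddot Q=\dot\Psi$ I would differentiate $w$ and $z$ along the characteristic using $\pt y=-wz$, the Eulerian identities $\px w=m-w$ and $\px z=z-n$, and the PDE rewritten as $u_t-u_{tx}=mwz-(1+\px)^{-1}[mwz]$ and $v_t+v_{tx}=(1-\px)^{-1}[nwz]-nwz$. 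After the cancellations this produces
\begin{equation*}
\dot w=w^{2}z-(1+\px)^{-1}[mwz]\big|_{y},\qquad \dot z=(1-\px)^{-1}[nwz]\big|_{y}-wz^{2},
\end{equation*}
and hence
\begin{equation*}
\ddot Q=\dot\Psi=wz\bigl(n_0(x_0)w+m_0(x_0)z\bigr)-n_0(x_0)(1+\px)^{-1}[mwz]\big|_{y}-m_0(x_0)(1-\px)^{-1}[nwz]\big|_{y}.
\end{equation*}

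The key estimate is $\ddot Q\leq L_0 N_0$. Since $m,n,w,z\geq 0$ and the kernels of $(1\pm\px)^{-1}$ are non-negative, the two non-local terms are themselves non-negative and can simply be dropped; the elementary bounds $wz\leq\|m_0\|_{L^1}\|n_0\|_{L^1}$ and $n_0(x_0)w+m_0(x_0)z\leq N_0(\|m_0\|_{L^1}+\|n_0\|_{L^1})$ then produce $\ddot Q\leq L_0 N_0$. Let $R(t):=1+M_0 t+\tfrac{L_0 N_0}{2}t^{2}$; then $S:=Q-R$ satisfies $S(0)=\dot S(0)=0$ and $\ddot S\leq 0$, so $S$ is concave and therefore $S\leq 0$ on both sides of the origin, giving $Q(t)\leq R(t)$ on the whole existence interval. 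When $M_0<-\sqrt{2L_0N_0}$, the quadratic $R$ has two positive real roots $0<t_1<t_2$; since $Q>0$ wherever the characteristic is defined, $Q$ must vanish by $t=t_1$, and Theorem \ref{bl-up-cr}(II) then yields $T_{\max}\leq t_1$. The case $M_0>\sqrt{2L_0N_0}$ is symmetric (two negative roots $t_1<t_2<0$) and gives $-\tilde T_{\max}\geq t_2$.

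Assuming $T_{\max}=t_1$, the blow-up rates follow by factoring $R(t)=\tfrac{L_0 N_0}{2}(t_1-t)(t_2-t)\leq\tfrac{L_0 N_0 t_2}{2}(T_{\max}-t)$ and combining with the pointwise lower bound $\|m(t,\cdot)\|_C+\|n(t,\cdot)\|_C\geq (m_0+n_0)(x_0)/Q(t)=N_0/Q(t)$ coming from conservation, which gives \eqref{blupr} at once; the estimate \eqref{blupy} is then immediate from $\inf_\xi\pxi y(t,\xi)\leq Q(t)$. The main obstacle is the derivation of the Riccati-type inequality $\ddot Q\leq L_0 N_0$: it relies both on the algebraic cancellations (via $\px w=m-w$ and $\px z=z-n$) that eliminate derivatives of $m$ and $n$ from $\dot\Psi$, and on the sign hypothesis $m_0,n_0\geq 0$, which propagates along the flow and makes the non-local terms $(1\pm\px)^{-1}[mwz]$ and $(1\pm\px)^{-1}[nwz]$ contribute to $\dot\Psi$ with a favourable sign — absent these signs, the non-local pieces would enter with unknown sign and the Riccati comparison would break down.
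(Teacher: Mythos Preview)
Your argument is correct and is essentially a re-parametrization of the paper's proof. The paper follows \cite{YQZ15}: it tracks $M(t,x)=wn-zm$ and $N(t,x)=m+n$ along the characteristic through $x_0$, establishes the pair $\dot M\leq -M^2+L_0N$ and $\dot N=-MN$, and then integrates twice (first $\frac{d}{dt}(M/N)\leq L_0$, then $\frac{d}{dt}(N^{-1})\leq M_0/N_0+L_0t$) to obtain $N^{-1}\leq\frac{L_0}{2}(t-t_1)(t-t_2)$. Since $Q(t)=N_0\,N^{-1}(t,y(t,x_0))$ and $\Psi=MQ$ by the conservation relations, your second-order inequality $\ddot Q\leq L_0N_0$ is precisely the paper's $\dot M\leq -M^2+L_0N$ multiplied by $Q$, and your concavity comparison $Q\leq R$ reproduces exactly the paper's quadratic bound on $N^{-1}$.

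The one organizational advantage of your version is that it bypasses the rather heavy Eulerian identity for $\pt M$ (the paper's equation citing \cite[Lemma~4.5]{YQZ15}) by computing $\dot w,\dot z$ directly via $\px w=m-w$, $\px z=z-n$; this makes transparent why the non-local terms enter with a favourable sign and why the constant $L_0$ appears. One small point you gloss over: the paper invokes Remark~\ref{max-T} to reduce to $k\geq 3$ so that the differentiations along the flow are classical. Your formulas for $\dot w,\dot z$ in fact make sense already for $k=0$ (they involve only $m,n,w,z$ and the bounded operators $(1\pm\px)^{-1}$), but it would be worth saying explicitly why $\pt w=\pt u-\px\pt u$ and $\pt z=\pt v+\px\pt v$ are available, which is precisely the regularity $\pt u,\pt v\in X^{k+1}$ from \eqref{uv-reg}.
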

\begin{proof}
The proof closely follows the methodology in \cite{YQZ15}, with 
minor modifications tailored to our specific context. Here, we provide 
a concise overview of the essential steps, highlighting where our approach 
diverges from that of \cite{YQZ15}. 
Taking into account Remark \ref{max-T}, we can assume that $k\geq 3$.
Let us take $y_0(\xi)=\xi$ and denote
$$
M(t,x)=-\bigl((\px u)n-un
+(\px v)m+vm\bigr)(t,x),\quad
N(t,x)=(m+n)(t,x).
$$
Direct calculations show that, cf.\,\,\cite[Lemma 4.5]{YQZ15} (here 
we drop the arguments of $M$, $u$ and $v$ for simplicity)
\begin{equation*}
\begin{split}
&\pt M(t,x)
-\left((uv-(\px u)\px v)-((\px u)v-u\px v)\right)\px M\\
&=-M^2-n(1-\px^2)^{-1}\left((u-\px u)M\right)
+m(1-\px^2)^{-1}\left((v+\px v)M\right)\\
&\quad+n\px(1-\px^2)^{-1}\left((u-\px u)M\right)
+m\px(1-\px^2)^{-1}\left((v+\px v)M\right).
\end{split}
\end{equation*}
Then arguing similarly as in \cite[Theorem 4.3, equation (4.38)]{YQZ15}, we conclude that
\begin{equation}\label{dM}
\begin{split}
\frac{d}{dt}M(t,y(t,x_0))&=\pt M(t,y(t,x_0))
+(W-U)(Z+V)(t,x_0)\px M(t,y(t,x_0))\\
&\leq -M^2(t,y(t,x_0))+L_0N(t,y(t,x_0)),
\quad t\in\left(-\tilde{T}_{\max},T_{\max}\right),
\end{split}
\end{equation}
and (see \cite[equation (4.39)]{YQZ15})
\begin{equation}\label{dN}
\frac{d}{dt}N(t,y(t,x_0))=-(MN)(t,y(t,x_0)),\quad
t\in\left(-\tilde{T}_{\max},T_{\max}\right).
\end{equation}
From the assumptions of the theorem and \eqref{cons-uv}, 
$N(t,y(t,x_0))>0$ for all $t\in\bigl(-\tilde{T}_{\max},T_{\max}\bigr)$.
Combining \eqref{dM} and \eqref{dN}, we conclude that
\begin{equation*}
	\left(
	N\frac{d}{dt}M-M\frac{d}{dt}N
	\right)(t,y(t,x_0))\leq L_0 N^2(t,y(t,x_0)),
\end{equation*}
and thus 
\begin{equation}\label{dM/N}
\frac{d}{dt}\left(
\frac{M}{N}\right)(t,y(t,x_0))\leq L_0.
\end{equation}
Integrating the latter from $0$ to $t$ with $t>0$, we obtain
\begin{equation}\label{M-ineq}
M(t,y(t,x_0))\leq\left(\frac{M_0}{N_0}
+L_0t\right)N(t,y(t,x_0)).
\end{equation}
Combining \eqref{dN} and \eqref{M-ineq}, we obtain
\begin{equation}\label{dN^-1}
\frac{d}{dt}\left(
N^{-1}\right)(t,y(t,x_0))\leq\frac{M_0}{N_0}
+L_0t,
\end{equation}
which, after integration from $0$ to $t$, $t>0$, leads to 
\begin{equation}\label{N^-1}
0<N^{-1}(t,y(t,x_0))\leq\frac{L_0}{2}(t-t_1)(t-t_2),
\end{equation}
where $t_1$ and $t_2$
 are the solutions of the quadratic equation 
$t^2+\frac{2M_0}{L_0N_0}t+\frac{2}{L_0N_0}=0$ given in \eqref{t-j}.
In view of the assumption $M_0<-\sqrt{2L_0N_0}$, we have that $0<t_1<t_2$ which, together with \eqref{N^-1}, implies that $T_{\max}\leq t_1$ and
$\|m(t,\cdot)\|_C+\|n(t,\cdot)\|_C\to\infty$ as 
$t\to T_{\max}$.
The blow up rate \eqref{blupr} follows from \eqref{N^-1} and the inequality 
$$
\|m(t,\cdot)\|_C+\|n(t,\cdot)\|_C\geq
N(t,y(t,x_0)),
$$
while the estimate \eqref{blupy} follows from \eqref{N^-1} and (see \eqref{cons-uv}; 
cf.\,\,\cite[Theorem 4.2]{GL18})
$$
\inf\limits_{\xi\in\mathbb{R}}\pxi y(t,\xi)\leq
\pxi y(t,x_0)=\frac{(m_0+n_0)(x_0)}{N(t,y(t,x_0))}.
$$

Arguing similarly for $-\tilde{T}_{\max}$, where we integrate \eqref{dM/N} and \eqref{dN^-1} from $t$ to $0$ with $t<0$, we obtain the lower bound for $-\tilde{T}_{\max}$ as well as the blow up rate.
\end{proof}

\end{document}